\newcommand{\be}{\begin{equation}}
\newcommand{\ee}{\end{equation}}
\newcommand{\bit}{\begin{itemize}}
\newcommand{\eit}{\end{itemize}}
\def\bfx{{\bf x}}
\def\bfa{{\bf a}}
\def\bfy{{\bf y}}
\def\face{{\rm face}}
\def\diag{{\rm diag}}
\def\Diag{{\rm Diag}}
\def\la{\langle}
\def\ra{\rangle}
\def\rank{{\rm rank}}
\newtheorem{proposition}{Proposition}[section]
\newtheorem{theorem}[proposition]{Theorem}
\newtheorem{lemma}[proposition]{Lemma}
\newtheorem{definition}[proposition]{Definition}
\newtheorem{remark}[proposition]{Remark}
\newtheorem{example}[proposition]{Example}
\newcommand{\mtB}{\mathcal{B}}
\newcommand{\mS}{\mathcal{S}}
\newcommand{\mK}{\mathcal{K}}
\begin{document}

\title{A Facial Reduction Approach for the Single Source Localization Problem}
\author{He Shi\thanks{School of Mathematics and Statistics, Beijing Institute of Technology, Beijing, 100081, P. R. China}
\and Qingna Li\thanks{Corresponding author. School of Mathematics and Statistics/Beijing Key Laboratory on MCAACI/Key Laboratory of Mathematical Theory and Computation in Information Security, Beijing Institute of Technology, Beijing, 100081, P. R. China. Email: qnl@bit.edu.cn. This author’s research is supported by the National Science Foundation of China (NSFC) 12071032.}}

\maketitle

\abstract{The single source localization problem (SSLP) appears in several fields such as signal processing and global
positioning systems. The optimization problem of SSLP is nonconvex {and} difficult to find {its globally optimal}
solution. It can be reformulated as a rank constrained Euclidean distance matrix (EDM) completion problem with a number of equality constraints. In this paper, we propose a facial reduction approach to solve {such an EDM} completion problem. For the
constraints of fixed distances between sensors, we reduce them to a face of the EDM cone and derive the closed
formulation of the face. We prove constraint nondegeneracy for each feasible point of the resulting EDM optimization problem
{without a rank constraint, which guarantees the quadratic convergence of semismooth Newton's method}. To tackle the nonconvex rank constraint, we apply the majorized penalty approach developed by
Zhou et al. (IEEE Trans Signal Process 66(3):4331-4346, 2018). Numerical results verify the fast speed of the proposed
approach while giving comparable quality of solutions as other methods.}

{\bf Keywords:}
Single source localization, Euclidean distance matrix, Facial reduction, Majorized penalty approach, Constraint nondegeneracy

%%\pacs[JEL Classification]{D8, H51}

%%\pacs[MSC Classification]{35A01, 65L10, 65L12, 65L20, 65L70}

\maketitle

\section{Introduction}%Preliminaries
The single source localization problem (SSLP) has received extensive {attentions} such as in emergency rescue \cite{emergency}, monitoring \cite{monitor}, tracking and navigating \cite{navigating}, the ad hoc microphone array \cite{micro1, micro2, echo}, etc. The objective is to locate a source that is detected by a set of sensors with noisy observations between the source and the sensors. More specifically, given locations $\bfx_1,...,\bfx_n\in \mathbb{R}^r$ of $n$ sensors (usually $r=2$ or $3$ for visualization) and noisy observation ${\delta_{j,n+1}} >0$ between the $j$-th sensor and unknown source $\mathbf{x}_{n+1} \in \mathbb{R}^r$, where
\be\label{noisy}
\delta_{j,n+1}=\|\mathbf{x}_j-\mathbf{x}_{n+1}\|_2+\epsilon_j,\ j=1,...,n,
\ee
and $\epsilon_j$ is some random noise, we are looking for source ${\mathbf{x}_{n+1}} \in \mathbb{R}^r$ such that $\|\mathbf{x}_j-{\mathbf{x}_{n+1}}\|_2$ matches ${\delta_{j,n+1}},\ j=1,\cdots,n $ as much as possible.

To seek this approximation, typical vector based models can be summarized into the following least squares model
\be\label{LS}
\min_{\bfx_{n+1} \in\mathbb{R}^r}\sum_{j=1}^n(\|\mathbf{x}_j-\mathbf{x}_{n+1}\|_2^p-\delta_{j,n+1}^p)^2
\ee %Beck et al. \cite{ex1}
with $p=1$ or $2$. Problem (\ref{LS}) is nonconvex, and is in particular nonsmooth in the case of $p=1$. Different approaches have been developed including the simple fixed point algorithm (SFP), the sequential weighted least squares algorithm (SWLS) \cite{ex2}{, the} constrained weighted least squares approach (CWLS) \cite{Cheung} for $p=1$, { and the} least square approach based on squared-range measurements (SR-LS) for $p=2$ \cite{ex1} in which problem (\ref{LS}) leads to the generalized trust region subproblem (GTRS) that can be solved efficiently and globally.

%Problem (\ref{LS}) is nonconvex and can be reformulated as the generalized trust region subproblem (GTRS) \cite{ex1}. A fast algorithm developed by \cite{ex1} is the squared-range-based least squares (SR-LS), which leads to the GTRS. In \cite{ex1}, a related alternative approach is proposed to solve the constrained least square problem using squared range measurements, subject to quadratic constraints. Another iterative scheme based on fixed points is designed to solve non-convex maximum likelihood (ML) problems \cite{ex2}, which can be solved globally and efficiently after transforming it into GTRS. We also refer to the classic constrained weighted least squares method (CWLS) \cite{Cheung} solved by using the technique of Lagrange multipliers, which only solves the case when $r=2$.
%Based on formulating a Maximum-Likelihood (ML) estimation problem, \cite{ML-SRLS} uses convex relaxation techniques to obtain a semidefinite programming, which outperforms the SR-LS solution.}
%\footnote{http://www.personal.soton.ac.uk/hdqi/CODES/GTRS.m}.
%It can be obtained by \cite{GTRS} that the GTRS has no duality gap and the optimal solution can be extracted from the dual solution. The GTRS has necessary and sufficient optimality conditions, and thus an effective solution can be obtained by a simple iterative approach.

As a special case of sensor network localization (SNL), SSLP has also been frequently studied in the community of semidefinite programming (SDP) \cite{SNLSDP, SDP1, Henry_shu, So2007, Vaghefi}.
Let the Gram matrix be defined by $Y:=P^{\top}P$, with $P=[\bfx_1,...,\bfx_{n+1}]\in\mathbb{R}^{r\times (n+1)}$.
%with the inner product $\la X,Y\ra:=\mathrm{trace}(XY)$ and the Frobinius norm $\|X\|:=\sqrt{\mathrm{trace}(X^2)}$. An EDM is a matrix of squared Euclidean distances between a set of points $\{\bfx_i\}_{i=1,\cdots,n}\subseteq \mathbb{R}^{r}$, where $r$ is referred to as the embedding dimension.
By the famous linear transformation $\mathcal{K}: \mathcal{S}^{n+1}\rightarrow\mathcal{S}^{n+1}$ between the positive semidefinite matrices (PSD) cone and the EDM cone \cite{Kmap} defined by
$$
\mathcal{K}(Y)_{ij}=Y_{ii} +Y_{jj} -2Y_{ij},
$$
SSLP can be formulated as an SDP with a number of linear constraints
\begin{equation}\label{sdpsslp}
  \begin{split}
    \min_{Y\in\mathcal{S}^{n+1}} & \ \frac{1}{2}\|\mathcal{K}(Y)-\Delta\|_F^2 \\
     \mathrm{s.t.} &\ \rank(Y)\le r\\
      &\ Ye_{n+1}=0\\
      &\ Y\in \mathcal{S}^{n+1}_{+}\\
      &\ \mathcal {K}(Y)_{ij} = \|\bfx_i-\bfx_j\|_2^2,\ 1\le i<j\le n,
  \end{split}
\end{equation}
where $\Delta$ is the given estimated squared distance matrix, $e_{n+1}=(1,...,1)^{\top}\in\mathbb{R}^{n+1}$, $\mathcal{S}^{n+1}$ denotes the space of $(n+1)\times (n+1)$ real symmetric matrices and $\mathcal{S}^{n+1}_{+}$ denotes the cone of PSD. Such SDP can be solved by the well-developed SDP packages such as SeDuMi \cite{sedumi}, SDPT3 \cite{SDPT3} and the recent SDPNAL+ \cite{SDPNAL+} or some specially designed algorithms such as SNLSDP \cite{SNLSDP} and SFSDP \cite{SFSDP}.

{A separated line} of investigating SSLP is from the Euclidean distance matrix (EDM) optimization point of view. The success of EDM {models} to tackle the distance based optimization problems has been demonstrated in \cite{semi, EMBED, qiLAG} to deal with multidimensional scaling as well as {the} related problems. Notice that squared distance {matrix} $D\in\mathcal{S}^{n+1}$ defined by
\be\label{Dmat}
D_{ij}:=\|\bfx_i-\bfx_j\|_2^2,\ i,j=1,\cdots ,n+1
\ee
is {an EDM.} Here, $ r$ is the embedding dimension of $D$. In \cite{qiLAG}, Qi et al. proposed a Lagrangian dual method to solve the EDM model (which is equivalent to (\ref{LS}) with $p=2$):
\begin{align}\label{nedm}
  \min_{D\in\mathcal{S}^{n+1}} & \ \frac{1}{2}\|D-\Delta\|_F^2 \\
  \text{s.t.} & \ -D\in \mathcal{K}^{n+1}_+,\ \diag(D)=0\tag{\ref{nedm}{a}}\label{nedm-1}\\
    &\ \rank({J_{n+1} DJ_{n+1}})\le r\tag{\ref{nedm}{b}}\label{nedm-2}\\
    &\ D_{ij}=\|\bfx_i-\bfx_j\|_2^2,\ 1\leq i < j \leq n,\tag{\ref{nedm}{c}}\label{nedm-3}
\end{align}
where $\diag(D)$ is the vector formed by the diagonal elements of $D$,
$\mathcal{K}_+^{n+1} :=\{D\in\mathcal{S}^{n+1}\mid v^{T}Dv\ge 0,\ \sum_{i=1}^{n+1} v_i=0\}$, ${J_{n+1}} $ is the centralization matrix defined as {$J_{n+1} :=I_{n+1}-\frac{1}{n+1}e_{n+1} e_{n+1}^{\top}$} with identity matrix $I_{n+1}\in\mathbb{R}^{(n+1)\times (n+1)}$. The
basic difference between (\ref{sdpsslp}) and (\ref{nedm}) is that in (\ref{nedm}), one uses (\ref{nedm-1}) directly to build up the model, rather than using the Gram matrix via $\mathcal{K}$.

Notice that in SDP model (\ref{sdpsslp}) and EDM model (\ref{nedm}), there are a lot of equality constraints, describing the fixed distances between sensors. The number of these equality constraints is in the order of $O(n^2)$, which grows fast as $n$ grows. {Therefore, how to tackle these equality constraints not only becomes the main concern for SSLP, but also has its own interest in solving models like (\ref{sdpsslp}) and (\ref{nedm}) more efficiently.}
Very recently, the facial reduction technique was introduced to tackle the equality constraints in SDP. Specifically, Krislock and Wolkowicz \cite{Henry_shu} {used} facial reduction to solve a semi-definite relaxation of SNL, which has been {proven} to be much faster than the traditional SDP method. In \cite{psd}, Sremac et al. proposed to reformulate SDP model (\ref{sdpsslp}) as the following SDP problem based on the facial reduction technique
\begin{equation}\label{henrysslp}
  \begin{split}
    \min_{Y\in\mathcal{S}^{n+1}} & \ \frac{1}{2}\|\mathcal{K}(Y)-\Delta\|_F^2 \\
     \mathrm{s.t.} &\ \rank(Y)\le r\\
      &\ Y\in\mathrm{face}({\Theta},\mathcal{S}^{n+1}_+).
  \end{split}
\end{equation}
In (\ref{henrysslp}), the known distances between sensors are described in the set of constraints
$${\Theta}:=\{Y\in \mathcal{S}^{n+1}_{+}\mid\ Ye_{n+1}=0,\ \mathcal {K}(Y)_{ij} = \Delta_{ij},\ 1\le i,j\le n\},$$
and the Gram matrix $Y$ is restricted to the minimal face of PSD {(denoted as face($\Theta , \mathcal{S}^{n+1}_+$))} through facial reduction. {Roughly speaking, the minimal face containing a subset is a face that does not contain any other face which contains this subset (See Definition \ref{minface} for the formal definition).}
%{which was derived as
%$$
%\face(\Theta , \mathcal{S}^{n+1}_+)=\mathcal{S}^{n+1}_+\cap W^{\bot},
%$$
%where $W$ is the exposing vector of $\face(\Theta , \mathcal{S}^{n+1}_+)$, which we will further explain in detail in the next section.}
Then (\ref{henrysslp}) reduces to the following SDP model in a smaller scale:
\begin{equation}\label{new1}
\begin{split}
  \min_{R\in\mathcal{S}^{r+1}} & \ \frac{1}{2}\|\mathcal{K}(CRC^T)-\Delta\|_F^2 \\
   \mathrm{s.t.} &\ \rank(R)\le r\\
    &\ R\in\mathcal{S}^{r+1}_+,
\end{split}
\end{equation}
where $C\in\mathbb{R}^{(n+1)\times(r+1)}$ is a constant matrix.
The above two references \cite{qiLAG, psd} bring our attentions to the facial reduction technique which we will briefly review.

The idea of facial reduction is to take the intersection of a cone and a set of linear constraints as the constraint on some face of the cone. By characterizing the face property, we can study the corresponding optimization problem restricted on the face of the cone.
The faces of cones were investigated since 1980's \cite{Barker1981, Henry1981, Henry1981-2}, whereas the research on the face of positive semidefinite (PSD) cone was studied by Hill et al. \cite{Hill1987} and the general formulation of PSD faces was derived therein. Very recently, facial reduction techniques have been used in algorithms for several important scenarios including principal component analysis (PCA) \cite{HenryPCA} and matrix optimization problems \cite{Henry_shu, psd14}.
In contrast, the study on faces of the EDM cone is mainly from the theoretical point of view. {The EDM cone is the set of EDMs, defined by
$$
\mathcal{E}^{n}=\left\{D\in\mathcal{S}^n\mid \exists\ x_1,\cdots,x_n\in \mathbb{R}^r, \ \mathrm{such}\ \mathrm{that}\ D_{ij}=\|x_i-x_j\|^2_2,\ i,j=1,\cdots,n\right\}.
$$
We refer to \cite{JDJ, Kn+} for other characterizations of the EDM cone. }
In \cite{Tarazaga1996}, Tarazaga et al. showed that the faces of the EDM cone are linear mappings of that of the PSD cone, which provides a way to study the faces of the EDM cone. In \cite{Tarazaga}, Tarazaga described the faces of the EDM cone and related these faces to the supporting hyperplane of the EDM cone. In order to further obtain a more explicit expression of the EDM cone, it {was} proved by Alfakih \cite{Alfakih_remark} that faces of the EDM cone is a Gale subspace related to EDM. In his later monograph \cite{Alfakih_M}, he derived the minimal face of the EDM cone containing a matrix.

Based on the above analysis, EDM model (\ref{nedm}) for SSLP has its advantages over other models {like (\ref{sdpsslp}). Meanwhile} the facial reduction technique reduces a large scale problem (\ref{sdpsslp}) to a smaller one in (\ref{new1}), which is also attractive. Therefore, a natural question is whether one can apply the facial reduction technique to EDM model (\ref{nedm}). This motivates the work in our paper. Based on EDM model (\ref{nedm}) for SSLP, we develop a facial reduction model as follows %{(let$m:=n+1$)}
\begin{equation}\label{nedmf}
\begin{split}
  \min_{D\in\mathcal{S}^{n+1}} &\ \frac{1}{2}\|{D}-\Delta\|_F^2 \\
  \text{s.t.} &\  \mathrm{rank}({J_{n+1} DJ_{n+1}})\leq r \\
    &\ {D} \in \mathrm{face}({\Omega},\mathcal{E}^{n+1}),
\end{split}
\end{equation}
where
\be\label{ft}
{\Omega }:=\left\{D\in \mathcal{E}^{n+1}\mid\ D_{ij}={\|\bfx_i-\bfx_j\|_2^2},\ 1\leq i,j \leq n\right\},
\ee
which is equivalent to (\ref{nedm}). We show that $\mathrm{face}({\Omega},\mathcal{E}^{n+1})$ can be further characterized by a simple linear constraint $\la D,H\ra=0$, which brings an equivalent facial reduction of (\ref{nedmf}) and we call it the EDM model based on facial reduction (EDMFR):
\begin{align}\label{EDMFR_former}
  \min_{D\in \mathcal{S}^{n+1}} &\ \frac{1}{2}\|D -\Delta\|_F^2 \\
  \text{s.t.} &\ -D\in\mathcal{K}^{n+1}_+ ,\ \diag(D)=0\tag{\ref{EDMFR_former}{a}}\label{EDMFR-1}\\
    &\ {\rank({J_{n+1} DJ_{n+1}})\le r}\tag{\ref{EDMFR_former}{b}}\label{EDMFR-2}\\
    &\ \la D,H\ra=0.\tag{\ref{EDMFR_former}{c}}\label{EDMFR-3}
\end{align}
Compared with (\ref{nedm}), the $n(n-1)/2$ linear equality constraints in (\ref{nedm-3}) is replaced by one linear equality constraint (\ref{EDMFR-3}), which significantly reduces the number of equality constraints.

The contributions of our paper {are} in three folds. Firstly, we derive model (\ref{EDMFR_former}) for SSLP based on facial reduction. The advantage of EDM model (\ref{EDMFR_former}) is that it greatly reduces the extra number of equality constraints from $n(n-1)/2$ to one. Secondly, we show constraint nondegeneracy of the convex case of model (\ref{EDMFR_former}), where the rank constraint is dropped, {which makes it possible to solve the corresponding convex problem by the semismooth Newton's method in \cite{semi}.} Thirdly, inspired by the work in \cite{qi2018}, we design a fast algorithm called majorized penalty approach to solve model (\ref{EDMFR_former}) and verify the competitive performance of the algorithm by various numerical results.

%将低秩约束视为具有秩割的条件正半定锥。
%\subsection{Outline}
The organization of the paper is as follows. In Section \ref{sec2}, we show how to derive EDMFR (\ref{EDMFR_former}) from (\ref{nedmf}). We prove constraint nondegeneracy for the convex relaxation problem of EDMFR in Section \ref{secCN}. In Section \ref{sec3}, we show how to apply the majorized penalty approach to solve (\ref{EDMFR_former}). In Section \ref{sec4}, we report the numerical results to demonstrate the efficiency of our approach. We conclude the paper in Section \ref{sec5}.
%Constraint nondegeneracy of the subproblem of the latter method is proved in Section \ref{sec3.2}.
%In Section \ref{sec2}, we derive the minimal face of the the EDM cone containing a matrix and extend it to the minimal face of the the EDM cone containing a set. {It leads to our} {EDM} model based on facial reduction (EDMFR).

\emph{Notations.} The interior of $\mathcal{S}^n_+$ is denoted by $\mathcal{S}^n_{++}$. Let vec: $\mathcal{S}^n \rightarrow \mathbb{R}^{n(n+1)/2}_+$ map
the upper triangular elements of a symmetric matrix to a vector.
For $x\in\mathbb{R}^n$, let Diag($x$) be the diagonal matrix consisting of vector $x$. We denote the null space of a matrix $X$ by null$(X)$. Finally, we use $A_{ij}$ to denote the $(i,j)$ element in matrix $A$.

%For all $X\in\mathcal{S}^n$, we denote $\mtA(X):=$diag$(X)$ with $\mtA:\ \mathcal{S}^{n}\rightarrow\mathbb{R}^n$, $\mtA(X)$:=($X_{11},...,X_{nn}$) and the adjoint of $\mtA$ is $\mtA^*:\ \mathbb{R}^n\rightarrow\mathcal{S}^{n}$, the squared matrix with diagonal elements by a vector $x\in \mathbb{R}^n$% 我们把$vec$ 表示

%Next, we will introduce the basic model of NEDM and facial theory.

\section{Characterization of EDM Face ${\mathrm{face}(\Omega,\mathcal{E}^{n+1})}$}\label{sec2}
%In order to derive the facial reduction approach for (\ref{nedm}), we will give some preliminaries on the facial geometry.%, {which can be found in \cite{Rockafellar}.}
In this part, we will show how to derive the explicit form of $\face (\Omega,\mathcal{E}^{n+1})$. We need {the following} definitions including face, exposed face and minimal face, which can be found in Definition 1.1, 1.2 and Theorem 1.27 in \cite{Alfakih_M}.

Let $C$ be a convex {set} in an Euclidean space $\mathbb{E}$. A convex subset $F\subseteq C$ is a {\it face} of $C$, if for any $x,y\in C$ and any point $z$ lying between $x$ and $y$, one has $z\in F$ implies that $ x,y\in F$.

{The definitions of exposed face and minimal face are given as follows.}
\begin{definition}\label{expface}
{Let $C$ be a convex set in an Euclidean space $\mathbb{E}$. A face $F$ of $C$ is an {\it exposed face} when satisfying one of the following conditions:}
\bit
  \item[(i)] {There exists a supporting hyperplane $\mathcal{H}$ such that $F=C\cap \mathcal{H}$.}
  \item[(ii)] {There exists a vector $v$ satisfying $F=C\cap v^{\perp}$, where $v^{\bot}$ is defined by $v^{\bot}:=\{x\in\mathbb{E}\mid \la v,x\ra=0\}.$ In this case, we say that $v$ exposes $F$ and $v$ is the {\it exposing vector} of $F$.}
\eit

\noindent{If $C$ is a convex cone, condition (ii) can be reduced to the following condition.}
\bit
  \item[(iii)]
       {There exists a vector $v\in C^*$ satisfying $F=C\cap v^{\perp}$,} where $C^*$ is the dual cone of $C$ defined by
$$
C^*:=\left\{x^*{\in\mathbb{E}}\ \mid\ \la x,x^*\ra\ge 0,\ \forall\ x\in C\right\}.
$$
\eit
\end{definition}

\begin{definition}{\rm\cite{psd14}}\label{minface}
Let $C$ be a convex set in an Euclidean space $\mathbb{E}$. The {\it minimal face} containing a set $S\subseteq C,$ denoted as $\face(S,C)$, is the intersection of all faces of $C$ containing $S$.
%{
%Let $C$ be a convex set in an Euclidean space $\mathbb{E}$ and let $S$ be a subset of $C$. Then there is
%a {\it minimal} face of $C$ containing $S$ which we denote by $\face(S,C)$, which is the smallest face of $C$ that contains $S$.}
\end{definition}
{
Below we give some simple examples to illustrate the above definitions.}

\begin{example}\label{face1}
{As shown in Fig. \ref{facesex}, let $C=\{x\in \mathbb{R}^3\mid 0\le x_1,\ x_2,\ x_3\le 2\}$. $C_1=\{x\in \mathbb{R}^3\mid x_3=0,\ 0\le x_1,\ x_2\le 2\}$ and $C_2=\{x\in \mathbb{R}^3\mid x_2=2,\ 0\le x_1,\ x_3\le 2\}$ are faces of $C$. It can be easily shown that $C_1$ and $C_2$ are exposed faces of $C$. %We omit the details here.
For example, for $C_1$, we can find the supporting hyperplane $\mathcal{H}=\{x\in\mathbb{R}^3\mid x_3=0,\ x_1,x_2\in\mathbb{R}\}$ such that $C_1=C\cap \mathcal{H}$. Alternatively, by choosing $v=(0,0,1)^{\top}$, there is $C_1=C\cap v^{\bot}$. Therefore, the exposing vector of $C_1$ is $v=(0,0,1)^{\top}$.

The convex subset $S=\{x\in \mathbb{R}^3\mid x_2=2,\ (x_1-1)^2+(x_3-1)^2\le\frac{1}{4}\}$ is not a face of $C$. However, we can find the minimal face containing $S$, which is $C_2$, i.e.,  face$(S,C)=C_2$. It can thus be seen that the minimal face can reduce the feasible region from $C$ to a face of $C$, which achieve the purpose of dimension reduction.}
\begin{figure}[H]
  \centering
  % Requires \usepackage{graphicx}
  \includegraphics[width=0.5\textwidth]{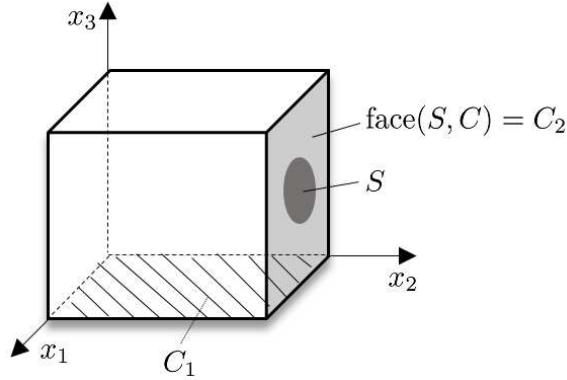}\\
  \caption{Demonstration of face, minimal face in Example \ref{face1}}\label{facesex}
\end{figure}
\end{example}
{It should be pointed out that not all faces are exposed. We give a simple example.}
\begin{example}\label{face2}
{Consider the convex set $C\subseteq\mathbb{R}^2$ described in Fig. \ref{unexpose}, where
$$C=\left\{x\in\mathbb{R}^2\mid x_1^2+x_2^2\le 1\right\}\bigcup \left\{x\in\mathbb{R}^2\mid -1\le x_1\le 1,\ -1\le x_2\le 0\right\}.$$
Consider the points $y=(-1,0)^{\top}$ and $z=(1,0)^{\top}$. One can verify that $\{y\}$ and $\{z\}$ are faces of $C$. $\{y\}$ is not an exposed face of $C$. The reason is that there is no hyperplane $\mathcal{H}$, such that $\{y\}=C\cap \mathcal{H}$. Similarly, $\{z\}$ is not an exposed face of $C$.}
\begin{figure}[H]
  \centering
  % Requires \usepackage{graphicx}
  \includegraphics[width=0.5\textwidth]{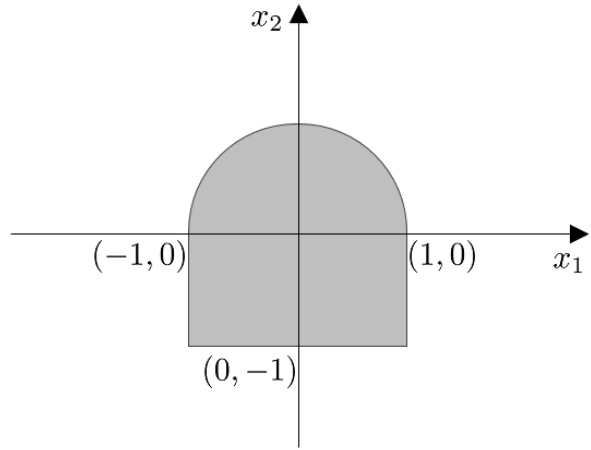}\\
  \caption{Demonstration of Example \ref{face2}.}\label{unexpose}
\end{figure}
\end{example}

Having introduced the definitions related to face above, we are ready to derive the explicit form of $\face(\Omega, \mathcal{E}^{n+1})$. {Here $\Omega$ is defined as in (\ref{ft}).} %To that end, we need the tool of projected Gram matrix.
Since the faces of the EDM cone are exposed \cite[Section 2.1]{psd14}, the minimal face $\face(\Omega,\mathcal{E}^{n+1})$ is exposed as well. As a result, the only thing we need to do is to find the exposing vector of $\face(\Omega,\mathcal{E}^{n+1})$. To that end, we first derive the exposing vector of $\face(\overline{D},\mathcal{E}^{n})$, where $\overline{D}\in\mathcal{S}^n$ is defined by
\be\label{Dbar}
\overline{D}_{ij}=\|\bfx_i-\bfx_j\|_2^2,\ i,j=1,...,n.
\ee
Then inspired by \cite{psd}, we extend the result to a higher dimensional case, and derive the exposing vector for $\face(\Omega,\mathcal{E}^{n+1})$.
Hence, the derivation of $\face(\Omega,\mathcal{E}^{n+1})$ is divided into the following two steps. %In fact, by \cite[Theorem 4.1]{psd14}, the minimal face of $\mathcal{E}^n$ containing $\overline{D}$ immediately yields the minimal face of $\mathcal{E}^{n+1}$ containing the constraint $\Omega$.

\textbf{(1) Exposing vector of $\face(\overline{D},\mathcal{E}^{n})$.}

%To derive $\face(\Omega, \mathcal{E}^{n+1})$, we need to find the exposing vector.
To derive the exposing vector of $\face(\overline{D},\mathcal{E}^{n})$, we use the tool of projected Gram matrix. The {\it projected Gram matrix} $Y$ of $\overline{D}$ is defined by
\be\label{proGram}
Y:=-\frac{1}{2}{V ^{\top} \overline{D} V}, \ \text{where} \
    {V}=\left[\begin{array}{c}
    -\frac{1}{\sqrt{n}}e_{n-1}^{\top}\\
    I_{n-1}-\frac{1}{n+\sqrt{n}}e_{n-1}e_{n-1}^{\top}
    \end{array} \right]\in\mathbb{R}^{n\times (n-1)}.
\ee%where the matrix $V\in \mathbb{R}^{n\times (n-1)}$ is given by
{It is easy to verify that} $V$ satisfies $V^{\top} e_n=0 ,\ {V^{\top} V}=I_{n-1} $ and {${VV^{\top} }=J_n$.} %The following proposition gives the explicit form of the exposing vector for face($\overline{D} ,\mathcal{E}^n $).

%By comparing the EDM model (\ref{nedmf}) with (\ref{nedm}), we can see that the range for solving SSLP is converted from the entire the EDM cone to a face of the the EDM cone. The fixed distance constraint reduced by $n(n-1)/2$ to one, which will greatly reduce the dimension of the Lagrange multiplier in \cite[eq (10)]{qiLAG}. To derive the explicit form of $\face(\Omega, \mathcal{E}^{n+1})$, we address this issue in Section \ref{sec2}.
{
There are many theories about faces of the EDM cone that have been reflected in \cite{Alfakih_M}. Here we briefly describe the results. The following two definitions are important to characterize the associated hyperplane of $\face(\overline{D},\mathcal{E}^{n})$.
\begin{definition}{\rm(}Gale subspace{\rm)}
  Let $\widetilde{D}\in \mathcal{E}^n $ be a given {\rm EDM}, $P=[p^1\cdots p^n]^{\top}\in \mathbb{R}^{n\times r} $ be the coordinate matrix, and
$$
\left[
  \begin{array}{c}
    P^{\top} \\
    e^{\top} \\
  \end{array}
\right]=\left[
          \begin{array}{ccc}
            p^1 & ... & p^n \\
            1 & ... & 1 \\
          \end{array}
        \right]
\in \mathbb{R}^{(r+1)\times n}.$$
Then the corresponding \emph{Gale subspace} is
$$
G(\widetilde{D}):=\mathrm{null}\left(\left[
  \begin{array}{c}
    P^{\top} \\
    e^{\top} \\
  \end{array}
\right]\right)
=\mathrm{null}(P^{\top})\cap \mathrm{null}(e^{\top}).$$
\end{definition}
\begin{definition}{\rm(}Gale matrix{\rm)}
If $G(\widetilde{D})$ denotes the Gale subspace of an EDM $\widetilde{D}$, then a Gale matrix of $\widetilde{D}$ is a matrix whose columns form the basis for $G(\widetilde{D})$.
\end{definition}
{The following lemma relates $\text{face}(\widetilde{D} ,\mathcal{E}^n)$ to a supporting hyperplane of the EDM cone, which plays an important role in our following analysis.}
\begin{lemma}\label{hyperplane}{\rm\cite[{\it Theorem} 5.8]{Alfakih_M}}
 Let $F$ be a proper face of $\mathcal{E}^n$. Let $\widetilde{D}\in\mathrm{relint}(F)$ and $Z_1$ be a Gale matrix of $\widetilde{D}$. Let
$\mathcal{H}=\{D\in \mathcal{E}^n: \ \mathrm{trace}(Z_1 ^{\top} DZ_1)=0\} $, then $F=\mathcal{E}^n\cap \mathcal{H} $. {Here $\mathrm{relint}(F)$ denotes the relative interior of $F$.}
\end{lemma}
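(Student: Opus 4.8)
The plan is to transport the statement to the positive semidefinite cone, where the facial structure is completely understood, and then pull the resulting exposing vector back to the EDM cone through the isometry $V$ of (\ref{proGram}). Concretely, I would work with the projected Gram matrix $Y_D := -\frac{1}{2}V^{\top}DV$. Because $V$ satisfies $V^{\top}e_n=0$, $V^{\top}V=I_{n-1}$ and $VV^{\top}=J_n$, the map $D\mapsto Y_D$ is a linear bijection between $\mathcal{E}^n$ and $\mathcal{S}^{n-1}_+$ that carries faces to faces and relative interiors to relative interiors (its inverse being $Y\mapsto \mathcal{K}(VYV^{\top})$). Under this correspondence the proper face $F$ is sent to the PSD face $\widehat{F}=\{Y\succeq 0:\ \mathrm{range}(Y)\subseteq\mathcal{R}\}$ determined by the subspace $\mathcal{R}:=\mathrm{range}(Y_{\widetilde{D}})$, since $\widetilde{D}\in\mathrm{relint}(F)$ forces $Y_{\widetilde{D}}$ to lie in $\mathrm{relint}(\widehat{F})$ and hence to attain the maximal range $\mathcal{R}$ among all $Y_D$ with $D\in F$.

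First I would identify the Gale matrix $Z_1$ with the null space of $Y_{\widetilde{D}}$. Using the coordinate matrix $P$ and the centering identity $-\frac{1}{2}J_n\widetilde{D}J_n=(J_nP)(J_nP)^{\top}$, one checks that the Gale subspace $G(\widetilde{D})=\mathrm{null}(P^{\top})\cap\mathrm{null}(e_n^{\top})$ coincides with $\mathrm{null}(-\frac{1}{2}J_n\widetilde{D}J_n)\cap e_n^{\perp}$. Since $\mathrm{range}(V)=e_n^{\perp}$ and $V^{\top}(-\frac{1}{2}J_n\widetilde{D}J_n)V=Y_{\widetilde{D}}$, the injective map $V$ sends $\mathrm{null}(Y_{\widetilde{D}})$ into $G(\widetilde{D})$; a dimension count, $\dim G(\widetilde{D})=n-r-1=(n-1)-\mathrm{rank}(Y_{\widetilde{D}})$, confirms that the two subspaces match. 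Hence I may write $Z_1=VN$ for some $N$ whose columns form a basis of $\mathrm{null}(Y_{\widetilde{D}})$, so that $Z_1Z_1^{\top}=VNN^{\top}V^{\top}$ with $NN^{\top}\succeq 0$ and $\mathrm{range}(NN^{\top})=\mathrm{null}(Y_{\widetilde{D}})$.

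Next I would evaluate the functional defining $\mathcal{H}$. For any $D$ one computes $\mathrm{trace}(Z_1^{\top}DZ_1)=\langle D,VNN^{\top}V^{\top}\rangle=\langle V^{\top}DV,NN^{\top}\rangle=-2\langle Y_D,NN^{\top}\rangle$. When $D\in\mathcal{E}^n$ we have $Y_D\succeq 0$, and since $NN^{\top}\succeq 0$ the quantity $\langle Y_D,NN^{\top}\rangle$ is nonnegative and vanishes precisely when the ranges are orthogonal, i.e. $\mathrm{range}(Y_D)\perp\mathrm{range}(NN^{\top})=\mathrm{null}(Y_{\widetilde{D}})$, equivalently $\mathrm{range}(Y_D)\subseteq\mathcal{R}$. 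Therefore $\mathcal{E}^n\cap\mathcal{H}=\{D\in\mathcal{E}^n:\ \mathrm{range}(Y_D)\subseteq\mathcal{R}\}$, which is exactly the pullback of $\widehat{F}$, namely $F$, and the proof is complete.

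The main obstacle I expect is the bookkeeping in the second step: showing that $V$ carries $\mathrm{null}(Y_{\widetilde{D}})$ isomorphically onto the Gale subspace $G(\widetilde{D})$, including the dimension count and the rank-preserving identity $V^{\top}(-\frac{1}{2}J_n\widetilde{D}J_n)V=Y_{\widetilde{D}}$. The PSD part, i.e. the range characterization of PSD faces and the sign analysis of $\langle Y_D,NN^{\top}\rangle$, is routine, but the transition between the two ambient dimensions $n$ and $n-1$, together with the fact that it is the relative interior point $\widetilde{D}$ (rather than an arbitrary point of $F$) that pins down $\mathcal{R}$, are the delicate points that must be handled carefully.
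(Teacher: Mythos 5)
Your proposal is correct, but note that the paper does not prove this lemma at all: it is imported verbatim as a citation of \cite[Theorem 5.8]{Alfakih_M}, so there is no internal proof to compare against. What you have written is a self-contained reconstruction of the underlying argument, and it holds up. The key steps all check out: the map $D\mapsto Y_D=-\tfrac12 V^{\top}DV$ is indeed a linear bijection of $\mathcal{E}^n$ onto $\mathcal{S}^{n-1}_+$ (inverse $Y\mapsto\mathcal{K}(VYV^{\top})$), hence carries faces to faces and relative interiors to relative interiors; the identity $-\tfrac12 J_n\widetilde{D}J_n=(J_nP)(J_nP)^{\top}$ together with $VV^{\top}=J_n$ shows that $V$ restricts to an isomorphism of $\mathrm{null}(Y_{\widetilde{D}})$ onto $G(\widetilde{D})$ (here the two-sided inclusion argument is cleaner than the dimension count you flag as delicate, and avoids having to assume $[P^{\top};e^{\top}]$ has full row rank); and the computation $\mathrm{trace}(Z_1^{\top}DZ_1)=-2\langle Y_D,NN^{\top}\rangle$ with both factors PSD reduces the membership in $\mathcal{H}$ to the range inclusion $\mathrm{range}(Y_D)\subseteq\mathrm{range}(Y_{\widetilde{D}})$, which is exactly the standard description of the PSD face pinned down by the relative-interior point. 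Two small remarks: your argument works for an arbitrary Gale matrix $Z_1$, not just an orthonormal one, since only $\mathrm{range}(Z_1)=G(\widetilde{D})$ is used; and this derivation has the side benefit of making transparent why $-Z_1Z_1^{\top}$ lies in $(\mathcal{E}^n)^*$ and exposes $\face(\overline{D},\mathcal{E}^n)$, which the paper re-derives separately in Proposition \ref{exp-vector}.
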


\begin{lemma}{\rm\cite[{\it Theorem} 11.6]{Rockafellar}}\label{Rock}
Let $C$ be a convex set and $S$ be the nonempty convex set of $C$. In order that there exists a supporting hyperplane $\mathcal{H}$ to $C$ containing $S$ and $C\nsubseteq \mathcal{H}$, if and only if $S\cap \mathrm{relint}(C)=\emptyset$.
\end{lemma}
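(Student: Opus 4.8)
The plan is to prove the two implications separately, reading the statement as a characterization of when $C$ admits a \emph{proper} supporting hyperplane through $S$. Throughout I write a candidate hyperplane as $\mathcal{H}=\{x\in\mathbb{E}\mid \la a,x\ra=b\}$ with $C$ contained in the closed half-space $\{x\mid \la a,x\ra\le b\}$, so that $\mathcal{H}$ supports $C$, and I recall that $S$ is a nonempty convex subset of $C$.

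For the necessity direction, I would assume such an $\mathcal{H}$ exists with $S\subseteq\mathcal{H}$ and $C\nsubseteq\mathcal{H}$, and first pick a point $x_0\in C$ with $\la a,x_0\ra<b$, which exists precisely because $C$ is not contained in $\mathcal{H}$. Then I claim $\mathcal{H}\cap\mathrm{relint}(C)=\emptyset$: if some $z\in\mathrm{relint}(C)$ had $\la a,z\ra=b$, the defining property of the relative interior lets me push slightly past $z$ along the segment from $x_0$, producing $z+\varepsilon(z-x_0)\in C$ for some small $\varepsilon>0$; a direct computation gives $\la a,z+\varepsilon(z-x_0)\ra=b+\varepsilon(b-\la a,x_0\ra)>b$, contradicting that $C$ lies in the half-space. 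Since $S\subseteq\mathcal{H}$, this yields $S\cap\mathrm{relint}(C)=\emptyset$.

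For the sufficiency direction, I would rely on the proper separation theorem for convex sets \cite[Theorem 11.3]{Rockafellar}, which asserts that two nonempty convex sets admit a proper separating hyperplane if and only if their relative interiors are disjoint. From the hypothesis $S\cap\mathrm{relint}(C)=\emptyset$ together with $\mathrm{relint}(S)\subseteq S$ I immediately obtain $\mathrm{relint}(S)\cap\mathrm{relint}(C)=\emptyset$, so $S$ and $C$ can be properly separated by some $\mathcal{H}=\{x\mid\la a,x\ra=b\}$, say with $\la a,x\ra\le b$ on $C$ and $\la a,x\ra\ge b$ on $S$. The key observation is that the inclusion $S\subseteq C$ collapses the separation onto $\mathcal{H}$: each $x\in S$ satisfies both inequalities, hence $\la a,x\ra=b$ and $S\subseteq\mathcal{H}$. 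Thus $\mathcal{H}$ supports $C$ and contains $S$, while properness ($S\cup C\nsubseteq\mathcal{H}$) combined with $S\subseteq\mathcal{H}$ forces $C\nsubseteq\mathcal{H}$, as required.

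The algebra in the necessity direction is routine; the genuine content sits in the sufficiency direction, where the main obstacle is the proper separation theorem itself. In a fully self-contained treatment one would have to derive proper separation from the basic separation of a point from a convex set (via Hahn--Banach or a projection argument), together with the finite-dimensional fact that a nonempty convex set has nonempty relative interior. Since the statement is quoted from Rockafellar, I would invoke \cite[Theorem 11.3]{Rockafellar} directly and concentrate the exposition on the short reduction $\mathrm{relint}(S)\subseteq S$ and the collapse $S\subseteq\mathcal{H}$ that turns the separating hyperplane into a supporting one.
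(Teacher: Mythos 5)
Your proof is correct. Note, however, that the paper does not prove Lemma \ref{Rock} at all: it is imported verbatim as \cite[Theorem 11.6]{Rockafellar}, so there is no in-paper argument to compare against. Your write-up is essentially the standard textbook derivation, and in fact close to Rockafellar's own: the sufficiency direction reduces to the proper separation theorem \cite[Theorem 11.3]{Rockafellar} exactly as he does, with the pleasant observation that $S\subseteq C$ collapses the two separating inequalities and forces $S\subseteq\mathcal{H}$, after which properness hands you $C\nsubseteq\mathcal{H}$. Your necessity direction is slightly more self-contained than Rockafellar's: rather than running the equivalence $\mathrm{relint}(S)\cap\mathrm{relint}(C)=\emptyset \Leftrightarrow S\cap\mathrm{relint}(C)=\emptyset$ for $S\subseteq C$ in both directions (the nontrivial direction of which needs a line-segment argument of its own), you prove directly that a proper supporting hyperplane cannot meet $\mathrm{relint}(C)$, using the characterization that a relative interior point can be pushed strictly past itself along any segment from a point of $C$ (\cite[Theorem 6.4]{Rockafellar}); this avoids the subtle half of that equivalence entirely. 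The only implicit dependencies worth flagging are the two finite-dimensional facts you already name: nonemptiness of the relative interior of a nonempty convex set, and the proper separation theorem itself. Given that the lemma is quoted rather than proved in the paper, invoking \cite[Theorem 11.3]{Rockafellar} as a black box is entirely appropriate.
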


{Lemma \ref{Rock} leads to the following result.}

\begin{lemma}\label{relint}
Let $\overline{D}$ be given by {\rm(\ref{Dbar})} and $F$ be the minimal face of $\mathcal{E}^n$ containing $\overline{D}$. That is $F=\mathrm{face}(\overline{D},\mathcal{E}^n)$. Then $\overline{D}\in \mathrm{relint}(F)$.
\end{lemma}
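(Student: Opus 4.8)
The plan is to prove the statement by contradiction, relying only on Lemma \ref{Rock} together with the defining property of the minimal face; Lemma \ref{hyperplane} will not be needed here. Recall that, by Definition \ref{minface}, $F=\mathrm{face}(\overline{D},\mathcal{E}^n)$ is the intersection of all faces of $\mathcal{E}^n$ containing $\overline{D}$, and since an intersection of faces is again a face, $F$ is the \emph{smallest} face of $\mathcal{E}^n$ containing $\overline{D}$. In particular $\overline{D}\in F$.

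First I would suppose, to the contrary, that $\overline{D}\notin \mathrm{relint}(F)$, and apply Lemma \ref{Rock} with $C=F$ and the nonempty convex subset $S=\{\overline{D}\}\subseteq F$. The assumption $\overline{D}\notin \mathrm{relint}(F)$ is exactly $S\cap \mathrm{relint}(F)=\emptyset$, so the lemma produces a supporting hyperplane $\mathcal{H}$ to $F$ with $\overline{D}\in\mathcal{H}$ and $F\nsubseteq \mathcal{H}$.

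I would then examine the set $G:=F\cap\mathcal{H}$. Since $\mathcal{H}$ is a supporting hyperplane of $F$, $G$ is an (exposed) face of $F$; because a face of a face of $\mathcal{E}^n$ is again a face of $\mathcal{E}^n$ (transitivity of the face relation), $G$ is a face of $\mathcal{E}^n$. Moreover $\overline{D}\in F$ and $\overline{D}\in\mathcal{H}$ give $\overline{D}\in G$, so $G$ is a face of $\mathcal{E}^n$ containing $\overline{D}$. Finally, $F\nsubseteq\mathcal{H}$ furnishes a point of $F$ lying off $\mathcal{H}$, whence $G=F\cap\mathcal{H}\subsetneq F$. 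Thus $G$ is a face of $\mathcal{E}^n$ containing $\overline{D}$ that is strictly smaller than $F$, contradicting the minimality of $F$. Therefore $\overline{D}\in\mathrm{relint}(F)$.

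The argument is essentially the standard fact that any point of a convex set lies in the relative interior of the minimal face containing it, specialized to the cone $\mathcal{E}^n$. The only genuinely delicate points are (i) checking that $F\cap\mathcal{H}$ really is a face of $\mathcal{E}^n$, for which I would invoke transitivity of the face relation together with the fact that the intersection of a convex set with a supporting hyperplane is an exposed face, and (ii) confirming properness $G\subsetneq F$, which is immediate from $F\nsubseteq\mathcal{H}$. I expect the face-of-a-face verification in (i) to be the main obstacle to state cleanly, since that is where one must ensure that a supporting hyperplane of $F$ yields a genuine subface of the ambient cone $\mathcal{E}^n$ rather than merely a face of $F$.
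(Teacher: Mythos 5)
Your proposal is correct and follows essentially the same route as the paper's proof: a contradiction argument that applies Lemma \ref{Rock} to $S=\{\overline{D}\}$ and $C=F$, forms the exposed face $F\cap\mathcal{H}$, uses transitivity of the face relation to see it is a face of $\mathcal{E}^n$ containing $\overline{D}$, and contradicts the minimality of $F$ via $F\nsubseteq\mathcal{H}$. No substantive differences.
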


\begin{proof}
%If one can find a smaller face of $\mathcal{E}^n$ containing $\overline{D}$, one can reach a contradiction. Hence, if $\overline{D}\notin\mathrm{relint}({F})$, we will construct a smaller face of $F$ containing $\overline{D}$, which is also a subset of $F$.

For contradiction, if $\overline{D}\notin\mathrm{relint}({F})$, there is $\{\overline{D}\}\cap\mathrm{relint}(F)=\emptyset$. Since $\overline{D}\in F$, $\overline{D}$ is on the relative boundary of $F$. By Lemma \ref{Rock}, there exists a supporting hyperplane (denoted as $\mathcal{H}_1$) to $F$ and $F\nsubseteq \mathcal{H}_1$. Let $F'=F\cap \mathcal{H}_1$. Consequently, \begin{equation}\label{eq-1}
F'\subseteq F, \ F'\neq F.
\end{equation}
Now for $F'$, we have found a supporting hyperplane $\mathcal{H}_1$ such that $F'=F\cap \mathcal{H}_1$. Therefore, by Definition \ref{expface} (i), $F'$ is an exposed face of $F$.
Recall that $F$ is a face of $\mathcal{E}^n$. Together with the fact that faces are transitive \cite[Theorem 1.26]{Alfakih_M}, we have that $F'$ is also a face of $\mathcal{E}^n$. Note that $\overline{D}\in F$ and $\overline{D}\in \mathcal{H}_1$, there is $\overline{D}\in F'$. In other words, $F'$ is a face of $\mathcal{E}^n$ containing $\overline{D}$.
By Definition \ref{minface}, $F$ is the intersection of all faces of $\mathcal{E}^n$ containing $\overline{D}$. Therefore, $F\subseteq F'$. This contradicts with (\ref{eq-1}). Therefore, $\overline{D}\in\mathrm{relint}({F})$.

%Now we have found another face of $\mathcal{E}^n$ containing $\overline{D}$, which is also a subset of $F$. This contradicts with the fact that $F$ is the intersection of all faces of $\mathcal{E}^n$ containing $\overline{D}$, i.e., the minimal face of $\mathcal{E}^n$ containing $\overline{D}$.
\end{proof}

{Based on Lemma \ref{hyperplane}, Lemma \ref{Rock} and Lemma \ref{relint}, we can give the explicit form of the exposing vector for face($\overline{D} ,\mathcal{E}^n $). }
}

\begin{proposition}\label{exp-vector}% and $V$ be given by {\rm(\ref{matV})}
Let $\overline{D}$ be given by {\rm(\ref{Dbar})} and $Y$ be the projected Gram matrix defined by {\rm(\ref{proGram})}. Define $Z_1\in\mathbb{R}^{n\times(n-r-1)}$ by
    \be\label{galmat}
    Z_1 :=V U,
    \ee
where $U\in \mathbb{R}^{(n-1)\times(n-r-1)}$ is a matrix whose columns form orthonormal bases of {\rm null}$(Y)$.\footnote{By \cite[Lemma 3.8]{Alfakih_M}, $Z_1$ is a Gale matrix.} Then $-Z_1 Z_1 ^{\top}  $ exposes {\rm face($\overline{D} ,\mathcal{E}^n $)}.
\end{proposition}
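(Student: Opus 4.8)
The plan is to invoke Lemma \ref{hyperplane} with $\widetilde{D}=\overline{D}$ and the given $Z_1$, and then translate the resulting supporting hyperplane into the exposing-vector language of Definition \ref{expface}(iii). Two ingredients are needed before Lemma \ref{hyperplane} can be applied: that $\overline{D}$ lies in the relative interior of $F=\face(\overline{D},\mathcal{E}^n)$, and that $Z_1=VU$ is genuinely a Gale matrix of $\overline{D}$. The first is exactly Lemma \ref{relint}. The second is the content of the footnote citing \cite[Lemma 3.8]{Alfakih_M}; concretely, for each column $z=Vu$ with $u\in\mathrm{null}(Y)$ one has $e_n^\top z=(V^\top e_n)^\top u=0$, and the defining relation of the projected Gram matrix $Y=-\tfrac12 V^\top\overline{D}V$ forces $P^\top z=0$, so the columns of $Z_1$ lie in the Gale subspace $G(\overline{D})=\mathrm{null}(P^\top)\cap\mathrm{null}(e^\top)$; since $\dim\mathrm{null}(Y)=(n-1)-\rank(Y)=n-r-1$ equals $\dim G(\overline{D})$, they in fact form a basis. (Note also that $F$ is a \emph{proper} face, as $n-r-1\ge 1$ in the SSLP setting, so Lemma \ref{hyperplane} is applicable.)

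With these in hand, Lemma \ref{hyperplane} yields $F=\mathcal{E}^n\cap\mathcal{H}$ with $\mathcal{H}=\{D:\mathrm{trace}(Z_1^\top DZ_1)=0\}$. I would then rewrite this hyperplane in inner-product form: since $\mathrm{trace}(Z_1^\top DZ_1)=\mathrm{trace}(DZ_1Z_1^\top)=\langle D,Z_1Z_1^\top\rangle=-\langle D,-Z_1Z_1^\top\rangle$, the condition $\mathrm{trace}(Z_1^\top DZ_1)=0$ is equivalent to $\langle D,-Z_1Z_1^\top\rangle=0$. Hence $F=\mathcal{E}^n\cap(-Z_1Z_1^\top)^\perp$, which is precisely the shape $F=C\cap v^\perp$ appearing in Definition \ref{expface}(ii).

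To upgrade this to Definition \ref{expface}(iii) — that is, to certify $-Z_1Z_1^\top$ as an \emph{exposing vector} rather than merely a normal vector — it remains to verify $-Z_1Z_1^\top\in(\mathcal{E}^n)^*$. This is the step I expect to carry the real content. The key observation is that every column $z$ of $Z_1$ sums to zero ($e_n^\top z=0$, as above), while every EDM $D$ is conditionally negative semidefinite: for points $p^i$ realizing $D$ and any $v$ with $\sum_i v_i=0$,
$$
v^\top Dv=\sum_{i,j}v_iv_j\bigl(\|p^i\|_2^2+\|p^j\|_2^2-2\langle p^i,p^j\rangle\bigr)=-2\Bigl\|\sum_i v_i p^i\Bigr\|_2^2\le 0,
$$
the first two sums vanishing because $\sum_i v_i=0$. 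Applying this to each column $z_k$ of $Z_1$ gives $\langle D,Z_1Z_1^\top\rangle=\mathrm{trace}(Z_1^\top DZ_1)=\sum_k z_k^\top Dz_k\le 0$, so $\langle D,-Z_1Z_1^\top\rangle\ge 0$ for all $D\in\mathcal{E}^n$, i.e.\ $-Z_1Z_1^\top\in(\mathcal{E}^n)^*$. Combined with $F=\mathcal{E}^n\cap(-Z_1Z_1^\top)^\perp$, this is exactly Definition \ref{expface}(iii), so $-Z_1Z_1^\top$ exposes $\face(\overline{D},\mathcal{E}^n)$.

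The main obstacle is the dual-cone membership; everything else is bookkeeping that chains Lemmas \ref{relint} and \ref{hyperplane} together through the trace/inner-product identity. The membership hinges on pairing the sum-to-zero structure of the Gale-matrix columns with the conditionally negative semidefinite characterization of EDMs. Once that characterization is invoked the inequality is immediate, but it is the one place where a genuine property of the EDM cone, rather than a purely formal manipulation, is required.
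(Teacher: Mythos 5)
Your proof is correct and follows the same skeleton as the paper's: Lemma \ref{relint} to place $\overline{D}$ in $\mathrm{relint}(F)$, Lemma \ref{hyperplane} to write $F=\mathcal{E}^n\cap\mathcal{H}$ with $\mathcal{H}$ the trace hyperplane determined by $Z_1$, and then a verification that $-Z_1Z_1^\top\in(\mathcal{E}^n)^*$ so that the normal vector is genuinely an exposing vector in the sense of Definition \ref{expface}(iii). The one step where you diverge is the dual-cone membership, which you correctly flag as the substantive part. The paper computes $(\mathcal{E}^n)^*$ in closed form by passing through $\mathcal{E}^n=\mathcal{K}(\mathcal{S}^n_+)$ and the adjoint $\mathcal{K}^*$, arriving at $(\mathcal{E}^n)^*=\{A:\mathrm{Diag}(Ae_n)-A\in\mathcal{S}^n_+\}$, and then checks that $\mathrm{Diag}(-Z_1Z_1^\top e_n)+Z_1Z_1^\top=Z_1Z_1^\top\succeq 0$ using $e_n^\top Z_1=0$. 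You instead argue directly from the definition of the dual cone, pairing the sum-to-zero columns of $Z_1$ with the conditional negative semidefiniteness of EDMs ($v^\top Dv=-2\|\sum_i v_ip^i\|_2^2\le 0$ for $e^\top v=0$) to get $\mathrm{trace}(Z_1^\top DZ_1)\le 0$ for every $D\in\mathcal{E}^n$. Both verifications are valid and use the same underlying fact ($e_n^\top Z_1=0$); yours is more elementary and self-contained, while the paper's yields the full characterization of $(\mathcal{E}^n)^*$ as a byproduct, which is reusable elsewhere. You also supply a short argument that $Z_1=VU$ is a Gale matrix, which the paper relegates to a footnote citing \cite[Lemma 3.8]{Alfakih_M}; that is a harmless (indeed helpful) addition. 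No gaps.
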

\begin{proof}
Let $F:=\face(\overline{D} ,\mathcal{E}^n )$. {By Lemma \ref{relint},} we have $\overline{D}\in\mathrm{relint}({F})$. Therefore,
by {Lemma \ref{hyperplane}}, we can find a hyperplane in which face $F$ containing matrix $\overline{D}$ lies. Hence, by the above analysis and {Lemma \ref{hyperplane}}, we have {\rm face($\overline{D} ,\mathcal{E}^n $)}$=\mathcal{E}^n\cap \mathcal{H} $, where $\mathcal{H}=\{D_1\in \mathcal{E}^n: \ \mathrm{trace}(Z_1 ^{\top} D_1 Z_1)=0\} $. In other words,
$$
-Z_1 Z_1 ^{\top} \in \mathcal{H}^{\bot}.
$$
Next we will prove $-Z_1 Z_1 ^{\top} \in (\mathcal{E}^{n})^{*}$. %By [facialreduction?λ????, P36],
Indeed, we have
$$
\begin{array}{ll}
  (\mathcal{E}^{n})^{*} & =\{A \in \mathcal{S}^n\ \mid \ \langle A, B\rangle\geq 0, \ \forall\ B\in \mathcal{E}^n\} \\
   & =\{A \in \mathcal{S}^n\ \mid \ \langle A, B\rangle\geq 0,\ \forall\ B\in \mathcal{K}(\mathcal{S}^n_+)\}\ \ \ \\
    & =\{A \in \mathcal{S}^n\ \mid \ \langle A, \mathcal{K}(Y)\rangle\geq 0,\ \forall \ Y\in \mathcal{S}^n _+\}\\
    & =\{A \in \mathcal{S}^n\ \mid \ \langle \mathcal{K}^{*}(A), Y\rangle\geq 0,\ \forall\ Y\in \mathcal{S}^n _+\}\\
    & =\left\{A\in \mathcal{S}^n\ \mid \ \mathcal{K}^* (A)\in \mathcal{S}^n _+ \right\} \\
     & = \left\{A\in \mathcal{S}^n\ \mid \ \mathrm{Diag}(A {e_{n} } )-A\in \mathcal{S}^n _+ \right\},
\end{array}
$$%(By\ \cite[eq\ (1)]{Gower})
{where the second equality is due to the fact that $ \mathcal{K}(\mathcal{S}^n_+)=\mathcal{E}^n,$
as in \cite[Page 1163, Line 20]{psd14} and \cite[Page 975, Line 37]{psd}.} Moreover, there is $\mathrm{Diag}(-Z_1 Z_1 ^{\top} e_n )-(-Z_1 Z_1 ^{\top})=Z_1 Z_1 ^{\top}$ (Because $Z_1 \in \mathrm{null}(e_{n}^{\top})$). It is obvious that $Z_1 Z_1 ^{\top}\in \mathcal{S}^n _+$. Consequently, $-Z_1 Z_1 ^{\top}\in (\mathcal{E}^{n})^{*}$.

By the definition of {the exposing vector}, $-Z_1 Z_1 ^{\top}  $ exposes face($\overline{D} ,\mathcal{E}^n $). The proof is completed.
\end{proof}

%In this section, we will derive the explicit form of $\mathrm{face}(F,\mathcal{E}^{n+1})$. Since the faces of the EDM are exposed \cite[Section 2.1, Page 3, Line 22]{psd14}, {the only thing we need to do is to} find the exposing vector. To that end, we need the concept of {projected Gram matrix} and its properties. {Intuitively, we divided the process of deriving the face into the following steps, which are given below.}
%    \begin{lemma}\label{hyperplane}{\rm\cite[Theorem 5.8]{Alfakih_M}}
%    Let $\mathcal{H}=\{D_1\in \mathcal{E}^n: \ \mathrm{trace}(Z_1 ^T D_1 Z_1)=0\} $ and $F$ be a proper face whose relative interior containing $D$, then $F=\mathcal{E}^n\cap \mathcal{H}$, where $Z_1$ is defined by (\ref{galmat}) in Step 1.
%    \end{lemma}

\textbf{(2) Exposing vector of $\face(\Omega,\mathcal{E}^{n+1})$.}

%{By Lemma \ref{hyperplane}, we can find the hyperplane in which a proper face $F$ containing the matrix $D$ lies. The next thing we need to prove is that the face $F$ is actually the minimal face containing $D$, i.e., $F=\face(D,\mathcal{E}^n)$. Moreover, we need to derive the exposing vector of face($D,\mathcal{E}^n $) by the matrix $Z_1$ defined by (\ref{galmat}). The following proposition gives the answers to both questions.}

%{\item[\textbf{Step 5.}] Extend $-Z_1 Z_1 ^T  $ obtained by Proposition \ref{exp-vector} to an exposing vector for $\mathrm{face}(\Omega ,\mathcal{E}^{n+1})  $.}

%Next, we extend $-Z_1 Z_1 ^T  $ to an exposing vector for $\mathrm{face}(\Omega ,\mathcal{E}^{n+1})  $.
In order to {derive} the exposing vector of $\mathrm{face}(\Omega ,\mathcal{E}^{n+1})$, as in \cite[Section 2.3]{psd14}, we consider an undirected graph $G = (V_g, E) $ with vertex set $V_g = \{1, \ \ldots, \ n, n+1 \}$ and edge set $E = \{ij: \ 1\le  i \le j \leq n \} $. Define projection map $\mathcal{P}$ : ${\mathcal{S}^{m}\rightarrow \mathbb{R}^{n(n+1)/2}}$ by
$$
\mathcal{P}(A)=(A_{ij})_{ij\in E},
$$
i.e., $\mathcal {P}(A)$ is a vector of all entries of $A$ indexed by E. The adjoint of $\mathcal{P}$ is $\mathcal{P}^{*}:\mathbb{R}^{n(n+1)/2}\rightarrow \mathcal{S}^{n+1}$ defined by
$$(\mathcal{P}^{*}(y))_{ij}=\left\{\begin{array}{ll}
y_{ij}/2, & \mathrm{i}\mathrm{f}\ i,j\in E\ \mathrm{or}\ {j,i}\in E,\\
0, & \mathrm{o}\mathrm{t}\mathrm{h}\mathrm{e}\mathrm{r}\mathrm{w}\mathrm{i}\mathrm{s}\mathrm{e}.
\end{array}\right.$$

We have the following result to characterize the exposing vector of $\mathrm{face}(\Omega ,\mathcal{E}^{n+1})  $.
\begin{theorem}\label{face}
Let $v:=\mathrm{vec}(-Z_1 Z_1 ^{\top})\in \mathbb{R}^{n(n+1)/2}$, where $Z_1$ is defined by {\rm(\ref{galmat})}. Let $H=\mathcal{P}^{*}(v)$, then $H$ exposes $\mathrm{face}(\Omega ,\mathcal{E}^{n+1})  $.
\end{theorem}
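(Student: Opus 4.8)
The plan is to verify directly the two defining properties of an exposing vector from Definition \ref{expface}(iii): that $H\in(\mathcal{E}^{n+1})^{*}$ and that $\face(\Omega,\mathcal{E}^{n+1})=\mathcal{E}^{n+1}\cap H^{\bot}$. First I would write $H$ explicitly. From the definitions of $\mathcal{P}^{*}$, $\mathrm{vec}$ and $E=\{ij:1\le i\le j\le n\}$, the vector $v$ is placed back into the leading $n\times n$ block, giving $H=\tfrac12\left(\begin{smallmatrix}-Z_1Z_1^{\top}&0\\0&0\end{smallmatrix}\right)$, whose last row and column vanish. (The harmless factor $\tfrac12$ only rescales $H$ and affects none of the arguments below, since exposing vectors are defined up to positive scaling.) Because $Z_1\in\mathrm{null}(e_n^{\top})$, we get $Z_1Z_1^{\top}e_n=0$ and hence $He_{n+1}=0$, so $\mathrm{Diag}(He_{n+1})-H=-H=\tfrac12\left(\begin{smallmatrix}Z_1Z_1^{\top}&0\\0&0\end{smallmatrix}\right)\succeq0$. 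By the description $(\mathcal{E}^{n+1})^{*}=\{A:\mathrm{Diag}(Ae_{n+1})-A\in\mathcal{S}^{n+1}_{+}\}$ established in the proof of Proposition \ref{exp-vector}, this shows $H\in(\mathcal{E}^{n+1})^{*}$; consequently $\mathcal{E}^{n+1}\cap H^{\bot}$ is an exposed face of $\mathcal{E}^{n+1}$.

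Second, using the block form of $H$, for every $D\in\mathcal{S}^{n+1}$ one has $\la H,D\ra=\tfrac12\la-Z_1Z_1^{\top},D_{[n]}\ra$, where $D_{[n]}$ is the leading $n\times n$ principal block of $D$. For $D\in\mathcal{E}^{n+1}$ this block satisfies $D_{[n]}\in\mathcal{E}^{n}$, and for $D\in\Omega$ it equals $\overline{D}$; since Proposition \ref{exp-vector} says $-Z_1Z_1^{\top}$ exposes $\face(\overline{D},\mathcal{E}^{n})$ and $\overline{D}$ lies in that face, we get $\la-Z_1Z_1^{\top},\overline{D}\ra=0$. Hence $\Omega\subseteq\mathcal{E}^{n+1}\cap H^{\bot}$, and as the right-hand side is a face containing $\Omega$, minimality of $\face(\Omega,\mathcal{E}^{n+1})$ yields $\face(\Omega,\mathcal{E}^{n+1})\subseteq\mathcal{E}^{n+1}\cap H^{\bot}$.

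Third, for the reverse inclusion I would produce a single point of $\Omega$ whose own minimal face already fills out $\mathcal{E}^{n+1}\cap H^{\bot}$. Let $\hat{D}\in\Omega$ be the EDM of $\mathbf{x}_1,\dots,\mathbf{x}_n$ together with an extra point $\hat{\mathbf{x}}_{n+1}$ chosen off the affine hull of the sensors, so that the embedding dimension of $\hat{D}$ exceeds that of $\overline{D}$ by exactly one. Writing $\hat{P}$ for the coordinate matrix of $\hat{D}$ and using $P^{\top}Z_1=0$ and $e_n^{\top}Z_1=0$ (with $P$ the coordinate matrix of $\overline{D}$), a short computation gives $\left[\begin{smallmatrix}\hat{P}^{\top}\\ e_{n+1}^{\top}\end{smallmatrix}\right]\left(\begin{smallmatrix}Z_1\\0\end{smallmatrix}\right)=0$, so $\hat{Z}:=\left(\begin{smallmatrix}Z_1\\0\end{smallmatrix}\right)$ lies in the Gale subspace of $\hat{D}$; a dimension count (that subspace has dimension $(n+1)-(r+1)-1=n-r-1$, the number of columns of $Z_1$) shows $\hat{Z}$ is in fact a Gale matrix of $\hat{D}$. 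Applying Proposition \ref{exp-vector} to $\hat{D}$ with the Gale matrix $\hat{Z}$, the vector $-\hat{Z}\hat{Z}^{\top}$, which is a positive multiple of $H$, exposes $\face(\hat{D},\mathcal{E}^{n+1})$; therefore $\face(\hat{D},\mathcal{E}^{n+1})=\mathcal{E}^{n+1}\cap H^{\bot}$. Since $\hat{D}\in\Omega$ forces $\face(\hat{D},\mathcal{E}^{n+1})\subseteq\face(\Omega,\mathcal{E}^{n+1})$, we conclude $\mathcal{E}^{n+1}\cap H^{\bot}\subseteq\face(\Omega,\mathcal{E}^{n+1})$. Combined with the second paragraph, $\face(\Omega,\mathcal{E}^{n+1})=\mathcal{E}^{n+1}\cap H^{\bot}$, and together with $H\in(\mathcal{E}^{n+1})^{*}$ this is exactly the statement that $H$ exposes $\face(\Omega,\mathcal{E}^{n+1})$.

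The step I expect to be the main obstacle is the last construction: confirming that the padded matrix $\hat{Z}$ is a \emph{full} Gale matrix of $\hat{D}$ rather than spanning only a proper subspace of its Gale subspace. This is precisely where the general-position choice of $\hat{\mathbf{x}}_{n+1}$ enters, guaranteeing the embedding dimension rises by exactly one so that the column count of $\hat{Z}$ matches the dimension of the Gale subspace. The remaining steps are comparatively routine, resting only on the explicit block form of $H$, the dual-cone characterization of $(\mathcal{E}^{n+1})^{*}$, and the exposing property of $-Z_1Z_1^{\top}$ already proved in Proposition \ref{exp-vector}.
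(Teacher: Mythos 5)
Your proof is correct, but it takes a genuinely different route from the paper's. The paper disposes of the theorem in a few lines by invoking the coordinate-shadow machinery of \cite{psd14}: Lemma 4.11 there converts the exposing vector $-Z_1Z_1^{\top}/2$ of $\face(\overline{D},\mathcal{E}^{n})$ into an exposing vector $v$ of $\face(a,\mathcal{P}(\mathcal{E}^{n+1}))$ with $a=\mathrm{vec}(\overline{D})$, and Theorem 4.1 there asserts that $v$ exposes the minimal face of the projected point if and only if $\mathcal{P}^{*}(v)$ exposes the minimal face of the fiber $\Omega=\{D\in\mathcal{E}^{n+1}:\mathcal{P}(D)=a\}$ --- which is exactly the claim. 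You instead verify the definition of an exposing vector by hand: $H\in(\mathcal{E}^{n+1})^{*}$ via the block form and the dual-cone characterization, the inclusion $\face(\Omega,\mathcal{E}^{n+1})\subseteq\mathcal{E}^{n+1}\cap H^{\bot}$ by minimality, and the reverse inclusion by exhibiting a generic completion $\hat{D}\in\Omega$ (source placed off the affine hull of the sensors) whose Gale matrix is the zero-padded $\hat{Z}=\bigl(\begin{smallmatrix}Z_1\\0\end{smallmatrix}\bigr)$, confirmed by the dimension count $(n+1)-(r+1)-1=n-r-1$. That construction is precisely the content that \cite[Theorem 4.1]{psd14} packages abstractly, so your argument is self-contained modulo the lemmas already in the paper and arguably more transparent about \emph{why} the padded matrix is the right exposing vector, at the cost of being longer and less portable to other patterns of known entries. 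Two small points to tidy up: (i) you ``apply Proposition \ref{exp-vector} to $\hat{D}$ with the Gale matrix $\hat{Z}$,'' but that proposition is stated only for the specific $Z_1=VU$ built from the projected Gram matrix; what you actually use is its proof --- Lemma \ref{relint} together with Lemma \ref{hyperplane}, which holds for an arbitrary Gale matrix, plus the computation showing $-\hat{Z}\hat{Z}^{\top}\in(\mathcal{E}^{n+1})^{*}$ --- so cite those directly; (ii) the dimension count tacitly assumes the sensors affinely span $\mathbb{R}^{r}$ and that $n\ge r+2$ (so that $Z_1$ is nonempty and the relevant faces are proper), assumptions that are implicit in the paper as well.
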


\begin{proof}
Let $a=\mathrm{vec}(\overline{D})\in \mathbb{R}^{n(n+1)/2}_+$. By \cite[Lemma 4.11]{psd14}, we have
$$
-Z_1 Z_1 ^{\top}{/2} \ \mathrm{exposes} \ \mathrm{face}(\overline{D},\mathcal{E}^{n})\ \Leftrightarrow v \ \mathrm{exposes} \ {\mathrm{face}(a,\mathrm{vec}(\mathcal{E}^n))}.
$$%\mathbb{R}_+^{n(n+1)/2}
{Moreover, we have the following formulation
$$
\mathrm{vec}(\mathcal{E}^n)=\mathcal{P}(\mathcal{E}^{n+1}).
$$
} The feasible domain defined by formula (\ref{ft}) can be rewritten as
$$
F =\{D\in \mathcal{E}^{n+1}\mid \ \mathcal{P}(D)=a\}.
$$
%and
%where
Then by \cite[Theorem 4.1]{psd14}, we have
$$v \ \mathrm{exposes} \ \mathrm{face}(a,{\mathcal{P}(\mathcal{E}^{n+1})})\Leftrightarrow \ \mathcal{P}^{*}(v)\ \mathrm{exposes} \ \mathrm{face}(\Omega,\mathcal{E}^{n+1}),$$
where
$$\mathcal{P}^{*}(v)=\left[
                                      \begin{array}{cc}
                                        -Z_1 Z_1 ^{\top}/2 & 0 \\
                                        0 & 0 \\
                                      \end{array}
                                    \right].
$$
In other words, $H=\mathcal{P}^{*}(v)$ exposes $\mathrm{face}(\Omega ,\mathcal{E}^{n+1})  $. This completes the proof.
\end{proof}

Through $H$, we {can} perform "null space" representation of face($\Omega,\mathcal{E}^{n+1}$). In other words,
\be\label{face-eq}
  \face(F,\mathcal{E}^{n+1})=\mathcal{E}^{n+1}\cap H^{\bot}.
\ee
where $H^{\bot}$ is given by
\be\label{expH}
H^{\bot}:=\{D\in\mathcal{S}^{n+1}\mid\ \la D,H\ra=0\}.
\ee
With model (\ref{nedmf}), (\ref{face-eq}) and (\ref{expH}), we obtain the equivalent problem of model (\ref{nedmf})
\begin{equation}\label{EDMFR}
\begin{split}
  \min_{D\in \mathcal{S}^{n+1}} &\ \frac{1}{2}\|D -\Delta\|_F^2 {=:f(D)}\\
  \text{s.t.} &\ \mathrm{rank}({J_{n+1} DJ_{n+1}})\leq r \\
    &\ D\in \mathcal{E}^{n+1}\cap H^{\bot},
\end{split}
\end{equation}
which is exactly EDMFR (\ref{EDMFR_former}). Compare (\ref{EDMFR_former}) with (\ref{nedm}), we recast the $n(n-1)/2$ equality constraints in (\ref{nedm}c) by a simple linear constraint $\la D,H\ra=0$, which means that we consider the optimization problem restricted on $\face(\Omega,\mathcal{E}^{n+1})$. Here we summarize the calculation of exposing vector $H$ in Algorithm \ref{alg-exp} followed by a toy example.
%\begin{equation}\label{EDMFR}
%\begin{split}
%  \min_{X\in \mathcal{S}^{m}} &\ \frac{1}{2}\|X -\Delta\|^2 \\
%  \text{s.t.} &\ {-X\in\mathcal{K}^m_+} \\
%    &\ {\rank(JXJ)\le r}\\
%    &\ \diag(X)=0\\
%    &\ \la X,H\ra=0.
%\end{split}
%\end{equation}
%Here, we will present a framework for calculating the exposing vector $H$.

\begin{algorithm}
\caption{Calculate Exposing Vector $H$}\label{alg-exp}
\hspace*{0.02in} {\bf Input:} %算法的输入， \hspace*{0.02in}用来控制位置，同时利用 \\ 进行换行
$\bfx_1,...,\bfx_n\in\mathbb{R}^r$.
\begin{algorithmic}[1]
%\Require Dissimilarity matrix $\Delta\in\mathcal{S}^m$
%\Ensure The exposing vector $H$

\STATE Calculate $\overline{D}\in\mathcal{S}^n$ by $\overline{D}=\|\bfx_i-\bfx_j\|_2^2,\ i,j=1,...,n$.
\STATE Calculate $Y$ by (\ref{proGram}).
%\State $W\Lambda W^{\top}\Leftarrow Y$
%\State $U\Leftarrow W(:,r+1:n-1)$
\STATE Calculate $Z_1$ by (\ref{galmat}).%Let $Z_1= VU$, where $V$ is calculated by (\ref{matV}) and $U$ is the orthonormal bases of null($Y$).
\STATE Let $H= \left[
                                             \begin{array}{cc}
                                               -Z_1Z_1^{\top}/2 & 0 \\
                                               0 & 0 \\
                                             \end{array}
                                           \right]$.
\end{algorithmic}
\hspace*{0.02in} {\bf Output:} %算法的结果输出
$H\in\mathcal{S}^{n+1}$.
\end{algorithm}

%We end this section by a toy example to show how to derive the exposing vector $H$ based on Algorithm \ref{alg-exp}.

\begin{example}\label{XH=0}
The data comes from {\rm\cite[Example 1]{ex1}}. We consider the case where $n =5$, i.e., there are five sensor points. We have known sensors points $\mathbf{x}_i$, distributed in two dimensions, whose coordinate matrix is defined by
$$
\begin{array}{rl}
  P&:=\left[\mathbf{x}_1 \ \mathbf{x}_2 \ \mathbf{x}_3 \ \mathbf{x}_4 \ \mathbf{x}_5\right] \\
    & \ =\left[
         \begin{array}{ccccc}
           6 & 0 & 5 & 1 & 3 \\
           4 & -10 & -3 & -4 & -3 \\
         \end{array}
       \right].
\end{array}
$$
The true coordinate of the source is $\mathbf{x}_6=(-2,3)^{\top}$. %The EDM of the five sensors and the EDM $D\in\mathcal{S}^6$ of the sensors and the source are as follows.
To calculate exposing vector $H$, we need the following steps.
\bit
\item[1.] Calculate $\overline{D}$ by $\overline{D}=\|\bfx_i-\bfx_j\|_2^2$. That is:
    $$
    \overline{D}=\left[
    \begin{array}{ccccc}
            0 & 232 &  50 &  89 &  58  \\
  232    &    0  & 74 & 37 & 58\\
   50& 74&        0  & 17&    4   \\
   89&   37& 17&        0  &  5  \\
   58&   58&    4&  5&         0
    \end{array}
  \right].
  $$
\item[2.] Calculate the projected Gram matrix of $\overline{D}$ by
 $$
 Y=-V^{\top}\overline{D}V/2=\left[
              \begin{array}{cccc}
             6.8760&	0.2266&	1.8570&	0.6432\\
             0.2266&	2.1796&	0.4886&	1.1029\\
             1.8570&	0.4886&	2.9208&	1.3554\\
             0.6432&	1.1029&	1.3554&	2.0261
              \end{array}
            \right],
 $$
 where $V$ is calculated by {\rm(\ref{proGram})}.
\item[3.] %Eigenvalue decomposition is performed for the matrix $Y$.
Calculate $U$ by
$$
Y=:W\Lambda W^{\top}.
$$
Then $U\in\mathbb{R}^{4\times 2}$ is the columns in $W$ corresponding to the zero eigenvalues. That is,
$$
U=\left[
    \begin{array}{cc}
0.2145&	0.0648\\
0.7041&	-0.4482\\
-0.6768&	-0.4313\\
0.0126&	0.7803
    \end{array}
  \right].
$$
Calculate $Z_1$ by {\rm(\ref{galmat})}:
$$
Z_1=VU=\left[
         \begin{array}{cc}
-0.1138&	0.0153\\
0.1794&	0.0696\\
0.6689&	-0.4435\\
-0.7120&	-0.4265\\
-0.0226&	0.7851
         \end{array}
       \right].
$$
\item[4.] {Exposing vector} $H$ is given by
$$
H=\left[
    \begin{array}{cc}
      -Z_1Z_1^{\top}/2 & 0 \\
      0 & 0 \\
    \end{array}
  \right]
=\left[
                \begin{array}{cccccc}
-0.0066&	0.0097&	0.0415&	-0.0372&	-0.0073&	0\\
0.0097&	-0.0185&	-0.0446&	0.0787&	-0.0253&	0\\
0.0415&	-0.0446&	-0.3221&	0.1436&	0.1816&	0\\
-0.0372&	0.0787&	0.1436&	-0.3444&	0.1594&	0\\
-0.0073&	-0.0253&	0.1816&	0.1594&	-0.3084&	0\\
0&	0&	0	&0&	0	&0
                \end{array}
              \right].
$$
\eit
%Therefore, $H$ is the exposing vector of $\face(\Omega,\mathcal{E}^6)$.
Next, we will show that EDM matrix $D\in\mathcal{S}^6$ between sensors $\bfx_1,...,\bfx_5$ and source $\bfx_6$ is indeed lying on $\face(\Omega,\mathcal{E}^6)$. In fact, $D$ is given by
 $$
 D=\left[
    \begin{array}{cccccc}
            0 & 232 &  50 &  89 &  58 &65 \\
  232    &    0  & 74 & 37 & 58&173\\
   50& 74&        0  & 17&    4 &85  \\
   89&   37& 17&        0  &  5 &58 \\
   58&   58&    4&  5&         0&61\\
   65 & {173} & {85} & {58} & {61}&0
    \end{array}
  \right]\in\mathcal{S}^6
 $$
and $\la D,H\ra=-1.7764\times 10^{-14}\approx 0$. Therefore, $H$ is indeed the exposing vector of $\face(\Omega,\mathcal{E}^6)$.
\end{example}

\begin{remark}
To the best of our knowledge, we are the first to use facial reduction technique in EDM model. Based on {\rm(\ref{EDMFR_former})}, we solve {the} EDM model for SSLP on a face of the EDM cone, greatly reducing the number of constraints. %It is worth mentioning that, compared with the linear constraints in SDP model {\rm(\ref{sdpsslp})}, the constraints in EDM model {\rm(\ref{nedm})} are simpler, which is also the main reason we use EDM model.% We denote the EDM model based on facial Reduction technique as \textrm{EDMFR}.
\end{remark}
%{
%\begin{remark}
%For the case where $f$ is Kruskal's minimization function, i.e.,
%\begin{equation}\label{kruskal}
%\begin{split}
%  \min_{D\in \mathcal{S}^{m}} &\ \frac{1}{2}\|\sqrt{D} -\sqrt{\Delta}\|_F^2\\
%  \text{s.t.} &\ -D\in \mathcal{K}^m_+,\ \diag(D)=0 \\
%    &\ \rank({J_m DJ_m})\le r\\
%    &\ D_{ij}=\|\bfx_i-\bfx_j\|_2^2,\ 1 \le i < j \le n.
%\end{split}
%\end{equation}
%Because of the nonconvexity and nonsmoothness of $f$, the facial reduction technique cannot work here. See \cite{qi2018} for solving problem (\ref{kruskal}).
%\end{remark}
%}
%That is, we get the closed expression for face($F,\mathcal{E}^{n+1}$).
%However, the "column space" representation is more useful in the background of our problem because it leads to dimension reduction.%, numerical stability, and strong duality.

\section{Constraint Nondegeneracy for Convex Case}\label{secCN}
Note that EDMFR (\ref{EDMFR_former}) is a nonconvex optimization problem due to the rank constraint in (\ref{EDMFR-2}). A popular way to deal with the nonconvexity is to simply drop the rank constraint and one will reach a convex EDM model as follows
\begin{equation}\label{convex}
\begin{split}
  \min_{D\in \mathcal{S}^{n+1}} &\ \frac{1}{2}\|D -\Delta\|_F^2 \\
  \text{s.t.} &\ {-D\in\mathcal{K}^{n+1}_+} \\
    &\ \mathcal{B}(D)=0.
\end{split}
\end{equation}
Here linear operator  $\mtB:\ \mathcal{S}^{n+1}\rightarrow\ \mathbb{R}^{n+2}$ is defined by
$$
\mtB(D):=\left[
           \begin{array}{c}
             {\diag(D)} \\
              \la {D},H\ra\\
           \end{array}
         \right]
$$
with adjoint operator $\mtB^*:\ \mathbb{R}^{n+2}\rightarrow\ \mathcal{S}^{n+1}$ defined by $\mtB^*(y):=\Diag(y_{1:n+1})+y_{n+2}H$.

Problem (\ref{convex}) is essentially in the same form as {the} EDM model in \cite{semi}, where $\mathcal{B}(D)$ is replaced by $\diag(D)$. Therefore, to solve (\ref{convex}), one can also apply the semismooth Newton's method, as done in \cite{semi}. However, to guarantee the quadratic convergence and the nonsingularity of the generalized Jacobian of the dual problem, one needs constraint nondegeneracy property, which is an essential property behind the good performance of semismooth Newton's method. Therefore, in this section, we will show that constraint nondegeneracy indeed holds for the constraints in (\ref{convex}). Below we first state the definition of constraint nondegeneracy with respect to the constraints in (\ref{convex}). More details about this property for general constraints can be found in \cite{CN, sunCN}.
%Constraint nondegeneracy plays an important role in the nonsingularity of Clarke's Jacobian of the Karush-Kuhn-Tucker (KKT) system. In nonlinear SDP \cite{sunCN}, Sun showed that the KKT point is a strong regular solution under constraint nondegeneracy. It is proved by Bai et al. \cite[Theorem 3.9]{CN} that each Clark's Jacobian matrix is nonsingular under constraint nondegeneracy in the EDM model for SNL, which guarantees the quadratic convergence of Newton's method. Our goal is that, similar to Bai et al. \cite{CN}, we prove constraint nondegeneracy of the convex relaxation of the problem (\ref{EDMFR}):

\begin{definition}{\rm\cite[Definition 3.2]{CN}}
 Constraint nondegeneracy holds at a feasible point $D$ with respect to the constraints in (\ref{convex}), if the following holds
\begin{equation}\label{jianbing}
\mathcal{B}\left(\mathrm{lin}(\mathcal{T}_{\mathcal{K}_+^{n+1}}(D))\right)=\mathbb{R}^{n+2},
\end{equation}
where $\mathrm{lin}(\mathcal{T}_{\mathcal{K}_+^{n+1}}(D))$ denotes the largest linear subspace contained in the tangent cone of {$\mathcal{K}^{n+1}_+$} at $D$.
\end{definition}

%In the following, we will prove that constraint nondegeneracy holds at {any} feasible point $X\in\mathcal{K}_+^m$. To derive the constraint nondegeneracy, we may obtain a explicit form of $\mathrm{lin}(\mathcal{T}_{\mathcal{K}_+^m}(X))$. To see this,

To show constraint nondegeneracy, we need the following property.
\begin{proposition}\label{prop2}
For $A\in \mathcal{S}^{n+1},\ {{\bf a}\in\mathbb{R}^n,}\ c\in\mathbb{R}$ and
{$$
B:=\left[
     \begin{array}{cc}
       0 & {\bf a} \\
       {{\bfa}}^{\top} & {c} \\
     \end{array}
   \right]\in\mathcal{S}^{n+1},
$$}
we have
$$
\mathrm{trace}(AB)={u_{n+1}^{\top}}A\left[
          \begin{array}{c}
            2{{\bfa}}\\
            {c}
          \end{array}
        \right],
$$
{where $u_{n+1} $ represent the $(n+1)$-th column of $(n+1)\times (n+1)$ identity matrix $I_{n+1}$.}
%where $e_m^i$ denotes the unit vector whose $i$-th element is one.
\end{proposition}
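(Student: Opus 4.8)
The plan is to verify the identity by a direct coordinate computation, since the statement is a purely algebraic identity about the block-structured matrix $B$. First I would rewrite the trace as a Frobenius inner product: because both $A$ and $B$ are symmetric, $\mathrm{trace}(AB)=\la A,B\ra=\sum_{i,j=1}^{n+1}A_{ij}B_{ij}$. This reduces the task to reading off the nonzero entries of $B$ from its block form.

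Next I would exploit the structure of $B$. Its only nonzero entries are $B_{i,n+1}=B_{n+1,i}=a_i$ for $i=1,\dots,n$, coming from the off-diagonal blocks $\bfa$ and $\bfa^{\top}$, together with $B_{n+1,n+1}=c$; the upper-left $n\times n$ block vanishes. Substituting these into the inner-product sum leaves
\[
\mathrm{trace}(AB)=\sum_{i=1}^n A_{i,n+1}a_i+\sum_{i=1}^n A_{n+1,i}a_i+A_{n+1,n+1}\,c,
\]
and the symmetry $A_{i,n+1}=A_{n+1,i}$ collapses the two sums into one, producing the factor of $2$ and yielding $2\sum_{i=1}^n A_{n+1,i}a_i+A_{n+1,n+1}c$.

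Finally I would identify the right-hand side with this expression. The row vector $u_{n+1}^{\top}A$ is exactly the $(n+1)$-th row of $A$, namely $(A_{n+1,1},\dots,A_{n+1,n+1})$, so multiplying it by the column $\left[\begin{array}{c}2\bfa\\ c\end{array}\right]$ gives $2\sum_{i=1}^n A_{n+1,i}a_i+A_{n+1,n+1}c$, which matches the left-hand side. Since this is a coordinate identity, there is no genuine analytic obstacle; the only point requiring care is the index bookkeeping and the use of the symmetry of $A$, which is precisely what accounts for both the last row and the last column of $B$ contributing and hence for the appearance of the factor of $2$.
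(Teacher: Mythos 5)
Your proof is correct and follows essentially the same route as the paper's: expand $\mathrm{trace}(AB)=\sum_{i,j}A_{ij}B_{ij}$, keep only the entries of $B$ in its last row and column, and use the symmetry of $A$ to merge the two sums into the factor of $2$. No substantive difference.
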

\begin{proof}
With simple calculations, there is
\begin{equation*}
\begin{split}
                     \mathrm{trace}(AB)& = \sum_{i,j}A_{ij}B_{ij} \\
                      & = \sum_{i=1}^{n}A_{i,n+1}B_{i,n+1}+ \sum_{j=1}^{n}A_{n+1,j}B_{n+1,j}+A_{n+1,n+1}{c}\\
                      & = \sum_{j=1}^{n}A_{n+1,j}(2{{{\bfa}}_j})+A_{n+1,n+1}{c}\\
                      & = {u_{n+1}^{\top}}A\left[
          \begin{array}{c}
            2{\bfa} \\
            {c}
          \end{array}
        \right].
\end{split}
\end{equation*}
This completes the proof.
\end{proof}

\begin{theorem}
Constraint nondegeneracy {\rm(\ref{jianbing})} holds for each feasible point $D$ of {\rm(\ref{convex})}.
\end{theorem}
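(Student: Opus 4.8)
The plan is to recast constraint nondegeneracy \eqref{jianbing} as a surjectivity statement and then verify it through its adjoint. Abbreviating $L:=\mathrm{lin}(\mathcal{T}_{\mathcal{K}_+^{n+1}}(D))$, the identity $\mathcal{B}(L)=\mathbb{R}^{n+2}$ holds if and only if the only pair $(\eta,s)\in\mathbb{R}^{n+1}\times\mathbb{R}$ whose image $\mathcal{B}^*(\eta,s)=\Diag(\eta)+sH$ lies in $L^{\perp}$ is $(\eta,s)=0$. So the first task is to pin down $L$, and hence $L^{\perp}$, in closed form, after which the theorem reduces to the two scalar/vector statements $\eta=0$ and $s=0$.

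To characterize $L$, I would exploit that $-D$ lies on the boundary of $\mathcal{K}_+^{n+1}$ and that $\mathcal{K}_+^{n+1}$ is a positive semidefinite cone carried onto the subspace $e_{n+1}^{\perp}$ by the centering matrix $J_{n+1}$ (equivalently by a $V$ with $V^{\top}V=I$, $VV^{\top}=J_{n+1}$, $V^{\top}e=0$, as in \eqref{proGram}). Under this identification the projected Gram matrix of $D$ is the PSD base point, and since the tangent cone of a PSD cone at a matrix is governed by that matrix's null space, one obtains $L=\{M\in\mathcal{S}^{n+1}\mid W^{\top}MW=0\}$, where the columns of $W$ span the null space of the projected Gram of $D$ lifted by $V$ (so that $W^{\top}e=0$). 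Taking orthogonal complements then gives $L^{\perp}=\{WCW^{\top}\mid C\ \text{symmetric}\}$.

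With this in hand, suppose $\Diag(\eta)+sH=WCW^{\top}$. Right-multiplying by $e_{n+1}$ and using $W^{\top}e=0$ (so $WCW^{\top}e=0$) together with $He=0$ — which holds because the columns of the Gale matrix $Z_1$ sum to zero, i.e. $Z_1\in\mathrm{null}(e_n^{\top})$ — collapses the identity to $\eta=0$. The source (last) diagonal entry can be read off cleanly by pairing against a bordered test matrix and applying Proposition \ref{prop2}, which converts the trace into a single inner product with the last row of $\Diag(\eta)+sH$.

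The remaining and genuinely delicate step is to force $s=0$, that is, to show $H\notin L^{\perp}$, or equivalently to exhibit $M\in L$ with $\langle M,H\rangle\neq 0$. This is the main obstacle, because it requires separating the Gale/null-space structure of the full (source-augmented) configuration from that of the sensors alone, and the natural bordered perturbations — which encode the freely placeable source and sit inside $L$ — are annihilated by $H$, since $H$ has a zero last row and column. I would therefore build the witness $M$ from the sensor block itself, using the explicit block form $H=\left[\begin{smallmatrix}-Z_1Z_1^{\top}/2 & 0\\ 0 & 0\end{smallmatrix}\right]$ and the fact (Lemma \ref{relint}) that $D$ sits in the relative interior of the relevant face, and then verify through Proposition \ref{prop2} that the constructed $M$ satisfies $W^{\top}MW=0$ while $\langle M,H\rangle\neq 0$. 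Combining $\eta=0$ with $s=0$ yields $\mathcal{B}^*(\eta,s)\in L^{\perp}\Rightarrow(\eta,s)=0$, which is exactly the desired nondegeneracy.
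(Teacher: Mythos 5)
Your dual reformulation is legitimate and is a genuinely different route from the paper's: constraint nondegeneracy $\mathcal{B}(\mathrm{lin}(\mathcal{T}_{\mathcal{K}_+^{n+1}}(D)))=\mathbb{R}^{n+2}$ is indeed equivalent to the implication $\mathcal{B}^*(\eta,s)\in L^{\perp}\Rightarrow(\eta,s)=0$; your closed forms $L=\{M\mid W^{\top}MW=0\}$ and $L^{\perp}=\{WCW^{\top}\}$ with $W^{\top}e_{n+1}=0$ agree with the paper's description (\ref{linT}) after conjugation by the Householder matrix $Q$; and the step forcing $\eta=0$ (right-multiply by $e_{n+1}$ and use $He_{n+1}=0$, $W^{\top}e_{n+1}=0$) is correct. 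The paper instead argues primally, writing down for each ${\bf b}\in\mathbb{R}^{n+2}$ an explicit preimage parametrized by $({\bf a},c,k)$ and showing $\mathcal{B}(A)=M[{\bf a}^{\top},c,k]^{\top}$ with $M$ invertible. The two are mirror images of one another; but note that the $\eta=0$ half is the easy half -- it uses nothing about $H$ beyond $He_{n+1}=0$ and is essentially the nondegeneracy of $\diag(D)=0$ already established in \cite{semi}.

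The genuine gap is the step you yourself flag as ``the main obstacle'': proving $s=0$, i.e.\ exhibiting $M\in L$ with $\langle M,H\rangle\neq 0$. You describe what you \emph{would} do (``build the witness from the sensor block \dots and then verify'') but no witness is constructed and nothing is verified, so the only nontrivial content of the theorem is missing. This step cannot be obtained by soft structural reasoning, because whether $H\in L^{\perp}$ genuinely depends on the feasible point: at $D=0$ (which satisfies $\mathcal{B}(D)=0$ and $-D\in\mathcal{K}_+^{n+1}$) one has $l=0$, $L^{\perp}=\{M\mid \mathrm{col}(M)\subseteq e_{n+1}^{\perp}\}$, and $He_{n+1}=0$ gives $H\in L^{\perp}$, so the implication fails there with $(\eta,s)=(0,1)$. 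Any correct argument must therefore use the rank index $l=\rank(\overline{Z}_1)$ of the specific point $D$. This is exactly where the paper does the work: its witness is the element of $L$ obtained by taking $\Sigma_1=I_l$, $\Sigma_{12}=0$, ${\bf a}=0$, $a_0=0$ in (\ref{linT}), for which Proposition \ref{prop2} and $e_{n+1}^{\top}H=0$ give $\langle A,H\rangle=\sum_{i=1}^{l}(QHQ)_{ii}=:l_q$, and the nonvanishing of $l_q$ is deduced from $-QHQ\succeq 0$ of rank $n-r-1$ together with the fact that a positive semidefinite matrix with a vanishing diagonal entry has the corresponding row and column equal to zero. Until you produce such an $M$ and prove $\langle M,H\rangle\neq 0$ at every feasible $D$ under consideration, the proof is incomplete.
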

\begin{proof}
We divide the proof into two steps.

In step one, we recall the explicit form of $\mathrm{lin}(\mathcal{T}_{\mathcal{K}_+^{n+1}}(D))$ whose result can be obtained from \cite{semi}.
Let $D\in\mathcal{K}_+^{n+1}$ have the following decomposition:
\begin{equation}\label{QAQ}
D:=Q\left[
                   \begin{array}{cc}
                     \overline{Z}_1 & {\bf\overline{z}}_2 \\
                     {\bf\overline{z}}_2^{\top} & \overline{z}_0 \\
                   \end{array}
                 \right]Q,\ \overline{Z}_1\in\mathcal{S}^{n},\ {\bf\overline{z}}_2\in\mathbb{R}^{n},\ \overline{z}_0\in\mathbb{R},
\end{equation}
where $Q\in\mathcal{S}^{n+1}$ is the Householder matrix satisfying $Q^2=I_{n+1}$:
$$
Q:=I_{n+1} - \frac{1}{{n+1}+\sqrt{n+1}}{\bfy\bfy}^{\top},\ {\bfy}=(1,...,1,\sqrt{n+1}+1)\in\mathbb{R}^{n+1}.
$$
Let $l:=\rank(\overline{Z}_1),\ \overline{\lambda}_1\ge\overline{\lambda}_2\ge\cdots\overline{\lambda}_l>0$ be the positive eigenvalues of $\overline{Z}_1$ and
\be\label{z1fenjie}
\overline{Z}_1=U\left[
                  \begin{array}{cc}
                    \overline{\Lambda} &   \\
                      & 0_{n-l} \\
                  \end{array}
                \right]U^{\top},\ U^{\top}U=I_l
\ee
{be} the spectral decomposition of $\overline{Z}_1$, {where $\overline{\Lambda}=\diag(\overline{\lambda}_1, \overline{\lambda}_2,\cdots,\overline{\lambda}_l)\in\mathcal{S}^{l}$.} Let $\overline{l}:=\{1,2,...,l\}$. It can be described by \cite[eq. (23)]{semi} that the tangent cone of $\mathcal{K}_+^{n+1}$ at $D$ is given as follows
$$
\mathcal{T}_{\mathcal{K}_+^{n+1}}(D):=\left\{Q\left[\begin{array}{cc}
                                                        U\left[
                                                            \begin{array}{cc}
                                                              \Sigma_1 & \Sigma_{12} \\
                                                              \Sigma_{12}^{\top} & \Sigma_2 \\
                                                            \end{array}
                                                          \right]U^{\top}
                                                          & {{\bfa}} \\
                                                         {\bfa}^{\top} & a_0
                                                       \end{array}\right]Q:\ \ \begin{array}{c}
                                                                          \Sigma_1\in\mathcal{S}^{l}, \ \Sigma_2\in\mathcal{S}^{n-l}_+\\
                                                                          \Sigma_{12}\in\mathbb{R}^{l\times(n-l)} \\
                                                                          {\bfa}\in\mathbb{R}^{n},\ a_0\in\mathbb{R}
                                                                        \end{array}\right\}.
$$
Then the largest linear subspace contained in $\mathcal{T}_{\mathcal{K}_+^{n+1}}(D)$ is given by \cite[eq. (24)]{semi}
\be\label{linT}
\mathrm{lin}(\mathcal{T}_{\mathcal{K}_+^{n+1}}(D)):=\left\{Q\left[\begin{array}{cc}
                                                        U\left[
                                                            \begin{array}{cc}
                                                              \Sigma_1 & \Sigma_{12} \\
                                                              \Sigma_{12}^{\top} & 0 \\
                                                            \end{array}
                                                          \right]U^{\top}
                                                          & {\bfa} \\
                                                         {\bfa}^{\top} & a_0
                                                       \end{array}\right]Q:\ \ \begin{array}{c}
                                                                          \Sigma_1\in\mathcal{S}^{l} \ \\
                                                                          \Sigma_{12}\in\mathbb{R}^{l\times(n-l)} \\
                                                                          {\bfa}\in\mathbb{R}^{n},\ a_0\in\mathbb{R}
                                                                        \end{array}\right\}.
\ee

{In step two, we will show (\ref{jianbing}) holds.}
By the characterization of $\mathrm{lin}(\mathcal{T}_{\mathcal{K}_+^{n+1}}(D))$ in (\ref{linT}), we can obtain that
$$A=Q\left[
          \begin{array}{cc}
            \left[
              \begin{array}{cc}
                kI_l & 0 \\
                0 & 0 \\
              \end{array}
            \right]
             & {\bfa} \\
            {\bfa}^{\top} & c \\
          \end{array}
        \right]Q\in \mathrm{lin}(\mathcal{T}_{\mathcal{K}_+^{n+1}}(D)),\ \ \forall \ [{\bfa}^{\top},c,k]^{\top}\in \mathbb{R}^{n+2}.
$$
Given arbitrary ${{\bf b}}\in\mathbb{R}^{n+2}$, to show that (\ref{jianbing}) holds, we want to find $[{\bfa}^{\top},c,k]^{\top}\in \mathbb{R}^{n+2}$ such that
\be\label{Aw=b}
\mtB(A)={\bf b}.
\ee
%$\texttt{\cite{Alfakih_J}}$
{Let $u_{i} $ represent the $i$-th column of $(n+1)\times (n+1)$ identity matrix $I_{n+1}$.} By \cite[Theorem 2.3]{semi}, we have
\begin{equation*}
\begin{split}
A_{ii}&=u_{i}^{\top}Q\left[
          \begin{array}{cc}
            \left[
              \begin{array}{cc}
                kI_l & 0 \\
                0 & 0 \\
              \end{array}
            \right]
             & {\bfa} \\
            {\bfa}^{\top} & c \\
          \end{array}
        \right]Q{u_{i}}\\
&=u_{i}^{\top}Q\left[
              \begin{array}{cc}
                kI_l & 0 \\
                0 & 0 \\
              \end{array}
            \right]
             Q{u_{i}}+({u_{i}})^{\top}Q\left[
          \begin{array}{cc}
           0  & {\bfa} \\
            {\bfa}^{\top} & c \\
          \end{array}
        \right]Q{u_{i}}\\
&=\left\{\begin{array}{cr}
          -\frac{1}{\sqrt{n+1}}u_{i}^{\top}Q\left[
          \begin{array}{c}
            2{\bfa} \\
            c
          \end{array}
        \right]+k,&\mathrm{if}\ i\le l, \\
          -\frac{1}{\sqrt{n+1}}{u_{i}}^{\top}Q\left[
          \begin{array}{c}
            2{\bfa} \\
            c
          \end{array}
        \right],&\mathrm{otherwise}.
         \end{array}\right.\ \ \ \ \qquad \text{(by\ \cite[Theorem \ 2.3]{semi})}
\end{split}
\end{equation*}
Hence,
\begin{equation*}
      \diag(A)=  -\frac{1}{\sqrt{n+1}}e_{n+1}^{\top}Q\left[
          \begin{array}{c}
            2{\bfa} \\
            c
          \end{array}
        \right]+\diag\left(\left[
              \begin{array}{cc}
                kI_l & 0 \\
                0 & 0 \\
              \end{array}
            \right]\right).
\end{equation*}
On the other hand, we have
\begin{equation*}
\begin{split}
\la A,H\ra&=\left\la Q\left[
          \begin{array}{cc}
           0  & {\bfa} \\
            {\bfa}^{\top} & c \\
          \end{array}
        \right]Q,H\right\ra+\left\la Q\left[
              \begin{array}{cc}
                kI_l & 0 \\
                0 & 0 \\
              \end{array}
            \right]
             Q,H\right\ra\\
 &=\mathrm{trace}\left(QHQ\left[
              \begin{array}{cc}
               0  & {\bfa} \\
            {\bfa}^{\top} & c \\
              \end{array}
            \right]\right)+\mathrm{trace}\left(QHQ\left[
              \begin{array}{cc}
                kI_l & 0 \\
                0 & 0 \\
              \end{array}
            \right]\right)\\
 &=u_{n+1}^{\top}(QHQ)\left[
          \begin{array}{c}
            2{\bfa} \\
            c
          \end{array}
        \right]+kl_q\ \qquad\qquad\quad (\mathrm{by} \ \mathrm{Proposition\ \ref{prop2}})\\
 &=-\frac{1}{\sqrt{n+1}}e_{n+1}^{\top}HQ\left[
          \begin{array}{c}
            2{\bfa} \\
            c
          \end{array}
        \right]+kl_q\qquad\quad (\mathrm{by} \ Q{u_{n+1}}=-\frac{1}{\sqrt{n+1}}{e_{n+1}})\\
 &=kl_q , \ \ \ \ \ \qquad\qquad\qquad\qquad\qquad\qquad\quad (\mathrm{by} \ {e_{n+1}^{\top}}H=0)
\end{split}
\end{equation*}
{where $l_q:=\sum_{i=1}^l(QHQ)_{ii}.$} Hence, we have
\begin{equation*}
\begin{split}
\mtB(A)
 &=\left[
           \begin{array}{c}
             \diag(A) \\
             \la A,H\ra\\
           \end{array}
         \right]\\
 &=\left[\begin{array}{c}
      -\frac{1}{\sqrt{n+1}}e_{n+1}^{\top}Q\left[
          \begin{array}{c}
            2{\bfa} \\
            c
          \end{array}
        \right]+kw\\
        kl_q
     \end{array}\right]\\
 &=M\left[
            \begin{array}{c}
              {\bfa} \\
              c \\
              k \\
            \end{array}
          \right],
\end{split}
\end{equation*}
where
$$
M:=\left[
     \begin{array}{cc}
       -\frac{1}{\sqrt{n+1}}Q & w \\
       0 & l_q \\
     \end{array}
   \right]\left[
            \begin{array}{cc}
              2I_n & 0 \\
              0 & I_2  \\
            \end{array}
          \right],\ w=\diag\left(\left[
              \begin{array}{cc}
                I_l & 0 \\
                0 & 0 \\
              \end{array}
            \right]\right)
   .
$$
Since $-H \succeq 0$, then $-QHQ\succeq 0$ with rank $n-r-1$. Then we have $\sum_{i=1}^l(QHQ)_{ii}\neq 0$. Hence, $M $ is invertible due to the fact that $Q$ is invertible and $l_q\neq 0$.

Hence, for any ${\mathbf{b}}\in\mathbb{R}^{n+2}$, we can find $[{\bfa}^{\top},c,k]^{\top}\in\mathbb{R}^{n+2}$ such that $[{\bfa}^{\top},c,k]^{\top}=M^{-1}{\mathbf{b}}$. For such ${\bfa}$, $c$ and $k$, we have ${\mathbf{b}}=\mtB(A)\in \mathcal{B}\left(\mathrm{lin}(\mathcal{T}_{\mathcal{K}_+^{n+1}}(D))\right)$. Thus (\ref{jianbing}) holds and hence constraint nondegeneracy holds at $D$. This completes the proof.
\end{proof}

{
Constraint nondegeneracy guarantees the quadratic convergence of semismooth Newton's method. We can use the globalized version of semismooth Newton’s method proposed in \cite{semi} to solve (\ref{convex}). We will take this method into account in Section \ref{sec4} to conduct our numerical comparison.
}
%For such $a$, $c$ and $k$, we have $b=\mtB(A)\in \mathcal{B}\left(\mathrm{lin}(\mathcal{T}_{\mathcal{K}_+^m}(X))\right)$. Thus (\ref{jianbing}) holds and hence the constraint nondegeneracy holds at $D$.

\section{Majorized Penalty Method for EDMFR}\label{sec3}
In this section, we will present the numerical algorithm for solving (\ref{EDMFR_former}), which is the majorized penalty method discussed in \cite{qi2018}.

Firstly, we can rewrite (\ref{EDMFR_former}) as the following compact form
\begin{equation}\label{zuizhong}
  \begin{split}
  \min_{D\in \mathcal{S}^{n+1}} &\ f(D):=\frac{1}{2}\|D -\Delta\|_F^2 \\
  \text{s.t.} &\ {-D\in\mathcal{K}^{n+1}_+(r)} \\
   % &\ {\rank(JXJ)\le r}\\
    &\ \mtB(D)=0,
\end{split}
\end{equation}
where $\mathcal{K}_{+}^{n+1}(r)$ is the conditional positive semidefinite cone with rank-$r$ cut defined by
\begin{equation}\label{Kr}
\mK_+^{n+1} (r):=\{D\in\mK_+^{n+1}: \ \rank({J_{n+1} DJ_{n+1}})\le r\}.
\end{equation}
%In addition, using the similar EDM expression as (\ref{nedm}), Qi \cite{EMBED} proposes a majorized penalty method for the case of rank constraint, which is also the method that we will use to solve our model.
Inspired by the majorization technique proposed by \cite{qi2018}, we could penalize $\mK_+^{n+1} (r)$, and then solve the majorized problem of the resulting problem. Such idea is also used to solve other types of nonconvex models, for example, EDM model with ordinal constraints \cite{edmoc}.% and support vector classification with sparsity constraints \cite{svm}.
%which is defined by (\ref{Kr}). {The approach made} use of the idea of majorization-minimization. We will explain in {details} in the next {subsection} how to use the {algorithm} to solve problem (\ref{zuizhong}).
\subsection{{Penalizing} $\mathbf{\mathcal{K}^{n+1}_+(r)}$}\label{sec3.1}
%Zhou et al. \cite{qi2018} propose a majorized penalty approach for the constraint $-X\in\mK_+^{n+1} (r)$, where the objective function is Kruskal stress minimization \cite{Kruskal}. Accordingly, this approach can also be used in our model (\ref{EDMFR}).
{In order to introduce the majorized penalty method in \cite{qi2018}, we first give the following lemma which leads to the equivalent reformulation of the rank constraint $D\in\mathcal{K}^{n+1}_+(r)$.
\begin{lemma}{\rm\cite[{\it Lemma} 2.1]{qi2018}}\label{lemma1}
Given $D\in\mathcal{S}^{n+1}$ and integer $r\le n+1$. Let $\Pi^B_{\mK_+^{n+1}(r)}(D)$ denote the projection of $D$ onto nonconvex set $\mK_+^{n+1}(r)$. For any $\Pi_{\mK_+^{n+1}(r)}(D)\in\Pi^B_{\mK_+^{n+1}(r)}(D)$, we have the following results.
\begin{description}
  \item[{\rm(i)}]$\la \Pi_{\mK_+^{n+1}(r)}(D),D-\Pi_{\mK_+^{n+1}(r)}(D)\ra=0$.
  \item[{\rm(ii)}]The function
$$
h(D):=\frac12\|\Pi_{\mK_+^{n+1}(r)}(D)\|_F^2
$$
is convex and $\Pi_{\mK_+^{n+1}(r)}(D)\in\partial h(D)$, where $\partial h(D)$ is the subdifferential of $h$ at $D$.
  \item[{\rm(iii)}]One particular element of $\Pi^B_{\mK_+^{n+1}(r)}(D)$ {\rm(}denoted by $\Pi_{\mK_+^{n+1}(r)}(D)${\rm)} is given by
  $$
\Pi_{\mK_+^{n+1}(r)}(D)=\Pi_{\mathcal{S}_+^{n+1}(r)}({J_{n+1} DJ_{n+1}})+(D-{J_{n+1} DJ_{n+1}}).
$$
  For any $A\in\mathcal{S}^{n+1}_+$, $\Pi_{\mathcal{S}_+^{n+1}(r)}(A)$ can be calculated by
      $$
      \Pi_{\mathcal{S}_+^{n+1}(r)}(A)=\sum_{i=1}^r \max(0,\lambda_i)p_i p_i^{\top},
      $$
      with the spectral decomposition of $A$ given by
      $$
      A=\sum_{i=1}^n \lambda_i p_i p_i^{\top}.
      $$

\end{description}
\end{lemma}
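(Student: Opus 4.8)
The plan is to exploit the structure of the centering matrix $J_{n+1}$ to reduce the projection onto the nonconvex cone $\mathcal{K}_+^{n+1}(r)$ to a projection onto the rank-$r$ cut of the PSD cone, $\mathcal{S}_+^{n+1}(r)$. The first fact I would record is that $D\in\mathcal{K}_+^{n+1}$ if and only if $J_{n+1}DJ_{n+1}\succeq 0$: for any $v$ with $e_{n+1}^{\top}v=0$ one has $J_{n+1}v=v$, so $v^{\top}Dv=v^{\top}(J_{n+1}DJ_{n+1})v$, and as $v$ ranges over $e_{n+1}^{\perp}$ the vector $J_{n+1}v$ ranges over all of $e_{n+1}^{\perp}$, while $J_{n+1}DJ_{n+1}$ annihilates $e_{n+1}$. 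Hence $\mathcal{K}_+^{n+1}(r)=\{D:\ J_{n+1}DJ_{n+1}\in\mathcal{S}_+^{n+1}(r)\}$, which is a closed cone. The second fact is that the linear map $\mathcal{J}(D):=J_{n+1}DJ_{n+1}$ is a self-adjoint idempotent on $\mathcal{S}^{n+1}$ (since $J_{n+1}^2=J_{n+1}$ and the trace is cyclic), hence an orthogonal projection, inducing the Frobenius-orthogonal splitting $D=J_{n+1}DJ_{n+1}+(D-J_{n+1}DJ_{n+1})$.

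For part (iii) I would use this splitting to decompose the squared objective,
$$\tfrac12\|X-D\|_F^2=\tfrac12\|J_{n+1}XJ_{n+1}-J_{n+1}DJ_{n+1}\|_F^2+\tfrac12\|(X-J_{n+1}XJ_{n+1})-(D-J_{n+1}DJ_{n+1})\|_F^2,$$
and note that the constraint $J_{n+1}XJ_{n+1}\in\mathcal{S}_+^{n+1}(r)$ touches only the first summand while leaving the complementary component $X-J_{n+1}XJ_{n+1}$ entirely free. Minimizing the two summands independently sets the free component equal to $D-J_{n+1}DJ_{n+1}$ and the centered component to $\Pi_{\mathcal{S}_+^{n+1}(r)}(J_{n+1}DJ_{n+1})$; one then checks that the latter still lies in $\mathrm{range}(\mathcal{J})$, because $J_{n+1}DJ_{n+1}$ has $e_{n+1}$ as a zero eigenvector, so the retained eigenvectors stay in $e_{n+1}^{\perp}$. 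Adding the two pieces yields the stated formula, the inner $\Pi_{\mathcal{S}_+^{n+1}(r)}$ being the standard truncated-eigenvalue projection.

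For part (i) I would avoid any eigenvalue computation and use only that $\mathcal{K}_+^{n+1}(r)$ is a cone. Let $X^{\star}\in\Pi^B_{\mathcal{K}_+^{n+1}(r)}(D)$. Since $tX^{\star}\in\mathcal{K}_+^{n+1}(r)$ for all $t\ge0$, the convex quadratic $\phi(t):=\tfrac12\|tX^{\star}-D\|_F^2$ attains its minimum over $t\ge0$ at $t=1$; differentiating and using optimality at the interior point $t=1$ (the case $X^{\star}=0$ being trivial) gives $\phi'(1)=\|X^{\star}\|_F^2-\la D,X^{\star}\ra=0$, which is exactly $\la X^{\star},D-X^{\star}\ra=0$. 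Part (ii) I would obtain from a support-function representation: combining part (i) with the Pythagorean identity gives $h(D)=\tfrac12\|D\|_F^2-\tfrac12\,\mathrm{dist}(D,\mathcal{K}_+^{n+1}(r))^2$, and a direct expansion of the squared distance shows $h(D)=\max_{X\in\mathcal{K}_+^{n+1}(r)}\big(\la D,X\ra-\tfrac12\|X\|_F^2\big)$. As a pointwise maximum of affine functions of $D$, $h$ is convex irrespective of the nonconvexity of $\mathcal{K}_+^{n+1}(r)$; and since the maximizing $X$ is precisely $\Pi_{\mathcal{K}_+^{n+1}(r)}(D)$, the corresponding supporting affine minorant witnesses $\Pi_{\mathcal{K}_+^{n+1}(r)}(D)\in\partial h(D)$.

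The step I expect to be most delicate is part (iii): the separation of the objective into independent pieces is legitimate only after verifying that the complementary component $X-J_{n+1}XJ_{n+1}$ may be chosen freely and that the rank-$r$ PSD projection of the centered block does not leak out of $\mathrm{range}(\mathcal{J})$. Both points rest on $J_{n+1}e_{n+1}=0$ and on $\mathcal{J}$ being an orthogonal projection, so I would isolate these as a preliminary lemma before assembling the decomposition formula.
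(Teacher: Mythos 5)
Your proof is correct, but note that the paper itself offers no proof of this statement: it is imported verbatim as \cite[Lemma 2.1]{qi2018}, so there is no in-paper argument to compare against. Judged on its own, your argument is sound and is essentially the standard one from the cited source and its antecedents. The two structural observations you isolate are exactly the right ones: $X\in\mathcal{K}_+^{n+1}(r)$ iff $J_{n+1}XJ_{n+1}\in\mathcal{S}_+^{n+1}(r)$, and $X\mapsto J_{n+1}XJ_{n+1}$ is a self-adjoint idempotent, hence an orthogonal projection giving a Pythagorean split of $\tfrac12\|X-D\|_F^2$. Your handling of the one delicate point in (iii) is also right: the minimizer of the centered block over all of $\mathcal{S}_+^{n+1}(r)$ happens to lie in $\mathrm{range}(\mathcal{J})$ because the truncated spectral projection retains only eigenvectors with positive eigenvalues, all orthogonal to $e_{n+1}$, so it is a fortiori the minimizer over the realizable set $\mathrm{range}(\mathcal{J})\cap\mathcal{S}_+^{n+1}(r)$. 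Parts (i) (one-dimensional optimality along the ray $tX^\star$, with the $X^\star=0$ case handled separately) and (ii) (the representation $h(D)=\max_{X\in\mathcal{K}_+^{n+1}(r)}\bigl(\la D,X\ra-\tfrac12\|X\|_F^2\bigr)$ as a supremum of affine functions, with the maximizer supplying the subgradient) are both valid and require only that the set is a nonempty closed cone, not convexity. The only minor addition worth making explicit is that the identity in (i) holds for every element of the possibly set-valued $\Pi^B_{\mathcal{K}_+^{n+1}(r)}(D)$, which is what makes $h$ well defined before you invoke it in (ii); your argument already delivers this since it never uses uniqueness of the projection.
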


}

The main idea of majorized penalty method \cite{qi2018} is to reformulate constraint $-D\in \mK_+^{n+1} (r)$ as the following equivalent form
\begin{equation}\label{g(A)=0}
-D\in \mK_+^{n+1} (r)\ \Longleftrightarrow\ g(D):=\frac{1}{2}\mathrm{dist}^2 (-D, \mK_+^{n+1} (r))=0.
\end{equation}
Here, $\mathrm{dist}(-D, \mK_+^{n+1} (r))$ denotes the distance between $-D$ and set $\mathcal{K}^{n+1}_+(r)$. By (\ref{g(A)=0}), problem (\ref{zuizhong}) is equivalent to
\begin{equation}\label{final-model}
  \begin{split}
  \min_{D\in \mS^{n+1}} &\ f(D) \\
  \text{s.t.} &\ g(D)=0 \\
  & \mtB(D)=0.
\end{split}
\end{equation}
Then we penalize $g(D)$ to the objective function and a majorization method is designed to solve the resulting penalty problem %Denote $\Xi:=\{X\in \mS^m:\ \mtB(X)=0\}$. %Next, we penalize the constraints in the model and get the penalized version {of (\ref{final-model}),}
\begin{equation}
  \begin{split}
  \min_{D\in \mS^{n+1}} & \ {f_{\rho}(D):=f(D)+\rho g(D)}\\
  \text{s.t.} & \ D\in \Xi ,\\
\end{split}
\end{equation}
where $\rho>0$ is the penalty factor and $\Xi:=\{D\in \mS^{n+1}\mid\mtB(D)=0\}$.

%is convex and one of its subgradients is $\Pi_{\mK_+^{n+1}(r)}(D)$. Here, $\Pi_{\mK_+^{n+1}(r)}(D)\in\Pi^B_{\mK_+^{n+1}(r)}(D)$, and $\Pi^B_{\mK_+^{n+1}(r)}(D)$ denotes the projection of $D$ onto nonconvex set $\mK_+^{n+1}(r)$.
{In order to use the majorized penalty method, we have the following result which gives the relationship between $g(\cdot)$ and $h(\cdot)$.
\begin{lemma}{\rm\cite[{\it Lemma} 2.2]{qi2018}}\label{lemma2.2}
The function $h(\cdot)$ in Lemma {\rm\ref{lemma1}} can be reformulated by
$$
h(D)=\frac12\|D\|_F^2-g(-D),
$$
where $g(D)$ is defined in {\rm(\ref{g(A)=0})}.
\end{lemma}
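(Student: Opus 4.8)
The plan is to reduce the identity to the orthogonality relation already established in Lemma~\ref{lemma1}(i). First I would unwind the definitions. Writing $P:=\Pi_{\mK_+^{n+1}(r)}(D)$ for a fixed element of the projection set, the distance appearing in the definition of $g$ is attained at $P$, so that
\be
g(-D)=\frac12\,\mathrm{dist}^2(D,\mK_+^{n+1}(r))=\frac12\|D-P\|_F^2 .
\ee
Hence the claimed identity $h(D)=\frac12\|D\|_F^2-g(-D)$ is equivalent to showing
\be
\frac12\|P\|_F^2=\frac12\|D\|_F^2-\frac12\|D-P\|_F^2 .
\ee

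Next I would expand the right-hand side by completing the square. Using $\|D-P\|_F^2=\|D\|_F^2-2\la D,P\ra+\|P\|_F^2$, the right-hand side collapses to
\be
\frac12\|D\|_F^2-\frac12\|D-P\|_F^2=\la D,P\ra-\frac12\|P\|_F^2 .
\ee
Therefore the identity to be proved is exactly $\la D,P\ra=\|P\|_F^2$, i.e. $\la P,\,D-P\ra=0$. But this is precisely Lemma~\ref{lemma1}(i), applied to the selected projection element $P=\Pi_{\mK_+^{n+1}(r)}(D)$. Substituting $\la P,D-P\ra=0$ back then yields $\la D,P\ra=\|P\|_F^2$ and closes the chain of equalities.

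There is no genuine obstacle here: the whole argument is a one-line completion-of-squares computation once the distance is identified with $\|D-P\|_F$ and the orthogonality from Lemma~\ref{lemma1}(i) is invoked. The only point requiring care is that $\mK_+^{n+1}(r)$ is nonconvex, so the projection is set-valued; I would emphasize that the identity holds for \emph{any} selected $P\in\Pi^B_{\mK_+^{n+1}(r)}(D)$, since every such element simultaneously attains the minimal distance used in $g$ and satisfies the orthogonality relation in Lemma~\ref{lemma1}(i). This mutual compatibility between the chosen projection, $h$, and $g$ is what makes the displayed formula well defined despite the nonconvexity.
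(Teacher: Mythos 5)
Your argument is correct: identifying $g(-D)=\tfrac12\|D-P\|_F^2$ for any selected $P\in\Pi^B_{\mK_+^{n+1}(r)}(D)$ and completing the square reduces the identity exactly to the orthogonality $\la P, D-P\ra=0$ of Lemma \ref{lemma1}(i), and your remark that this also shows $\|P\|_F$ (hence $h$) is independent of the selection is the right point to flag given the nonconvexity of $\mK_+^{n+1}(r)$. The paper itself states this lemma without proof, citing \cite[Lemma 2.2]{qi2018}; your derivation is the standard one underlying that reference, so there is no substantive divergence to report.
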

%For this purpose,
Based on the properties above, we need to obtain a majorization function of $g(D)$. Recall the definition of majorization function. A majorization function $g_m(D,A)$ of $g(D)$ at a given point $A\in\mathcal{S}^{n+1}$ satisfies the following conditions
$$
g_m (A,A)=g(A),\ g_m (D,A)\ge g(D),\ \forall D\in\mathcal{S}^{n+1}.
$$
By the convexity of $h(A)$ and $\Pi_{\mK_+^{n+1}(r)}(A)\in\partial h(A)$, we have
\be\label{partialh(D)}
h(D)-h(A)\ge \la \Pi_{\mK_+^{n+1}(r)}(A),D-A\ra, \ \forall D\in \mathcal{S}^{n+1}.
\ee
By (\ref{partialh(D)}), }the majorization function of $g(D)$ at a given point $A\in\mathcal{S}^{n+1}$ can be obtained by
\begin{equation}\label{fun-gm}
g_m (D,A):=\frac{1}{2}\|D\|_F^2 -h(-A)+\langle\Pi_{\mK^{n+1}_+ (r)}(-A), D-A\rangle.
\end{equation}
{It can be easily verified that $g_m (D,A)$ is a majorization function of $g(D)$.} Then at each iteration $D^k$, we solve the following subproblem
%Next, we use the idea of majorization and majorization function $g_m (X,A)$ of $g(X)$ to get the following subproblem:
\begin{equation}\label{xk+1}
D^{k+1}=\arg\min_{D\in \Xi }f_{\rho}^m(D){:=f(D)+\rho g_m (D,D^k)}.
\end{equation}
{Majorization} function $f^m_{\rho}(D)$ in (\ref{xk+1}) can be simplified as
\begin{eqnarray}\label{x+}
  %& & \nonumber\\
  f_{\rho}^{m}(D,D^k)  & =&\frac{1}{2}\|D\|_F^2-\la D,\Delta\ra+\rho\left(\frac12 \|D\|_F^2+\la \Pi_{\mK_+^{n+1}(r)}(-D^k),D\ra\right) \nonumber\\
    & =&\frac12 \|D\|_F^2-\la D,\Delta^k\ra+\mathrm{const}\nonumber\\
    & =&\frac12 \|D-\Delta^k\|_F^2+\mathrm{const},
\end{eqnarray}
%\begin{align}\label{x+}
%    & f_{\rho}^{m}(X,A) \\
%  = & \frac{1}{2}\|X\|^2-\la X,\Delta\ra+\rho\left(\frac12 \|X\|^2+\la \Pi_{\mK_+^m(r)}(-X^k),X\ra\right) \nonumber
%\end{align}
where $\Delta^k:=(\Delta-\rho\Pi_{\mK_+^{n+1}(r)}(-D^k))/(1+\rho)$ and 'const' represents the constant part with respect to $D$.
%For $D\in\mathcal{S}^{n+1}$, solutions $\Pi^B_{\mK_+^{n+1}(r)}(D)$ can be calculated by \cite[eq (22)]{EMBED}
%$$
%\Pi^B_{\mK_+^{n+1}(r)}(D)=\Pi^B_{\mathcal{S}_+^{n+1}(r)}({J_{n+1} DJ_{n+1}})+(D-{J_{n+1} DJ_{n+1}}).
%$$
Therefore, the subproblem reduces to the following form
\begin{equation}\label{reduce-form}
\min_{D\in\Xi}\ \frac12 \|D-\Delta^k\|_F^2.
\end{equation}
\subsection{Solving Subproblem (\ref{reduce-form}) by Explicit Formula.}
{Subproblem} (\ref{reduce-form}) is the projection onto convex set $\Xi$ which admits explicit formula. We derive it below.

The Lagrangian function of problem (\ref{reduce-form}) is given by
$$
L(D,y)=\frac12\|D-\Delta^k\|_F^2-{\la \mathcal{B}(D),y\ra,}
$$
where $y\in\mathbb{R}^{n+2}$ is the Lagrange multiplier. We can obtain the KKT conditions as follows:
$$
\left\{\begin{array}{l}
         \nabla_D L(D,y)=D-\Delta^k-\mathcal{B}^*(y) =0\\
         \mathcal{B}(D)=0. \\
       \end{array}
\right.
$$
Due to convexity of subproblem (\ref{x+}), we can get the optimal solution by {solving the} KKT conditions:
\be\label{kkt}
D^{k+1}:=\Delta^k+\mathcal{B}^*(y),
\ee
where
\be\label{yk}
\left\{\begin{array}{l}
  y_{n+2}=-\frac{\la \Delta^k-\Diag(\diag(\Delta^k)),H\ra}{\la H-\Diag(\diag(H)),H\ra}, \\
   y_{1:n+1}=-\diag(\Delta^k + y_{n+2}^k H).
\end{array}\right.
\ee
In other words, subproblem (\ref{reduce-form}) admits explicit solution given by (\ref{kkt}) and (\ref{yk}). Now we are ready to give the framework of the majorization penalty method for (\ref{EDMFR_former}).
\begin{algorithm}
\caption{FRMPA: Majorized Penalty Approach for EDMFR}\label{alg1}
\begin{algorithmic}[1]
\REQUIRE Dissimilarity matrix $\Delta$, matrix $H$, penalty parameter $\rho>0$, dimension $r$.
\STATE \textbf{Initialize} $D^0 \in\mathcal{E}^{n+1}$ and $k:=0$.
\STATE \textbf{Calculate} $D^{k+1}$ by {\rm(\ref{kkt}) and (\ref{yk})}.
\STATE \textbf{Update} $k \leftarrow k + 1$ and go to {\rm Step 2} until convergence.
\end{algorithmic}
\end{algorithm}

Note that Algorithm \ref{alg1} mentioned above is specifically designed for SSLP, which solves EDMFR model (\ref{EDMFR_former}).

\section{Numerical Results}\label{sec4}
In this section, we will test {the semismooth Newton's method \cite{semi} for the convex relaxation model (\ref{convex}) (denoted as convex model with facial reduction technique, \verb"FRC") and} the proposed Algorithm \ref{alg1} (denoted by \verb"FRMPA") to {see the} performance. %We first referred to the examples in the literature.
All the tests are {conducted} on a laptop in MATLAB R2016a with Intel(R) Core(TM) i5-6200 CPU @ 2.30GHz 2.40GHz, 4GB RAM.

%\subsection{Parameters setting}
For \verb"FRMPA", we set the termination condition:
$$
f_{prog}:=\frac{\mid f_{\rho} (D^k)-f_{\rho}(D^{k-1}) \mid }{1+f_{\rho}(D^{k-1})}<10^{-4}.
$$
In other words, when the objective function progresses relatively slowly, we believe that current iteration $D^k$ is a good iteration. In \verb"FRMPA", we set $\rho=0.1$.
{For \verb"FRC", since the convex problem (\ref{convex}) removes the constraint  $\rank(J_{n+1}DJ_{n+1})\le r$, the proportion of eigenvalues will be taken into account in the following comparative experiments. We define the proportion as
$$
\mathrm{Eigenratio}:=\sum_{i=1}^r \lambda_i(-J_{n+1}DJ_{n+1})/\sum_{i=1}^n \lambda_i(-J_{n+1}DJ_{n+1}),
$$
where $D$ is the final computed EDM. The resulting EDM is regarded good if $\mathrm{Eigenratio}\ge 90\%$.
}

After obtaining an EDM $D$ {by \verb"FRC" and \verb"FRMPA"}, we apply multidimensional scaling (\verb"cMDS") to obtain a set of data $\hat{\mathbf{x}}_1,...,\hat{\mathbf{x}}_n,\hat{\mathbf{x}}_{n+1}$. Together with the coordinates of sensors $\bfx_1,...,\bfx_n$, we conduct Procrustes process \cite{Procrustes} to rotate the sensors back to there original positions, meanwhile, we also obtain the estimated position of the source, denoted by {$\bar\bfx_{M}$}.

\subsection{Application in Single Source Localization Problem}
We compare \verb"FRC" and \verb"FRMPA" with the following methods, the Lagrangian dual method (\verb"LagD") proposed by \cite{qiLAG} for solving problem (\ref{nedm}), the facial reduction technique of SDP (\verb"FNEDM") for solving (\ref{sdpsslp}) \cite{psd}, the squared-range-based least squares (\verb"SR-LS") \cite{ex1}, {the standard} fixed point scheme (\verb"SFP"){, and the constrained }weighted least squares method (\verb"CWLS") \cite{Cheung}. %\verb"SR-LS", \verb"SFP" and \verb"CWLS" are the classical approaches to solve model (\ref{}).
We report cputime as well as the following measures to evaluate the {quality} of solutions: squared position error of method $M$ defined by
\be\label{err}
\mathrm{err}_M:=\|\mathbf{x}_{n+1} - {\bar\bfx_{M}}\|_2^2,
\ee
where ${\bar\bfx_{M}}$ is the estimated location provided by method $M$.

The following examples {are tested} for $r=2$ (E1$\sim$E4) and $r=3$ (E5$\sim$E6). Note that \verb"CWLS" is only for $r=2$.

%$ $

\bit
%\begin{example}
\item[E1.] {\rm\cite[Example 1]{ex1}} %\label{example1}
In this example, we extend Example \ref{XH=0} {in Section \ref{sec2}} numerically. There is a Gaussian noise $\epsilon_i$ with mean $0$ and variance of $0.1$ between the target node and anchors $\mathbf{x}_i$, i.e., the noisy distance are $\|\mathbf{x}_{n+1}-\mathbf{x}_i\|_2+\epsilon_i$. The real and observed distances of target node $\mathbf{x}_{n+1}$ and anchor nodes $\mathbf{x}_i$ are
$$
\begin{array}{cccccc}
  \mathrm{exact} & \sqrt{65} & \sqrt{173} & \sqrt{85} & \sqrt{58} & \sqrt{61} \\
  \mathrm{noisy} & 8.0051 & 13.0112 & 9.1138 & 7.7924 & 8.0210
\end{array}
$$
respectively. The coordinate of the solution by \verb"FRMPA" {and \verb"FRC" are both} $(-1.9907, 3.0474)$, which is approximately to that of  \verb"FNEDM" and \verb"LagD" $(-1.9907, 3.0474)$, while the optimal solution obtained by \verb"SR-LS", \verb"SFP" and \verb"CWLS" are $(-2.0189, 2.9585)$, (-1.9916,3.0467) and (-1.9895,3.0431), respectively. We demonstrate them in {Fig.} \ref{fig:Ex1}.
\begin{figure}[H]
  \centering
  % Requires \usepackage{graphicx}
  \includegraphics[width=0.72\textwidth]{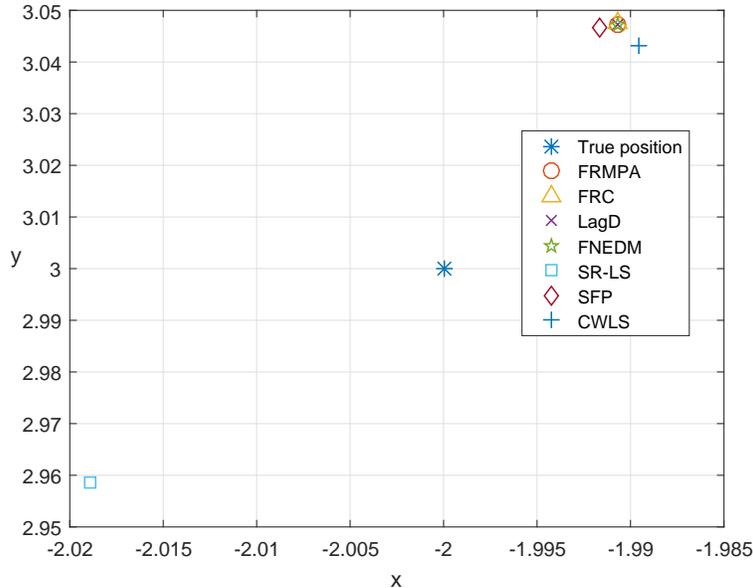}\\
  \caption{The positions solved by the seven methods in E1}\label{fig:Ex1}
\end{figure}
It can be seen from Table \ref{tab:Ex1} that \verb"CWLS" provides a solution with smallest squared error, while consuming the least cputime.
Comparing {four} solvers \verb"FRMPA", \verb"FRC", \verb"LagD" and \verb"FNEDM" for matrix models of SSLP, they return competitive solutions with almost the same accuracy, whereas our approach \verb"FRMPA" is the fastest among these {four} solvers.
%We can see that each of the solutions are very close to the real source. It should be further noted that {\rm SDP} and {\rm EDM} facial reduction techniques are not only similar in theory, but also have almost identical results.
%As we can see from the results in {\rm Table \ref{tab:Ex1}}, {\verb"FRMPA" has} consume less time than \verb"LagD".
%And the squared error are $0.0033,\ 0.0023,\ 0.0027$ respectively. Therefore, we can see that Approach $2$ performs best in this example.
\begin{table}[htbp]
  \centering
  \caption{Numerical results for E1}
     \begin{tabular}{ccc}
    \toprule
    Method & \multicolumn{1}{c}{err} & \multicolumn{1}{c}{time(s)} \\
    \midrule
    FRMPA & 2.33E-03 & 3.32E-03 \\
    FRC   & 2.34E-03 & 4.05E-03 \\
    LagD  & 2.33E-03 & 5.56E-02 \\
    FNEDM & 2.33E-03 & 4.32E-01 \\
    SR-LS & 2.08E-03 & 2.04E-03 \\
    SFP   & 2.32E-03 & 4.59E-04 \\
    CWLS  & 1.97E-03 & 4.89E-04 \\
    \bottomrule
    \end{tabular}%
  \label{tab:Ex1}%
\end{table}%
%\end{example}

\item[E2.] \cite{Cheung} %\label{example6}
We follow Cheung et al. \cite{Cheung} {and} consider the sensors as the base stations and the source as the cellular phone. The five base stations are at coordinates $(0,0) m$, $(3000\sqrt{3}, 3000)m$, $(0, 6000)m$, $(-3000\sqrt{3}, 3000)m$, and $(-3000\sqrt{3}, -3000)m$. In \cite{Cheung}, the phone position was fixed at {$(1000, 2000)m$.} %Instead, We consider that user locations are randomly placed throughout the {\rm 6 km} by {\rm 6 km} base station coverage area.
%but instead of that we consider position randomly placed in the whole 12 km by 12 km region the base stations cover.

Consider the distance measurement model in (\ref{noisy}). The noises are normally distributed with mean zero and variance $\sigma$ between 90 and 180. All results are based on an average of 100 instances. As we can see in {Fig.} \ref{fig:Ex6}, the squared position error are very close among the {seven} methods. In terms of cputime, \verb"FRMPA", \verb"FRC", \verb"SR-LS", \verb"SFP" and \verb"CWLS" are all very fast, whereas \verb"LagD" and \verb"FNEDM" are not as fast as others.
\begin{figure}[H]
  \vspace{15pt}
  \centering
  %\begin{minipage}{.48\linewidth}
%\includegraphics[width=1\textwidth]{ex7ploterr.eps}%\vspace{-10pt}
%  \caption*{(a)\ Squared position error}
%  \end{minipage}
  \begin{minipage}{.48\linewidth}
\includegraphics[width=1\textwidth]{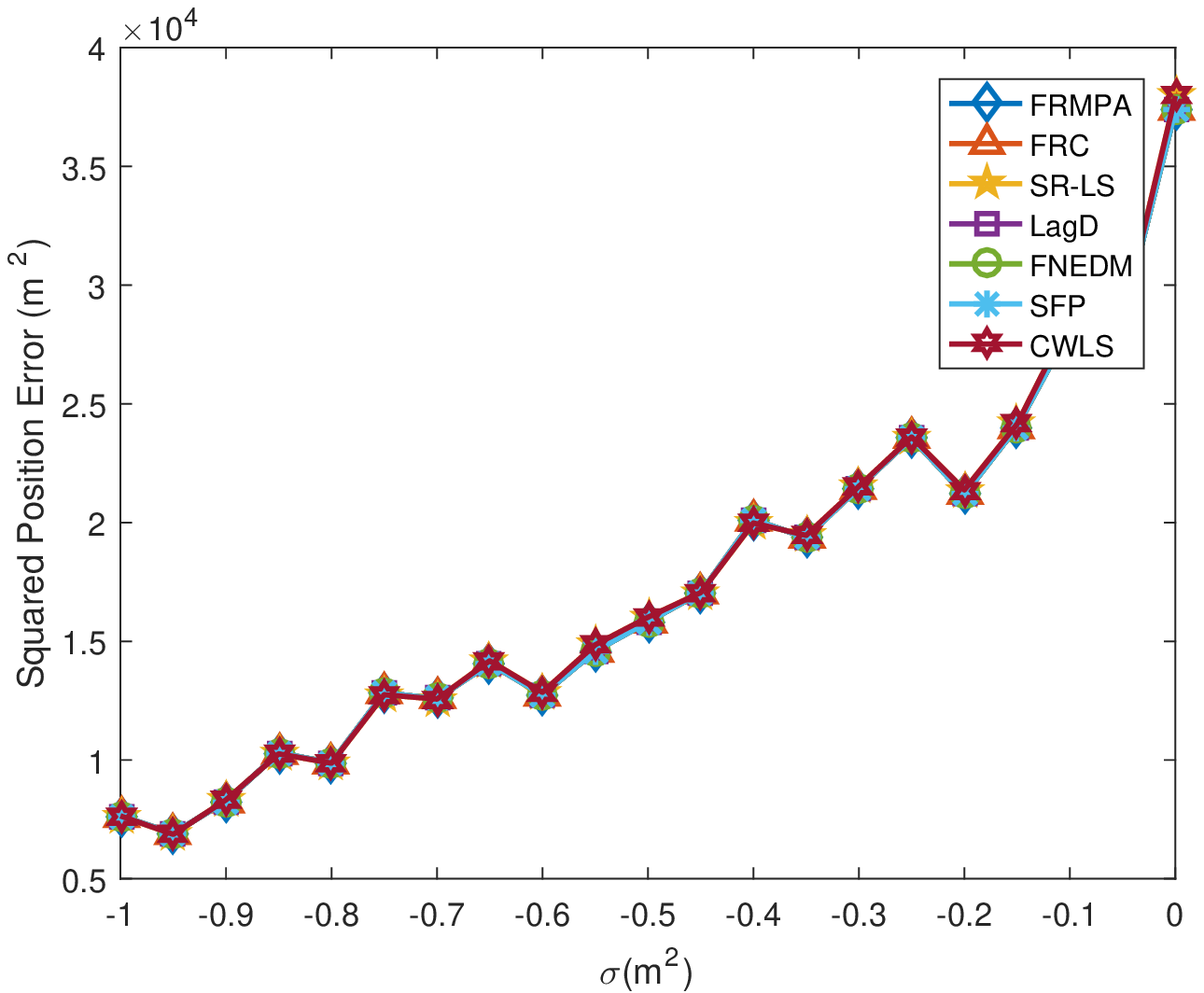}%\vspace{-10pt}
  \caption*{(a)\ Squared position error}
  \end{minipage}
 \begin{minipage}{.48\linewidth}
\includegraphics[width=1\textwidth]{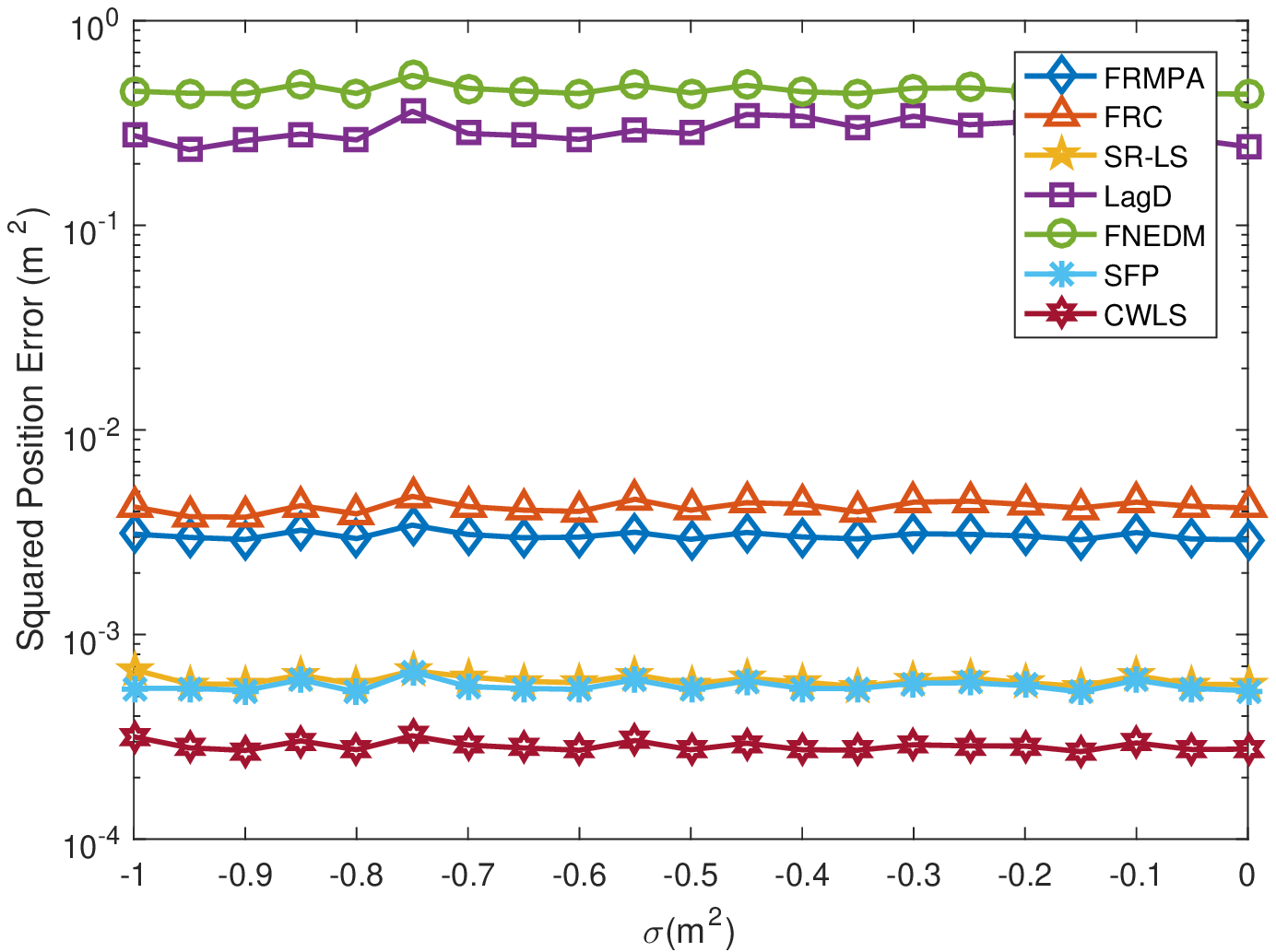}%\vspace{-10pt}
  \caption*{(b)\ Computational time}
  \end{minipage}
 \caption{The 2-dimensional results in E2}\label{fig:Ex6}
 \vspace{0pt}
\end{figure}
%\end{example}

%\begin{example}
\item[E3. ]{\rm\cite[Example 4.3]{ex2}}\ %\label{example2}
In this example, we randomly generate 100 instances. In each instance, there are five sensors whose positions are $\mathbf{x}_i$ and source position $\mathbf{x}_{n+1}$ are randomly generated from a uniform distribution on $[-10,10]\times [-10,10]$. In each instance, there is a normally distributed noise $\epsilon_i$ with mean zero and variance $\sigma ^2$. In other words, the observed distances between sensors and the source are
$$
\Delta_{n+1,i}=\Delta_{i,n+1}=\|\mathbf{x}_{n+1}-\mathbf{x}_i\|_2+\epsilon_i,\ i=1,...,n.
$$
%We test the mean of squared position error of the location of the $100$ instances, as shown in {\rm Table \ref{tab:Ex2}}.
The average results over 100 random instances are reported in Table \ref{tab:Ex2}.
It can be seen from {\rm Table \ref{tab:Ex2}} that for most cases the optimal solutions obtained by  {\verb"FRC",} \verb"FRMPA" and \verb"LagD" are very close and {they are} better than the other four methods. The method \verb"FNEDM" also performs well {in terms of the quality of solutions,} however, the cputime is not as fast as \verb"FRMPA". For the other three solvers \verb"SR-LS", \verb"SFP" and \verb"CWLS", they are very fast yet the quality of the solutions are not as good as \verb"FRMPA" and \verb"LagD".

%When the standard deviation is larger ($\sigma=1,\ 0.1$), \verb"FRMPA" performs much better. In {\rm Table \ref{tab:Ex2}}, it suggests that \verb"FRMPA" has less iterations compared with \verb"LagD". Even more to the point, as we can see in {\rm Table \ref{tab:Ex2}}, \verb"FRMPA" is much faster than \verb"LagD" and \verb"FNEDM".
%and \verb"FR-MPA" performs best in many cases.
%When $\sigma=0.01$, our methods slightly inferior, but when the standard deviation is larger ($\sigma=1,\ 0.1$), our two methods perform much better.
%\end{example}
\begin{table}[htbp]
  \centering
  \caption{Numerical results for E3}
    \begin{tabular}{cccccc}
    \toprule
    Method & $\sigma$ & 1.00E-03 & 1.00E-02 & 1.00E-01 & 1.00E+00 \\
    \midrule
    \multirow{2}[2]{*}{FRMPA} & err   & \textbf{1.04E-06} & \textbf{1.05E-04} & \textbf{1.05E-02} & \textbf{1.84E+00} \\
          & time(s) & 4.30E-03 & 4.28E-03 & 5.37E-03 & 5.19E-03 \\
    \midrule
    \multirow{2}[2]{*}{FRC} & err   & 1.06E-06 & \textbf{1.05E-04} & \textbf{1.05E-02} & \textbf{1.84E+00} \\
          & time(s) & 4.66E-03 & 4.57E-03 & 5.97E-03 & 7.83E-03 \\
    \midrule
    \multirow{2}[2]{*}{LagD} & err   & 1.07E-06 & \textbf{1.06E-04} & \textbf{1.05E-02} & 2.88E+00 \\
          & time(s) & 5.53E-02 & 5.38E-02 & 6.60E-02 & 1.36E-01 \\
    \midrule
    \multirow{2}[2]{*}{FNEDM} & err   & 1.05E-06 & \textbf{1.05E-04} & \textbf{1.05E-02} & 1.86E+00 \\
          & time(s) & 7.03E-03 & 8.33E-03 & 8.18E-03 & 8.73E-03 \\
    \midrule
    \multirow{2}[1]{*}{SR-LS} & err   & 1.71E-06 & 1.70E-04 & 1.69E-02 & 2.17E+00 \\
          & time(s) & 2.13E-03 & 1.90E-03 & 2.22E-03 & 2.20E-03 \\
    \midrule
    \multirow{2}[1]{*}{SFP} & err   & 2.38E+00 & 2.38E+00 & 2.41E+00 & 7.29E+00 \\
          & time(s) & 9.41E-04 & 8.34E-04 & 1.04E-03 & 1.12E-03 \\
    \midrule
    \multirow{2}[2]{*}{CWLS} & err   & 1.06E-06 & 1.06E-04 & \textbf{1.05E-02} & 1.87E+00 \\
          & time(s) & 4.62E-04 & 5.52E-04 & 5.13E-04 & 4.83E-04 \\
    \bottomrule
    \end{tabular}%
  \label{tab:Ex2}%
\end{table}%

\item[E4.] \cite[Example 4.2]{ex2} %(Comparison of the \verb"FRMPA" and the other four methods)%\label{example3}
In this example, we generate 1000 implementations. In each implementation, {sensors $\mathbf{x}_j$ ($j=1,\cdots, n$)} and source $\mathbf{x}_{n+1}$ defined by {\rm(\ref{noisy})} are randomly generated by a uniform distribution on the square $ [-1000,1000]\times[-1000,1000]$. The observed distances $d_j$ between the sensors and the source are given by {\rm(\ref{noisy})} where there are normally distributed noises $\epsilon_j$ with mean zero and variance $20$.

The averaged results are shown in Table \ref{tab:Ex3}. We recorded the comparison between \verb"FRMPA" and {the other methods} when $n=4,5,8,10$. The third column is the number of runs out of 1000 in which the solution produced by the method was worse than the \verb"FRMPA" method in results of err defined in (\ref{err}). %The 4th column contains the mean of the number of iterations of each of the method over the 1000 instances. The last two columns contain the 'err' (defined by (\ref{err})) for each method.
As we can see in Table \ref{tab:Ex3}, \verb"FRMPA" and \verb"CWLS" have competitive performance, and outperforms other solvers. The reason is explained below. Comparing \verb"FRMPA" with \verb"CWLS", they return similar averaged squared error, whereas \verb"FRMPA" has more beater runs than \verb"CWLS" over the 1000 random instances. However, \verb"CWLS" is a bit faster than \verb"FRMPA".
{Compared with \verb"FRMPA", \verb"FRC" does not seem to perform as well as \verb"FRMPA" because the rank constraint $\rank(-J_{n+1}DJ_{n+1})\le r$ is not considered in convex model problem (\ref{convex}). We analyzed the proportions of eigenvalues in the figure and verified our conjecture. As we can see in Fig. \ref{fig:E4ratio}, Eigenratio of \verb"FRMPA"  is very steady at 100\%, while Eigenratio of \verb"FRC" is not very stable. This explains that why there are some examples that \verb"FRC" gives larger estimated error than \verb"FRMPA" does.}%The average of Eigenratio of \verb"FRC" and \verb"FRMPA" can be seen in Tabel \ref{tab:E4ratio}. We can see that as $n$ increases, Eigenratio of \verb"FRC" becomes more stable.

\begin{table}[htbp]
  \centering
  \caption{Comparison between FRMPA and the other methods for $n=4,5,8,10$ in E4}
    \begin{tabular}{ccccc}
    \toprule
    $n $    & Method $M$ & $\mathrm{err}_M>\mathrm{err}_{\mathrm{FRMPA}}$ & $\mathrm{err}_M$ & {time(s)} \\
    \midrule
    \multirow{7}[2]{*}{4} & FRMPA &   -    & 2.55E+03 & 4.29E-03 \\
          & FRC   & 528   & 2.73E+03 & 5.68E-03 \\
          & LagD  & 487   & 3.56E+03 & 1.13E-01 \\
          & FNEDM & 510   & 6.51E+04 & 4.20E-01 \\
          & SR-LS & 630   & 2.92E+03 & 2.07E+00 \\
          & SFP   & 514   & 7.97E+04 & 4.03E-01 \\
          & CWLS  & 500   & \textbf{2.49E+03} & \textbf{4.03E-04} \\
    \midrule
    \multirow{7}[2]{*}{5} & FRMPA &  -     & \textbf{6.26E+02} & 3.55E-03 \\
          & FRC   & 516   & \textbf{6.26E+02} & 4.97E-03 \\
          & LagD  & 512   & 6.22E+02 & 1.16E-01 \\
          & FNEDM & 492   & 4.68E+04 & 4.31E-01 \\
          & SR-LS & 663   & 9.58E+02 & 3.39E+00 \\
          & SFP   & 498   & 5.87E+04 & 2.84E-01 \\
          & CWLS  & 516   & 6.35E+02 & \textbf{2.84E-04} \\
    \midrule
    \multirow{7}[2]{*}{8} & FRMPA &   -    & \textbf{2.69E+02} & 2.91E-03 \\
          & FRC   & 509   & \textbf{2.69E+02} & 5.46E-03 \\
          & LagD  & 483   & \textbf{2.69E+02} & 1.60E-01 \\
          & FNEDM & 506   & 6.17E+03 & 5.36E-01 \\
          & SR-LS & 661   & 4.39E+02 & 8.08E-03 \\
          & SFP   & 525   & 9.88E+03 & 4.65E-01 \\
          & CWLS  & 507   & 2.72E+02 & \textbf{4.65E-04} \\
    \midrule
    \multirow{7}[2]{*}{10} & FRMPA &   -    & \textbf{2.16E+02} & 2.97E-03 \\
          & FRC   & 528   & \textbf{2.16E+02} & 5.19E-03 \\
          & LagD  & 502   & \textbf{2.16E+02} & 1.60E-01 \\
          & FNEDM & 511   & 4.88E+03 & 6.48E-01 \\
          & SR-LS & 652   & 3.32E+02 & 1.06E+01 \\
          & SFP   & 492   & 8.22E+03 & 4.56E-01 \\
          & CWLS  & 546   & 2.18E+02 & \textbf{4.56E-04} \\
    \bottomrule
    \end{tabular}%
  \label{tab:Ex3}%
\end{table}%

\begin{figure}[htbp]
  \vspace{15pt}
  \centering
  \begin{minipage}{.48\linewidth}
\includegraphics[width=1\textwidth]{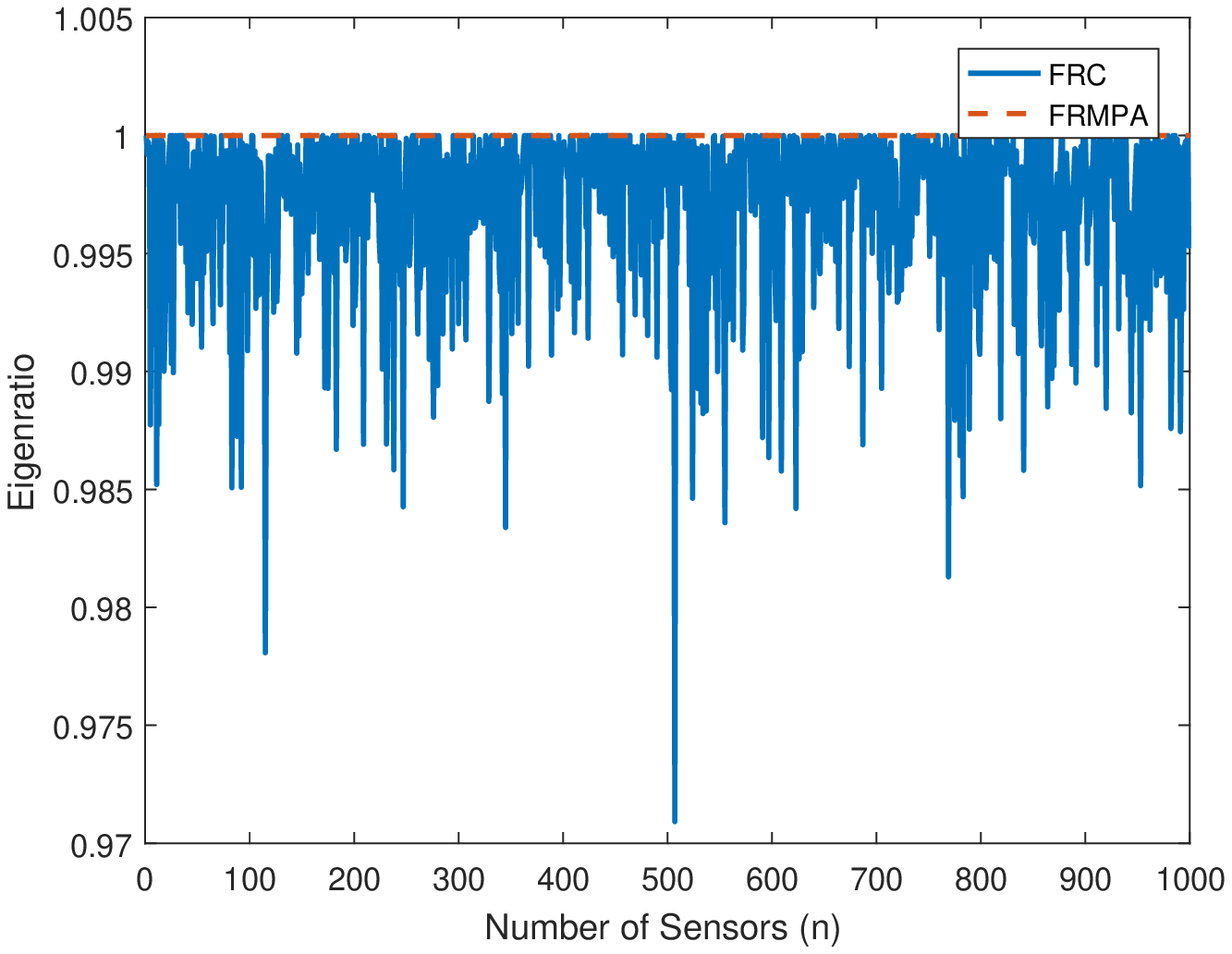}%\vspace{-10pt}
  \caption*{(a)\ $n=4$}
  \end{minipage}
  \begin{minipage}{.48\linewidth}
\includegraphics[width=1\textwidth]{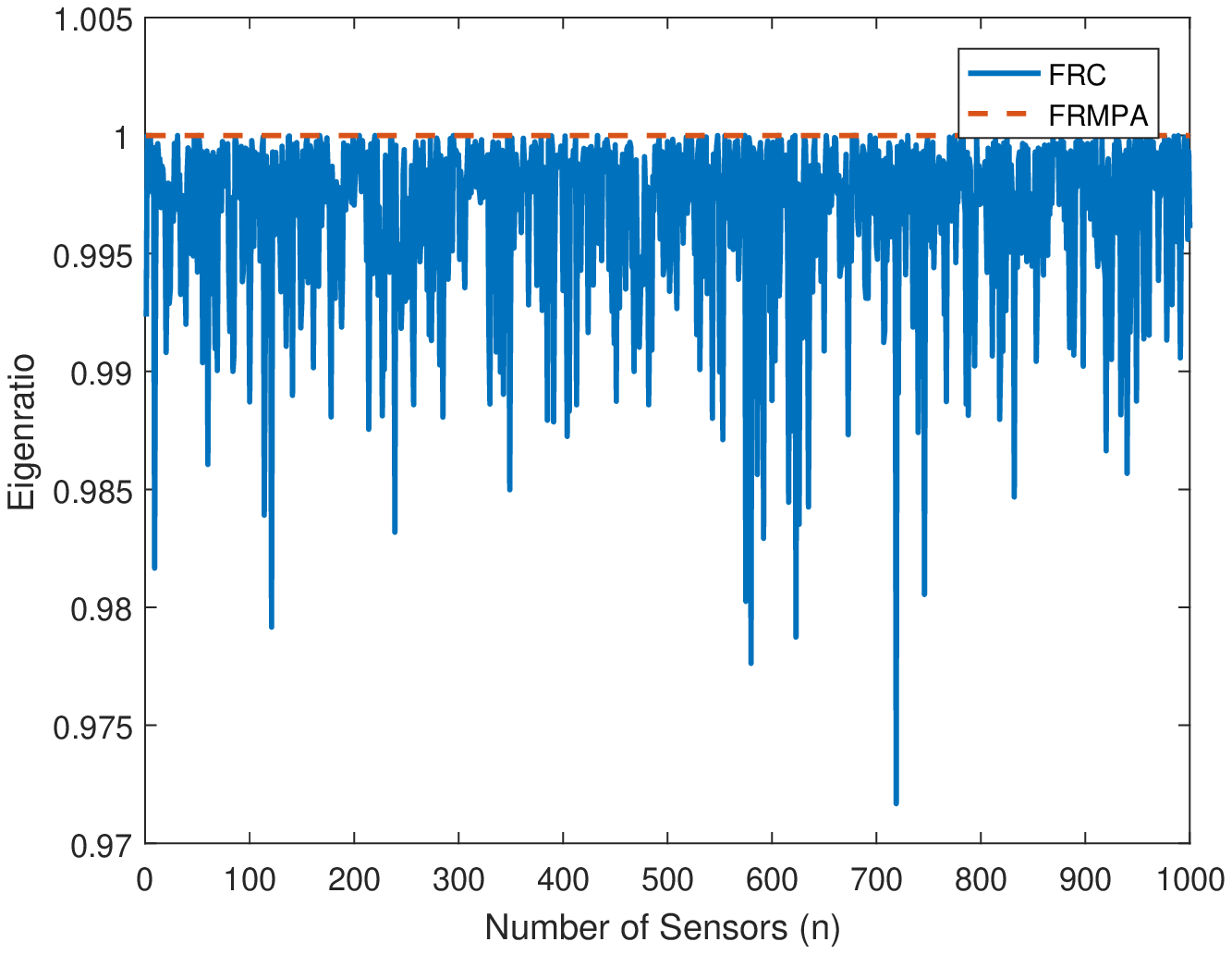}%\vspace{-10pt}
  \caption*{(b)\ $n=5$}
  \end{minipage}
  \begin{minipage}{.48\linewidth}
\includegraphics[width=1\textwidth]{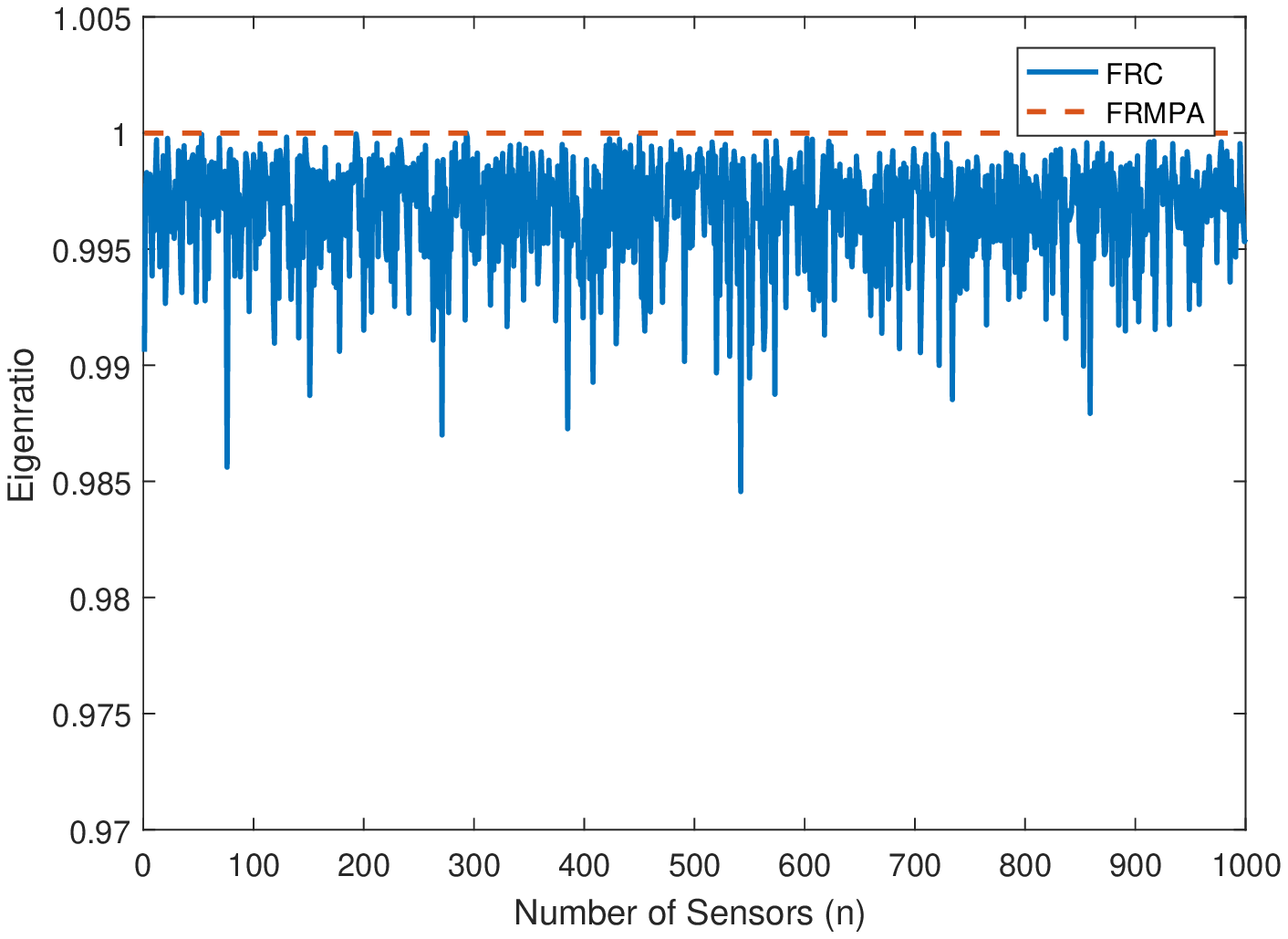}%\vspace{-10pt}
  \caption*{(c)\ $n=8$}
  \end{minipage}
  \begin{minipage}{.48\linewidth}
\includegraphics[width=1\textwidth]{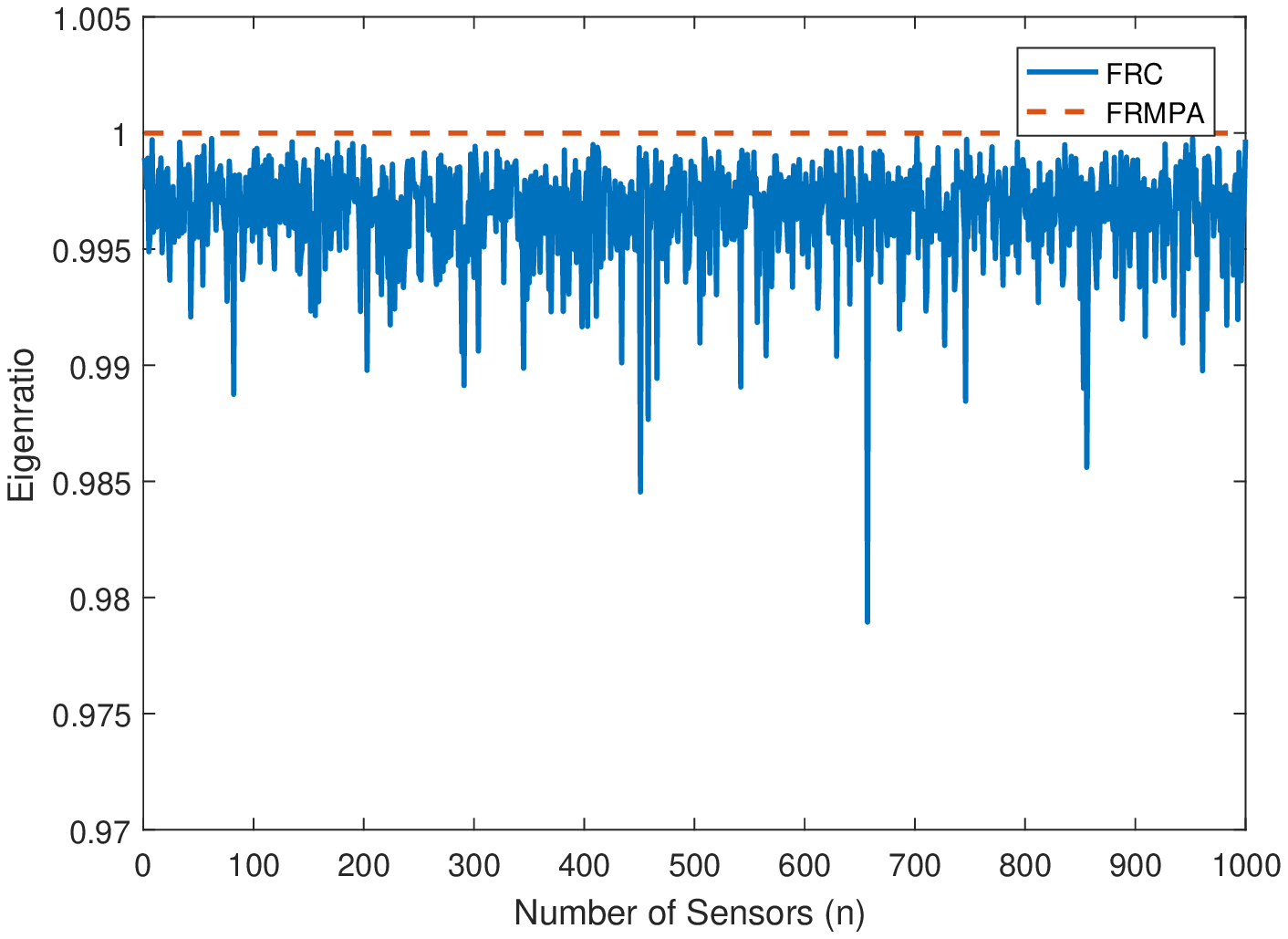}%\vspace{-10pt}
  \caption*{(d)\ $n=10$}
  \end{minipage}
 \caption{The comparison results of Eigenratio for FRC and FRMPA in E4}\label{fig:E4ratio}
 \vspace{0pt}
\end{figure}
\eit

It should be noted that \verb"CWLS" is only restricted to the case of $r=2$. Therefore, below, we test some examples with $r=3$.

\bit
%\begin{example}{\rm\cite{psd}}\label{example4}
\item[E5.] \cite{psd} Similar to E2, the sensors $\bfx_j$ and {the} source $\bfx_{n+1}$ are randomly and uniformly distributed in $[-10,10]\times [-10,10]\times[-10,10]$, i.e., $r=3$. Unlike {\rm E2}, we randomly generate data with an noise that uniformly distributed, i.e., the observed distances between sensors and the source are
$$
\Delta_{n+1,i}=\Delta_{i,n+1}=\|\bfx_i-\bfx_{n+1}\|_2(1+\varepsilon_i),
$$
where $\varepsilon_i\in U(-\eta,\eta)$ is a uniformly distributed noise with noise factor $\eta$. {The} relative error $c^M_{re}$ between the true position of the source and the position obtained by method $M$ is given by
\be
c^M_{re}:=\frac{\|\bar\bfx_{M}-\bfx_{n+1}\|_2}{\|\bfx_{n+1}\|_2}.
\ee
We first test the special case of $\eta=0.2$. {Fig.} \ref{fig:Ex4-1} shows that the relative error decreases with the increase of $n$. {The solutions given by \verb"SR-LS" are not as good as the other methods.} For \verb"LagD" and \verb"FNEDM", it takes much more time, however, the other methods remain fast.
%\begin{figure}[H]
%  \vspace{15pt}
%  \centering
%  \begin{minipage}{.48\linewidth}
%\includegraphics[width=1\textwidth]{E5err2e-3.eps}%\vspace{-10pt}
%  \caption*{(a)\ Relative error $c^M_{re}$}
%  \end{minipage}
%  \begin{minipage}{.48\linewidth}
%\includegraphics[width=1\textwidth]{E5time2e-3.eps}%\vspace{-10pt}
%  \caption*{(b)\ Computational time}
%  \end{minipage}
% \caption{Numerical results for $\eta=0.002$ as $n$ changes in E5}\label{fig:Ex4-1}
% \vspace{0pt}
%\end{figure}

\begin{figure}[htbp]
  \vspace{15pt}
  \centering
  \begin{minipage}{.48\linewidth}
\includegraphics[width=1\textwidth]{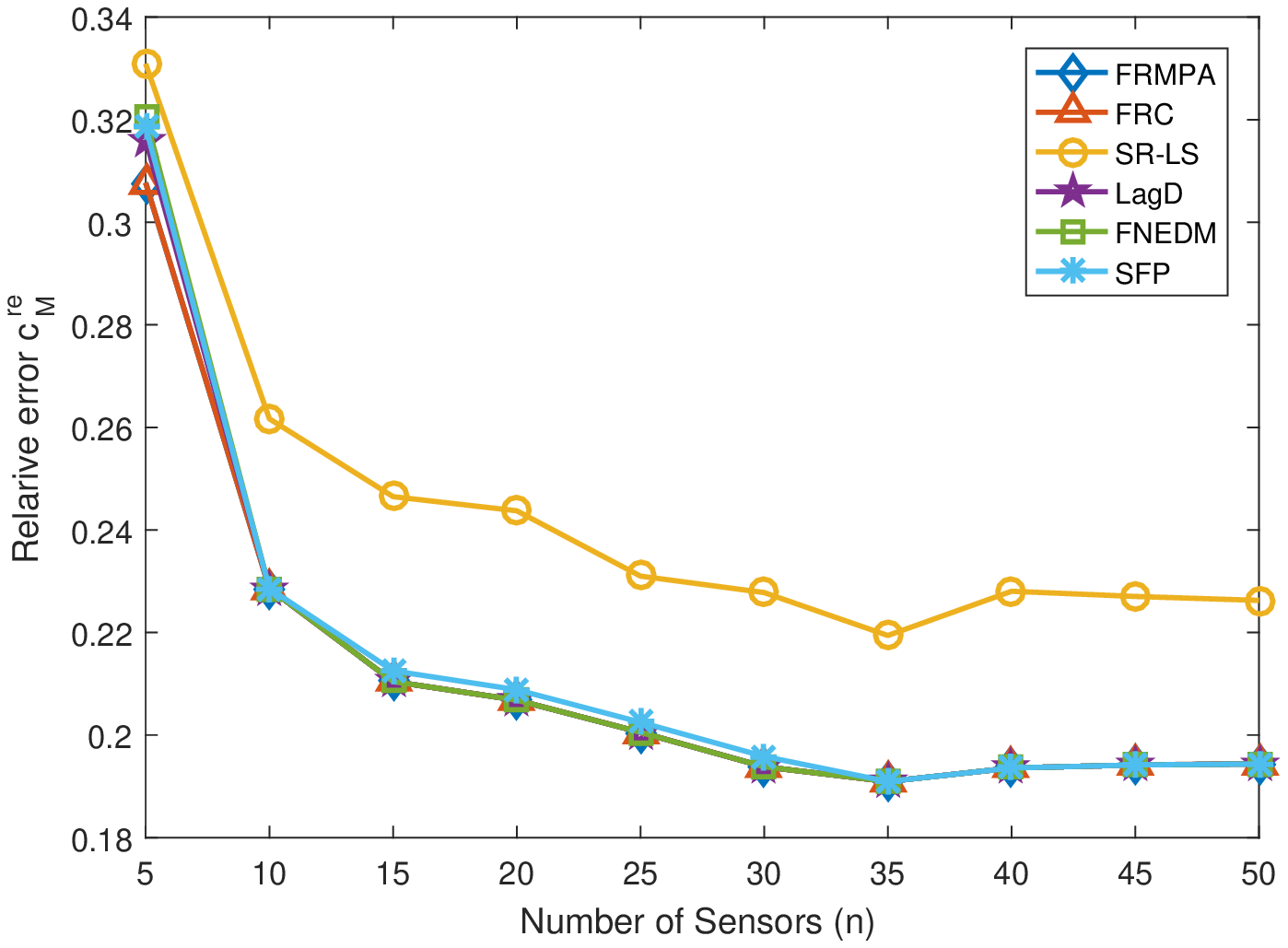}%\vspace{-10pt}
  \caption*{(a)\ Relative error $c^M_{re}$}
  \end{minipage}
  \begin{minipage}{.48\linewidth}
\includegraphics[width=1\textwidth]{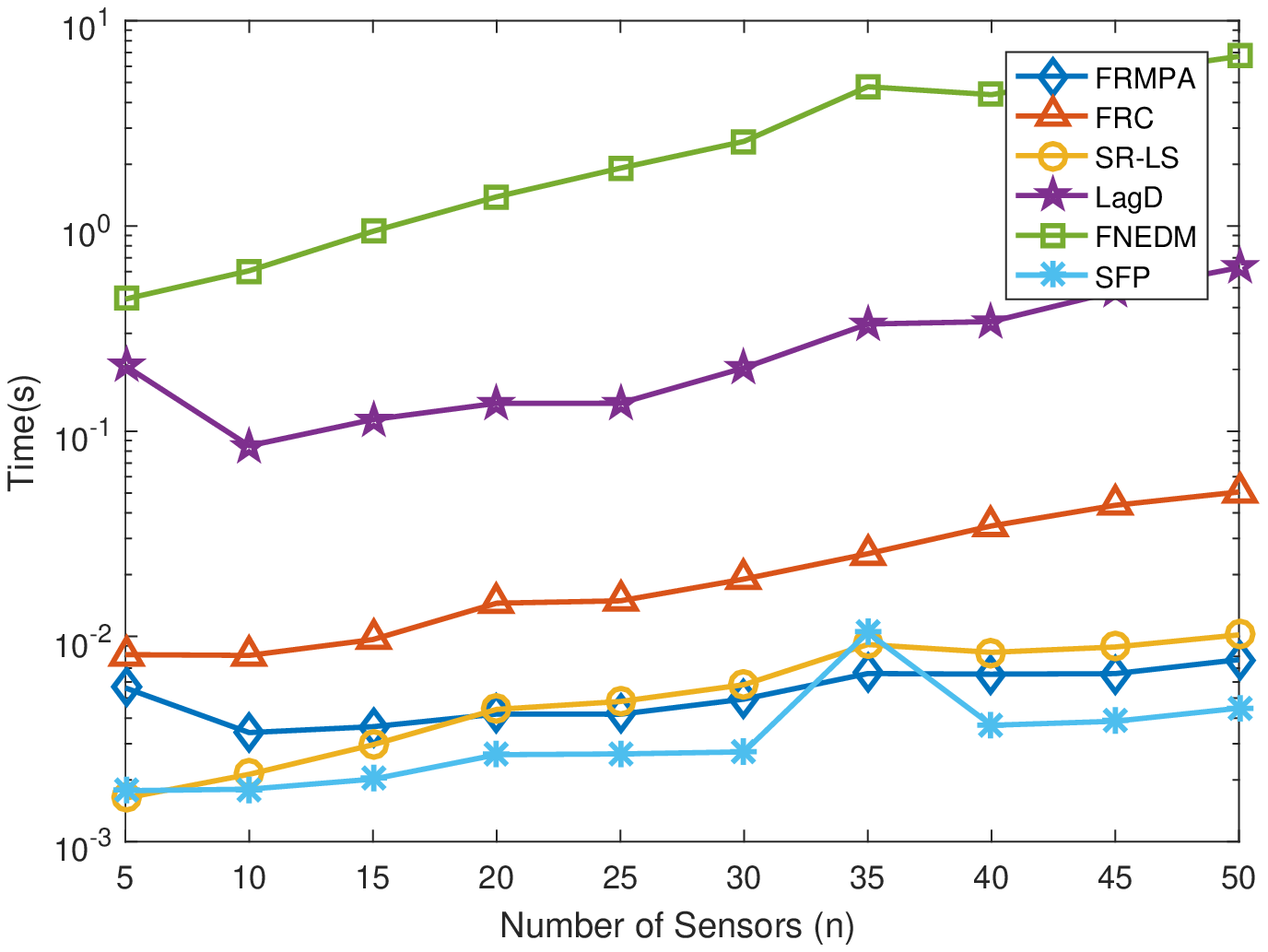}%\vspace{-10pt}
  \caption*{(b)\ Computational time}
  \end{minipage}
 \caption{Numerical results for $\eta=0.2$ as $n$ changes in E5}\label{fig:Ex4-1}
 \vspace{0pt}
\end{figure}

Below we use performance profile to further evaluate the performance of each method. The performance profile is a plot that shows the general performance of all
the solvers. The $x$-axis represents the parameter $\tau$. It represents a ratio between a solver and the winner (such as relative error or cputime). That is, it describes its relative performance. The $y$-axis represents {the} probability $\psi_M(\tau)$ of problems for which each solver $M$ can get the best solver through $\tau$. For more details, see \cite{psd}.
The performance profiles can be seen in {Fig.} \ref{fig:Ex4}. The figure contains the performance profiles for the relative error $c_{re}^M$ and cputime.
In most of the cases, {Fig.} \ref{fig:Ex4}a shows that the {six} methods exhibit approximately good performance. \verb"FRMPA" obtains the best relative error for almost 100\% of the problem instances. {\verb"LagD" and \verb"FRC" is slightly worse than \verb"FRMPA".} The chances for the rest of the methods of winning are small, especially for \verb"SFP". Therefore, although \verb"SFP" wins in cputime as demonstrated in {Fig.} \ref{fig:Ex4}b compared {with} \verb"FRMPA", it has a poor performance in relative error.
{Fig.} \ref{fig:Ex4}b presents the performance profile for computational time. It seems that \verb"SR-LS" consumes less time. However, when $\tau $ increases, \verb"FRMPA" requires less time to get the final result. {\verb"FRC" is the fastest of the remaining three matrix optimization methods.} \verb"FNEDM" consumes {$10^{2}$ times or even $10^3$ times of cputime than that by \verb"FRMPA",} to achieve similar percentage of success. Compared with the other {three} EDM formulations \verb"FRMPA", {\verb"FRC"} and \verb"LagD", it is also clear that the SDP formulation consumes more time.
%\end{example}

\begin{figure}[H]
  \vspace{15pt}
  \centering
  \begin{minipage}{.48\linewidth}
\includegraphics[width=1\textwidth]{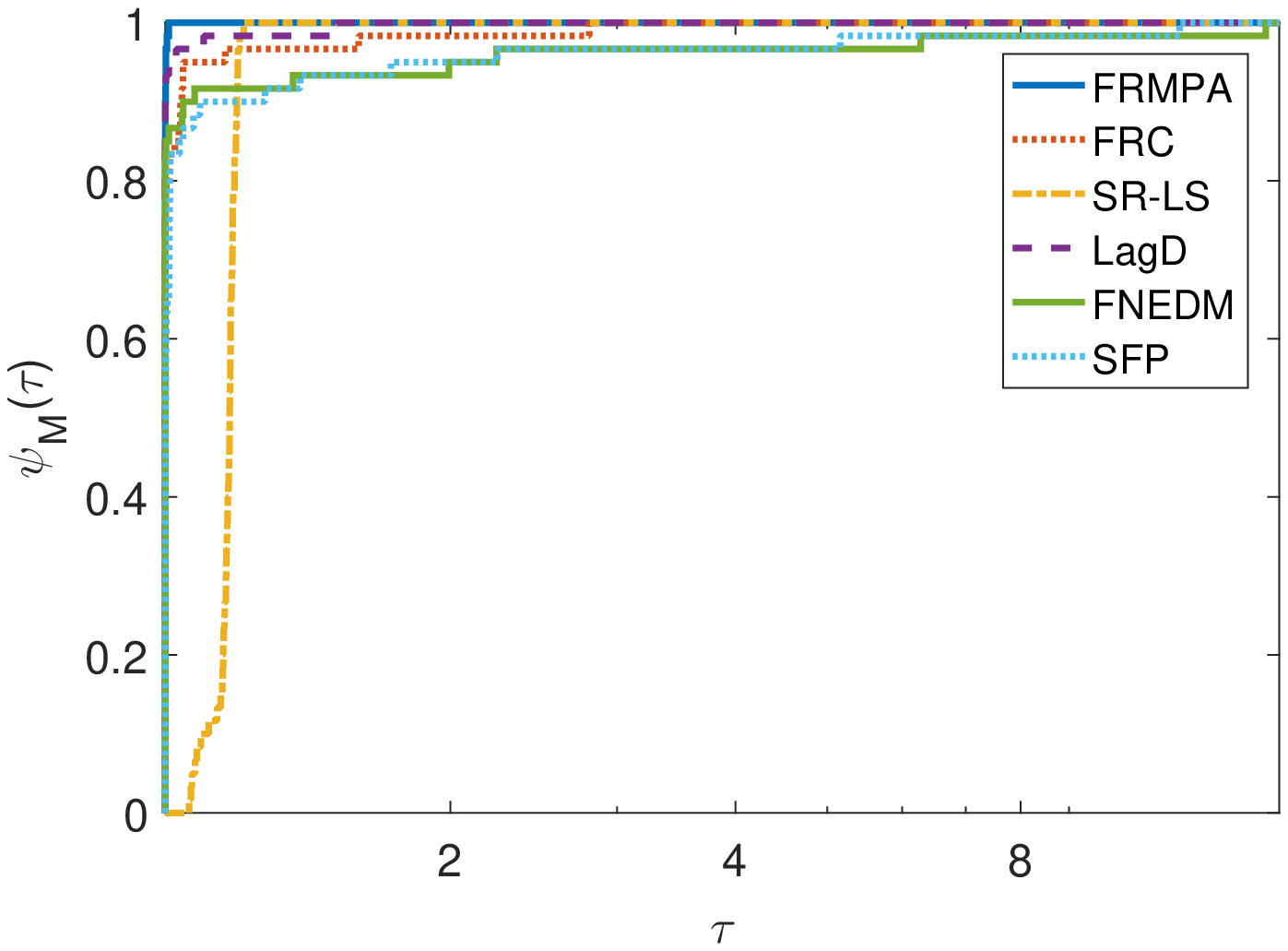}%\vspace{-10pt}
  \caption*{(a)\ Relative error $c^M_{re}$}
  \end{minipage}
  \begin{minipage}{.48\linewidth}
\includegraphics[width=1\textwidth]{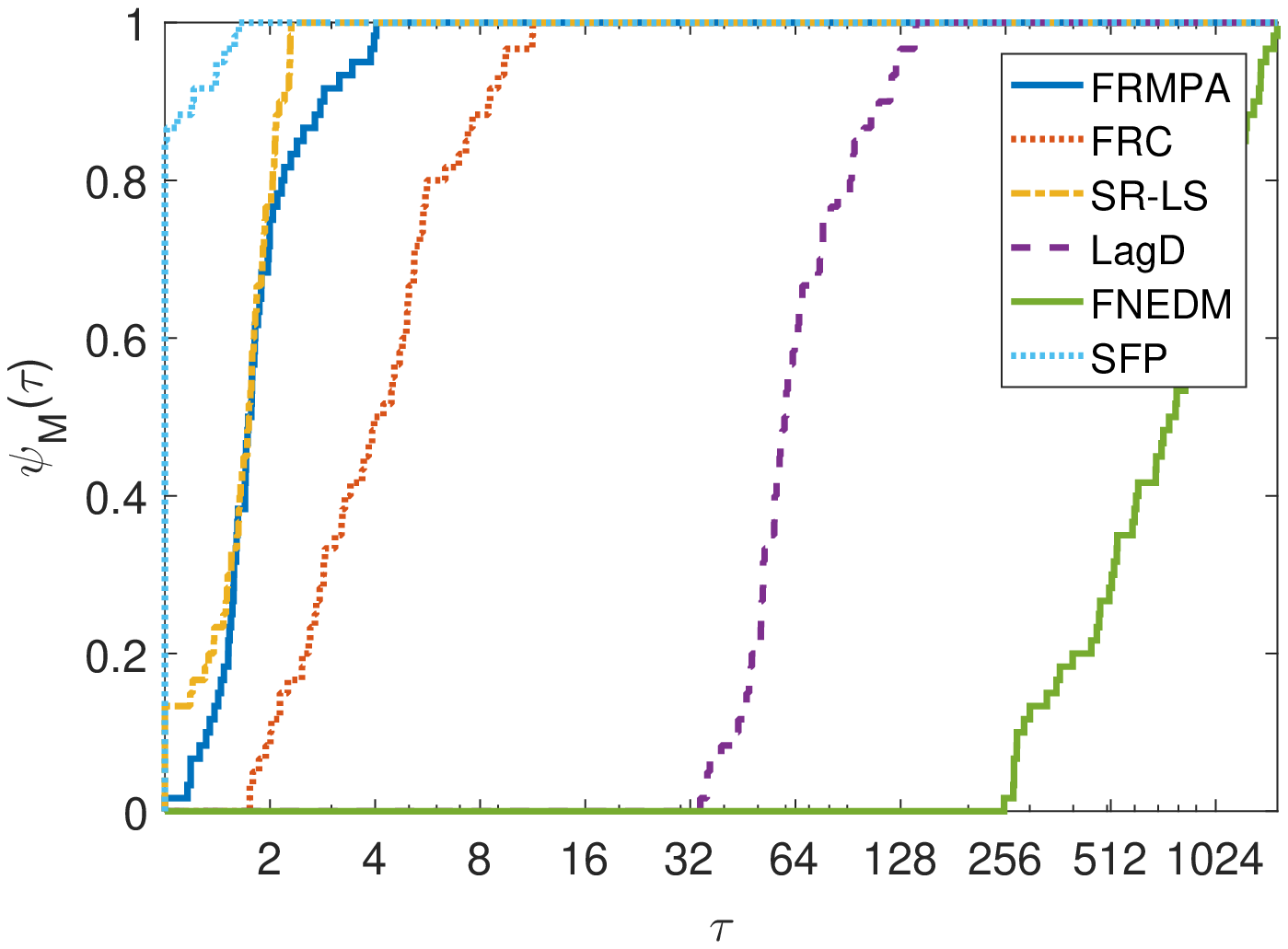}%\vspace{-10pt}
  \caption*{(b)\ Computational time}
  \end{minipage}
 \caption{E5: performance profiles for the 100 random tests when $n=[5,10,15,20,25,30,35,40,45,50], \ \eta=[0.002,0.005,0.02,0.05,0.2,0.5]$}\label{fig:Ex4}
 \vspace{0pt}
\end{figure}
%

%Although The method \verb"Cheung_TOA" looks better, it only works in two dimensions. It fails in 3-dimensional case.

%\begin{example}\label{example7}
\item[E6. ] The data are tested in \cite[Example 2]{chenmeng}. In this example, we consider 3-dimensional case. Consider the distance measurement model in (\ref{noisy}). The sensors are $\bfx_1=(1,0,0),\ \bfx_2=(0,2,0), \ \bfx_3=(-2,-1,0),\ \bfx_4=(0,0,2),\ \bfx_5=(0,0,-1)$. The true source $\bfx_6$ is first at $(-1,1,1)$ (outside the convex hull of sensors) and then at $(0,0,0)$ (inside the convex hull). The noises $\epsilon_j$ are normally distributed with mean zero and variance between $10^{-1}$ and $10^{0}$. As we found that the 'err' among \verb"FRMPA", \verb"LagD" and \verb"FNEDM" are almost the same, we only compare \verb"FRMPA" {and \verb"FRC"} with the other two methods (\verb"SFP" and \verb"SR-LS"). As we can see in {Fig.} \ref{fig:Ex7}, \verb"FRMPA" {and \verb"FRC"} always {have} a good estimate whether the source in the convex hull or not. In other words, \verb"FRMPA" {and \verb"FRC" have more better} performance than the other two methods.
\begin{figure}[H]
  \vspace{15pt}
  \centering
  \begin{minipage}{.48\linewidth}
\includegraphics[width=1\textwidth]{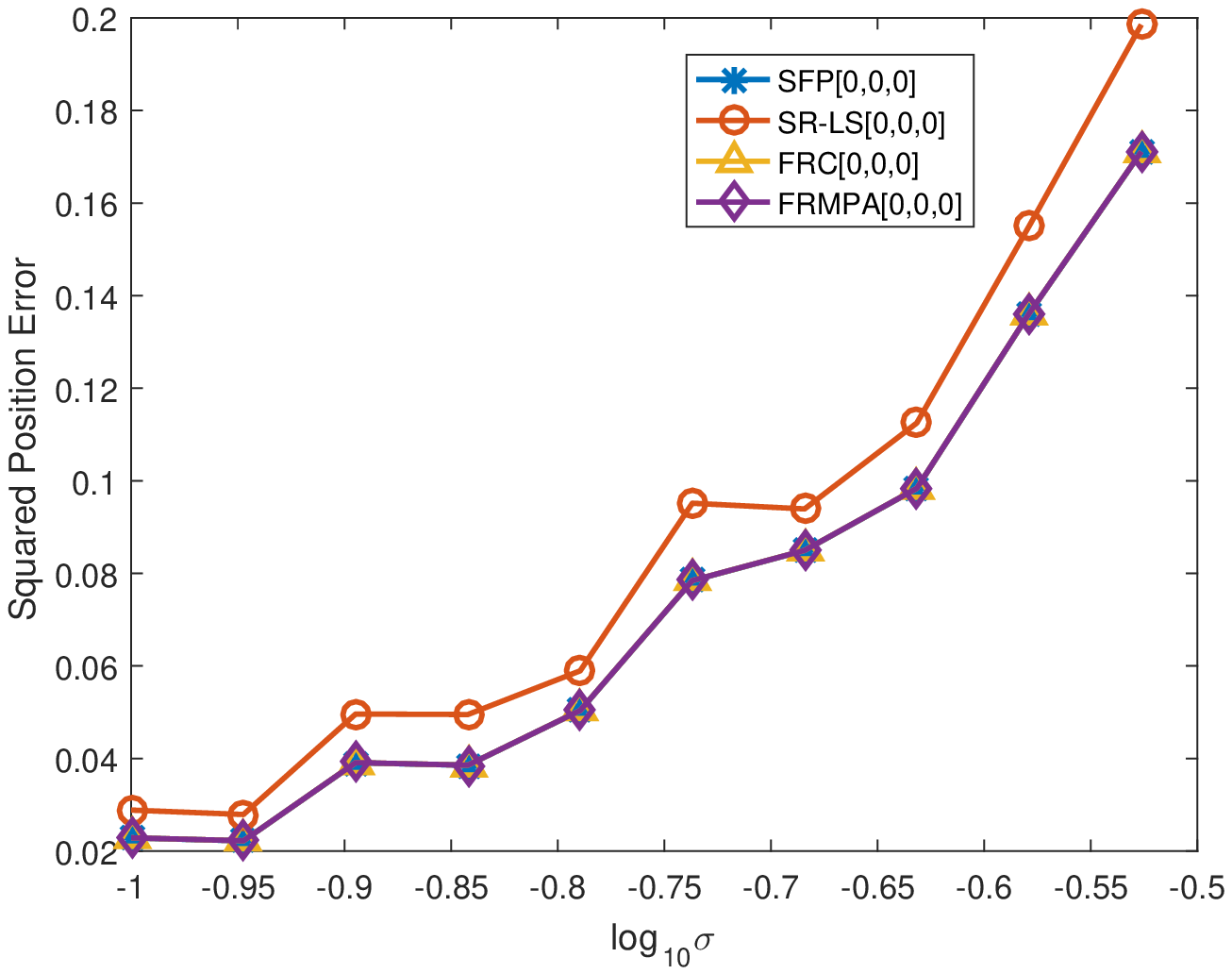}%\vspace{-10pt}
  \caption*{(a)\ $\sigma\in[10^{-1},10^{-0.5}]$}
  \end{minipage}
  \begin{minipage}{.48\linewidth}
\includegraphics[width=1\textwidth]{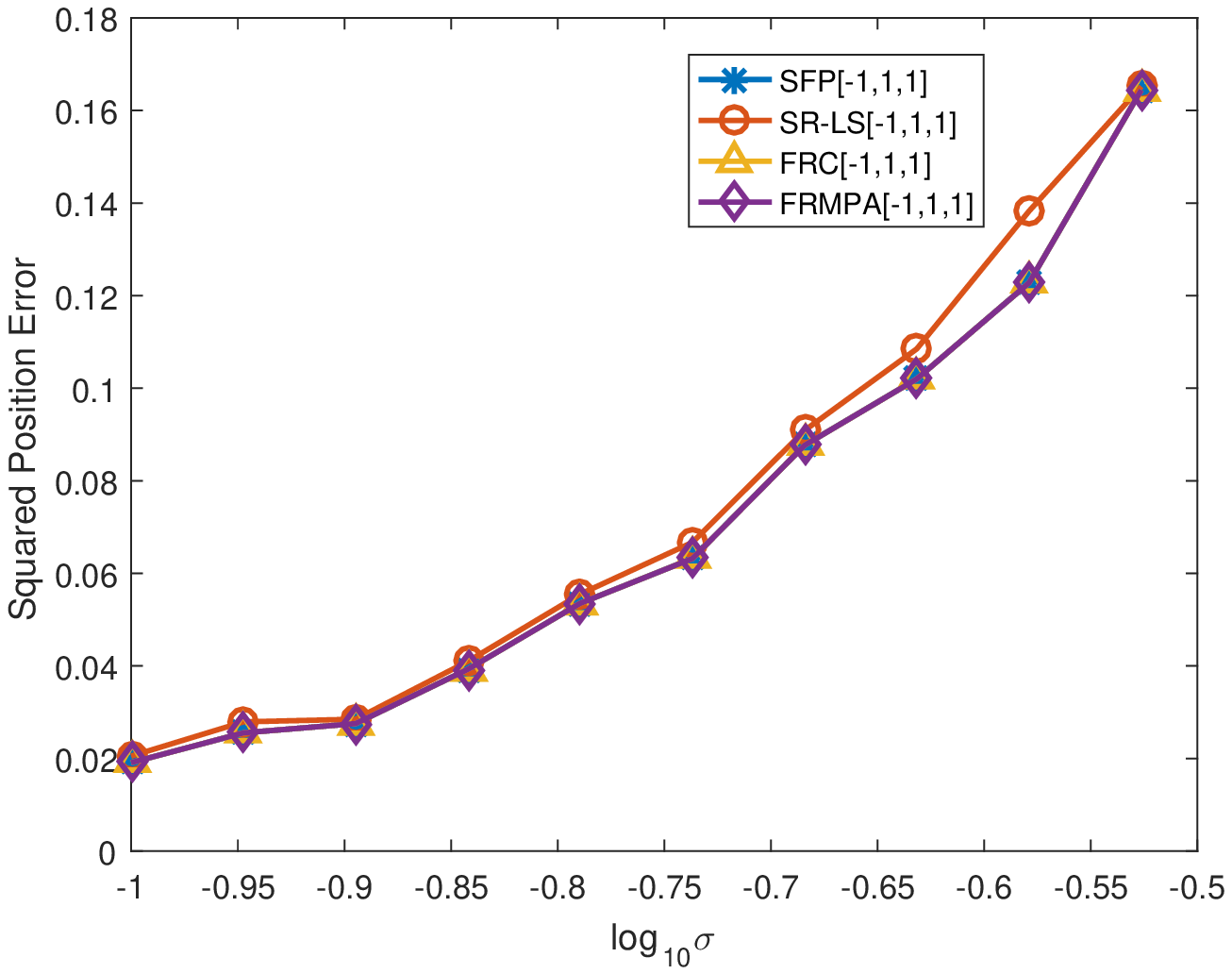}%\vspace{-10pt}
  \caption*{(b)\ $\sigma\in[10^{-1},10^{-0.5}]$}
  \end{minipage}
  \begin{minipage}{.48\linewidth}
\includegraphics[width=1\textwidth]{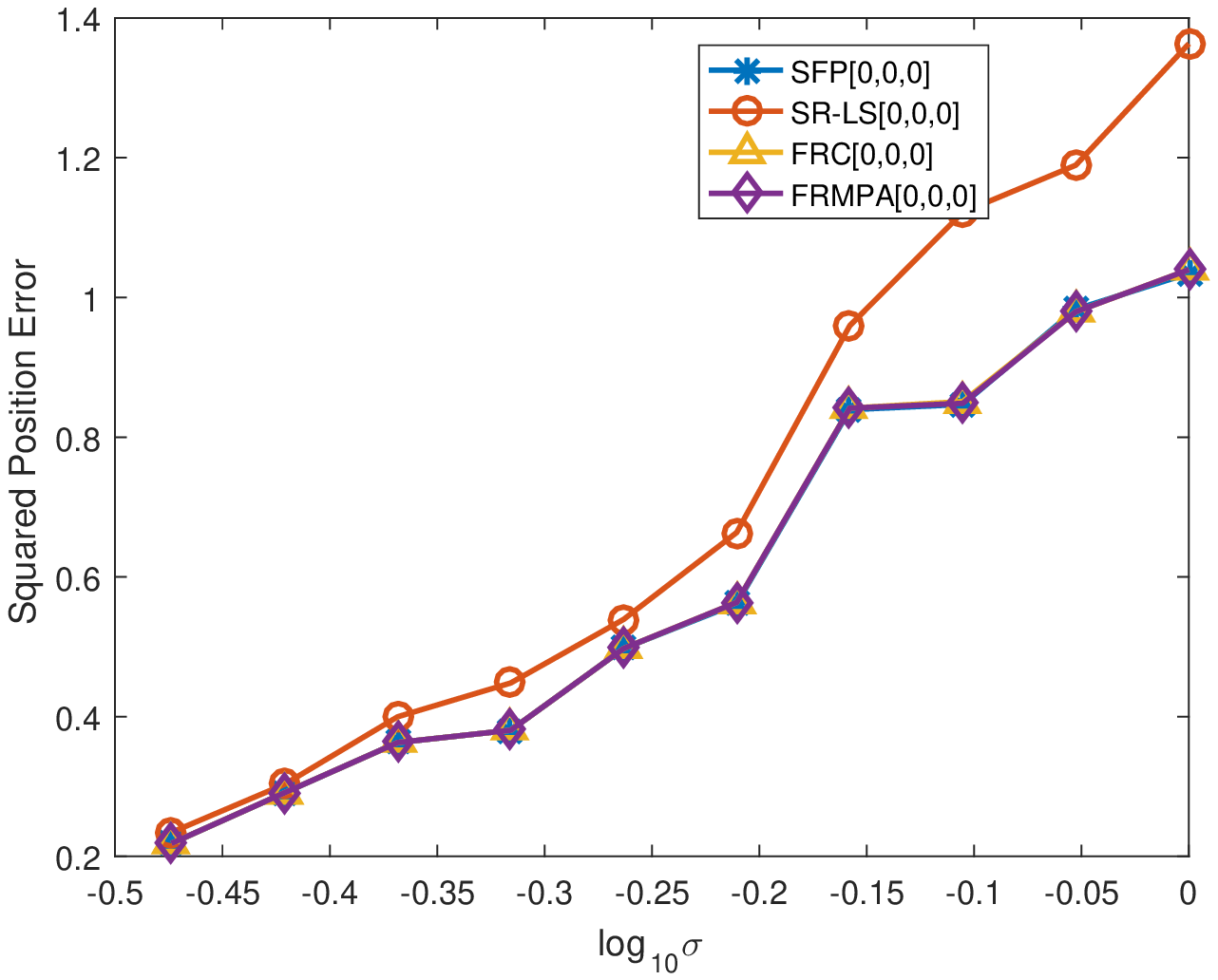}%\vspace{-10pt}
  \caption*{(c)\ $\sigma\in[10^{-0.5},10^{0}]$}
  \end{minipage}
  \begin{minipage}{.48\linewidth}
\includegraphics[width=1\textwidth]{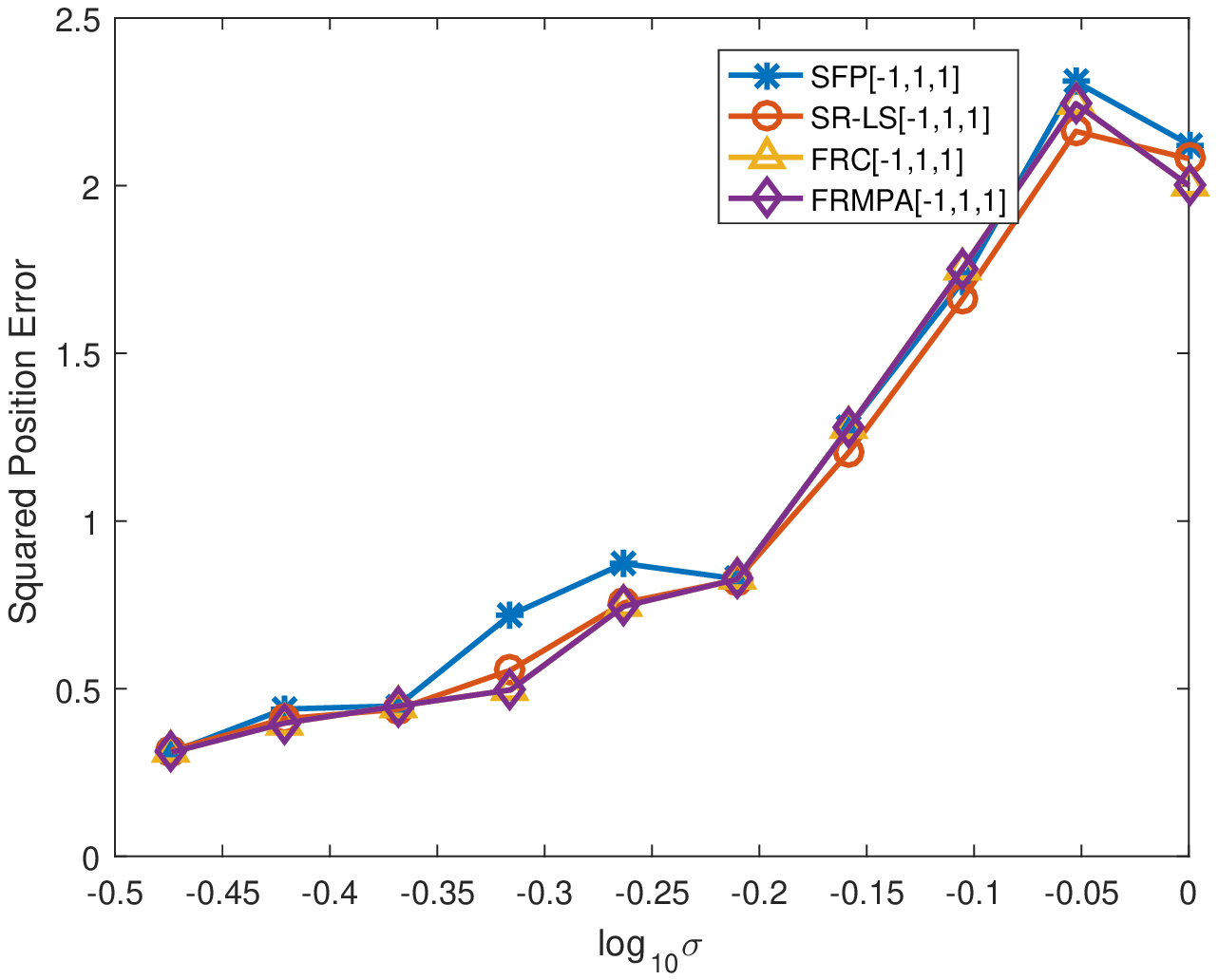}%\vspace{-10pt}
  \caption*{(d)\ $\sigma\in[10^{-0.5},10^{0}]$}
  \end{minipage}
 \caption{The 3-dimensional error results in E6}\label{fig:Ex7}
 \vspace{0pt}
\end{figure}
%\end{example}
\eit

{To summarize, \verb"FRMPA" and \verb"FRC" are competitive and enjoy advantages in terms of the solution quality and computational time over the rest methods. \verb"FRC" and \verb"FRMPA" sometimes perform similar, for example, E1, E2, E3 and E6. However, in E4 and E5, \verb"FRC" does not seem to perform as well as \verb"FRMPA". Comparing \verb"FRC" with \verb"FRMPA", \verb"FRMPA" is faster and more stable since it takes the rank constraint into account.}

\section{Conclusions}\label{sec5}
In this paper, we proposed a novel EDM model based on facial reduction. In theory, we derive the minimal face containing constraint (\ref{ft}) and express it as a closed formulation related to its exposing vector. We prove that constraint nondegeneracy is valid for every feasible point in the convex relaxation case. In terms of algorithm, we use the majorized penalty approach proposed by \cite{qi2018} whose subproblem admits closed form solution. The algorithm is simple and efficient. Numerical results show that the EDM model based on facial reduction performs well both in the quality of the solution and the speed.

\newpage
\noindent\textbf{Acknowledgments. \ }
{We would like to thank the editor for handling our paper, as well as two anonymous reviewers for their valuable comments, based on which the paper was further improved. We would also like to thank Professor Houduo Qi from the University of Southampton, UK, for his insightful suggestions on the derivation of $\face(\Omega,\mathcal{E}^{n+1})$.}
%{We would like to thank the reviewers and Professor Qi Houduo for their detailed and valuable comments, which helped improve our paper significantly. }
% and are superior to LagD proposed in \cite{qiLAG}.

%\newpage


\begin{thebibliography}{[1]}
%A
%\bibitem{Alfakih_J}A. Y. Alfakih, On rigidity and realizability of weighted graphs. Linear Algebra Appl., 325(1-3) (2001) 57-70. %https://doi.org/10.1016/S0024-3795(00)00281-0.
\bibitem{Alfakih_remark}A. Y. Alfakih, A remark on the faces of the cone of Euclidean distance matrices, Linear Algebra Appl., 414(1) (2006) 266-270.
\bibitem{Alfakih_M}A. Y. Alfakih, Euclidean distance matrices and their applications in rigidity theory, Springer, 2018.
%\bibitem{nedm2005}S. Al-Homidan, and H. Wolkowicz, Approximate and exact completion problems for Euclidean distance matrices using semidefinite programming, Linear algebra and its applications, 406, 2005 ,109-141.
%B
\bibitem{CN} S. Bai, H. D. Qi, N. Xiu, Constrained best euclidean distance embedding on a sphere: a matrix optimization approach, SIAM J. Optim., 25(1) (2015) 439-467.
\bibitem{Barker1981}G. P. Barker, Theory of cones, Linear Algebra Appl., 39 (1981) 263-291.
\bibitem{ex1}A. Beck, P. Stoica, J. Li, Exact and approximate solutions of source localization problems, IEEE Trans. Signal Process., 56 (2008) 1770-1778.
\bibitem{ex2} A. Beck, M. Teboulle, Z. Chikishev, Iterative minimization schemes for solving the single source localization problem, SIAM J. Optim., 19 (2008) 1397-1416.
\bibitem{SNLSDP}P. Biswas, T. C. Liang, K. C. Toh, T. C. Wang, Y. Ye, Semidefinite programming approaches for sensor network localization with noisy distance measurements, IEEE Trans. Autom. Sci. Eng., 3 (2006) 360-371.
\bibitem{Henry1981}J. M. Borwein, H. Wolkowicz, Facial reduction for a cone-convex programming problem, J. Austral. Math. Soc., 30(3) (1981) 369-380.
\bibitem{Henry1981-2}J. Borwein, H. Wolkowicz, Regularizing the abstract convex program, J. Math. Anal. Appl., 83(2) (1981) 495-530.

%C
\bibitem{monitor}T. Camp, J. Boleng, V. Davies, A survey of mobility models for ad hoc network research, Wireless Communications and Mobile Computing, 2(5) (2010) 483-502.
%\bibitem{ding}Chao Ding and Hou-Duo Qi, Convex Euclidean distance embedding for collaborative position localization with NLOS mitigation, Comput. Optim. Appl. 66(1) (2017) 187–218.
\bibitem{chenmeng}M. Chen, D. Zhi, S. Dasgupta, A semidefinite programming approach to source localization in wireless sensor networks, IEEE Signal Processing Letters, 15 (2008) 253-256.
\bibitem{Cheung}{K. W. Cheung, H. C. So, W. K. Ma, Y. T. Chan, A constrained least squares approach to mobile positioning: algorithms and optimality, EURASIP J. Adv. Signal Process., 2006 (2006) 1-23. } %https://doi.org/10.1155/ASP/2006/20858
\bibitem{Kmap}F. Critchley, On certain linear mappings between inner-product and squared-distance matrices, Linear Algebra Appl., 105 (1988) 91-107.
%D
%\bibitem{nedm2010}Y. Ding, N. Krislock, J. Qian, H. Wolkowicz, Sensor network localization, Euclidean distance matrix completions, and graph realization, Optimization and Engineering, 11(1), 2010, 45–66
\bibitem{echo}I. Dokmanic, R. Parhizkar, A. Walther, Y. M. Lu, M. Vetterli,  Acoustic echoes reveal room shape, Proc. Natl. Acad. {Sci.,} 110(30) (2013) 12186-12191.
\bibitem{psd14}{D. Drusvyatskiy, G. Pataki, H. Wolkowicz, Coordinate shadows of semidefinite and Euclidean distance matrices, SIAM J. Optim., 25(2) (2015) 1160-1178.}
%\bibitem{ExpVecEDM}D. Drusvyatskiy, N. Krislock, Y. L. Voronin, and H. Wolkowicz, Noisy euclidean distance realization: robust facial reduction and the pareto frontier, SIAM J. Optim., 27(4) (2015).
%E


%F
%\bibitem{refine}X. Fang and K. C. Toh, Using a distributed SDP approach to solve simulated protein molecular conformation problems, Distance Geometry, Springer, New York, NY, (2013) 351-376.

%G
%\bibitem{Gaffke}N. Gaffke and R. Mathar, A cyclic projection algorithm via duality, Metrika, 36, 1989, 29-54.
%\bibitem{nedm1990}W. Glunt, T. L. Hayden, S. Hong and J. Wells, An alternating projection algorithm for computing the nearest Euclidean distance matrix, SIAM Journal on Matrix Analysis and Applications, 11(4), 1990, 589-600.
%\bibitem{gaosun} Y. Gao, D. Sun, A majorized penalty approach for calibrating rank constrained correlation matrix problems, Technical Report, Department of Mathematics, National University of Singapore, March 2010.
%\bibitem{gao}Y. Gao, Structured Low Rank Matrix Optimization Problems: a Penalty Approach, PhD Thesis, National University of Singapore, 2010.
\bibitem{navigating}G. Gartner, F. Ortag, A survey of mobile indoor navigation systems, Cartography in Central and Eastern Europe, Heidelberg, Germany: Springer, (2010) {305-319}.
\bibitem{micro2}N. Gaubitch, W. Kleijn, R. Heusdens, Auto-localization in ad-hoc microphone arrays, Proc. IEEE Int. Conf. on Acoust. Speech and Signal Processing, (2013) {106-110}.
\bibitem{Procrustes}G. H. Golub, C. F. Van Loan, Matrix computations, Johns Hopkins University Press, Baltimore, 1996.
%\bibitem{Gower}J. C. Gower, Properties of Euclidean and non-Euclidean distance matrices. Linear Algebra Appl., 67 (1985) 81-97.

%H
%\bibitem{Hayden1991}T.L. Hayden, J. Wells, W. Liu, and P. Tarazaga, The cone of distance matrices, Linear Algebra and its Applications,  144, 1991, 153-169.
%\bibitem{Hestenes}M. R. Hestenes and E. Stiefel, Methods of conjugate gradients for solving linear systems, Journal of Research of the National Bureau of Standards, 49, 1952, 409-436.
\bibitem{Hill1987}R. D. Hill, S. R. Waters, On the cone of positive semidefinite matrices, Linear Algebra Appl., 90 (1987) 81-88.

%I


%J


%K
\bibitem{SFSDP}S. Kim, M. Kojima, H. Waki, Exploiting sparsity in SDP relaxation for sensor network localization, SIAM J. Optim., {20 (2009) 192-215.}
%\bibitem{qi2019}L. Kong, C. Qi, and H. D. Qi, Classical Multidimensional Scaling: A Subspace Perspective, Over-Denoising, and Outlier Detection, IEEE Transactions on Signal Processing, 67(14), 2019, 3842-3857.
%\bibitem{facexuewei}N. Krislock, Semidefinite facial reduction for low-rank euclidean distance matrix completion, 2010.
%\bibitem{Henry_shu}N. Krislock and H. Wolkowicz, Euclidean Distance Matrices and Applications, International Series in Operations Research and Management Science, 2012.
\bibitem{Henry_shu}{N. Krislock, H. Wolkowicz, Euclidean distance matrices and applications, In Handbook on semidefinite, conic and polynomial optimization, Springer, Boston, MA, (2012) 879-914.}
\bibitem{Kruskal}J. B. Kruskal, Nonmetric multidimensional scaling: a numerical method, Psychometrika, {29(2) (1964) 115-129.}

%L
\bibitem{edmoc}{S. T. Lu, M. Zhang, Q. N. Li, Feasibility and a fast algorithm for euclidean distance matrix optimization with ordinal constraints, Comput. Optim. Appl., 76(2) (2020) 535-569.}
%\bibitem{svm}S. T. Lu and Q. N. Li, A majorization penalty method to SVM with sparse constraint, 2021, submitted.
%M
\bibitem{HenryPCA}S. Ma, F. Wang, L. Wei, H. Wolkowicz, Robust principal component analysis using facial reduction, Optim. Eng., 21(3) (2020) 1195-1219.
\bibitem{micro1}I. McCowan, M. Lincoln, I. Himawan, Microphone Array Shape Calibration in Diffuse Noise Fields, IEEE Trans. on Audio, Speech, and Language Processing, 16(3) (2008) 666-670.%, March , doi: 10.1109/TASL.2007.911428.
%\bibitem{GTRS}J. J. More, Generalization of the trust region problem, Optimization Methods and Software, 2(189–209), 1993.
%N

%O
%\bibitem{ML-SRLS}P. O\v{g}uz-Ekim, J. P. Gomes, J. Xavier, M. Sto\v{s}i\'{c}, and P. Oliveira, An angular approach for range-based approximate maximum likelihood source localization through convex relaxation. IEEE Transactions on Wireless Communications, 13(7), 3951-3964, 2014.
%P
%\bibitem{Pataki}G. Pataki, Strong duality in conic linear programming: facial reduction and extended duals, Computational and Analytical Mathematics, (2013) 613-634.
%\bibitem{Permenter2018}F. Permenter, and P. Parrilo, Partial facial reduction: simplified, equivalent SDPs via approximations of the the PSD cone, Mathematical Programming, 171(1), 2018, 1-54.
%\bibitem{GTRS}T. K. Pong, H. Wolkowicz, The generalized trust region subproblem, Comput. Optim. Appl., 58(2) (2014) 273–322.
%Q
%\bibitem{qisun}H. D. Qi, D. Sun, A quadratically convergent newton method for the nearest correlation matrix problem, SIAM Journal on Matrix Analysis and Applications, 28(2), 2006.
\bibitem{semi}H. D. Qi, A semismooth Newton method for the nearest Euclidean distance matrix problem,  SIAM J. Matrix Anal. Appl., 34(1) (2013) 67-93.
\bibitem{qiLAG}H. D. Qi, N. Xiu, X. Yuan, A Lagrangian Dual Approach to the Single-Source Localization Problem, IEEE Tran. Signal Processing, 61(15) (2013) 3815-3826.%Aug.1, doi: 10.1109/TSP.2013.2264814.
\bibitem{EMBED}H. D. Qi, X. Yuan, Computing the nearest Euclidean distance matrix with low embedding dimensions, Math. Program., 147(1-2) (2014) 351-389.
%\bibitem{cqsdp}H. D. Qi, Conditional Quadratic Semidefinite Programming: Examples and Methods, Journal of the Operations Research Society of China, 2(2), 2014, 143-170.% https://doi.org/10.1007/s40305-014-0048-9

%R
\bibitem{SDP1}M. V. Ramana, An exact duality theory for semidefinite programming and its complexity implications, Math. Program., 77(2) (1997) 129-162.
\bibitem{emergency}J. D. Reed, R. M. Buehrer, C. R. C. M. da Silva, An Optimization Approach to Single-Source Localization Using Direction and Range Estimates, GLOBECOM 2009 - 2009 IEEE Global Telecommunications Conference, 2009.%, doi: 10.1109/GLOCOM.2009.5425261.
\bibitem{Rockafellar}R. T. Rockafellar, Convex Analysis, Princeton University Press, Princeton, 1970.

%S
%\bibitem{Schoenberg}I. J. Schoenberg, Remarks to Maurice Frechet's Article``Sur La Definition Axiomatique D'Une Classe D'Espace Distances Vectoriellement Applicable Sur L'Espace De Hilbert, Annals of Mathematics, 1935, 724-732.
\bibitem{JDJ}{I. J. Schoenberg, Remarks to Maurice Fr$\acute{e}$chet's article "Sur la d$\acute{e}$finition axiomatique d'une classed'espaces distanci$\acute{e}$s vectoriellement applicable sur l'espace de Hilbert", Ann. Math., 36(3) (1935) 724-732.}
\bibitem{psd}S. Sremac, F. Wang, H. Wolkowicz, L. Pettersson, Noisy euclidean distance matrix completion with a single missing node, J. Glob. Optim., 75(4) (2019) 973-1002.
\bibitem{sedumi}J. F. Sturm, Using SeDuMi 1.02, a MATLAB toolbox for optimization over symmetric cones, Optim. Method. Softw., 11 (1999) 625-653.
\bibitem{So2007}A. M. C. So, Y. Ye, Theory of semidefinite programming for sensor network localization, Math. Program., 109(2) (2007) 367-384.
\bibitem{sunCN}D. Sun, The strong second-order sufficient condition and constraint nondegeneracy in nonlinear semidefinite programming and their implications, Math. Oper. Res., 31(4) (2006) 761-776.
\bibitem{SDPNAL+}D. Sun, K. C. Toh, Y. Yuan, X. Y. Zhao, SDPNAL+: A Matlab software for semidefinite programming with bound constraints (version 1.0), Optim. Method. Softw., 35(1) (2020) 87-115. %DOI: 10.1080/10556788.2019.1576176
%T
\bibitem{Tarazaga1996}P. Tarazaga, T. L. Hayden, J. Wells, Circum-Euclidean distance matrices and faces, Linear Algebra Appl., 232 (1996) 77-96.
\bibitem{Tarazaga}P. Tarazaga, Faces of the cone of Euclidean distance matrices: Characterizations, structure and induced geometry, Linear Algebra Appl., 408 (2005) 1-13.
\bibitem{SDPT3}T\"{u}t\"{u}nc\"{u}, K. C. Toh, M. J. Todd, Solving semidefinite-quadratic-linear programs using sdpt3, Math. Program., 95(2) (2003) 189-217.
%U

%V
\bibitem{Vaghefi}R. M. Vaghefi, J. Schloemann, R. M. Buehrer, NLOS mitigation in TOA-based localization using semidefinite programming, In: Positioning Navigation and Communication (WPNC), {(2013) 1-6.}
%W
%\bibitem{Waki}H. Waki and M. Muramatsu, Facial reduction algorithms for conic optimization problems, Journal of Optimization Theory and Applications, 158(1) (2013) 188-215.

%X

%Y
\bibitem{Kn+}{G. Young, A. S. Householder, Discussion of a set of points in terms of their mutual distances, Psychometrika, 3(1) (1938) 19-22}

%Z
%\bibitem{Zhao1998}Q. Zhao, S. E. Karisch, F. Rendl, and H. Wolkowicz, Semidenite programming relaxations for the quadratic assignment problem, Journal of Combinatorial Optimization, 2(1) (1998) 71-109.
\bibitem{qi2018}S. Zhou, N. Xiu, H. D. Qi, A fast matrix majorization-projection method for penalized stress minimization with box constraints,  IEEE Trans. Signal Process., 66(16) (2018) 4331-4346.





%\bibitem{danbaizhi1}T. F. Havel and K. Wuthrich, “An Evaluation of the Combined Use of ¨ Nuclear Magnetic Resonance and Distance Geometry for the Determination of Protein Conformations in Solution,” J. Mol. Biol., vol. 182, no. 2, pp. 281–294, 1985.
%\bibitem{zhai}Zhai F, Li Q. A Euclidean distance matrix model for protein molecular conformation[J]. Journal of Global Optimization, 2020, 76(4): 709-728.




%\bibitem{Henry2010}N. Krislock and H. Wolkowicz, “Euclidean distance matrices and applications,” in Handbook of Semidefinite, Cone and Polynomial Optimization, M. Anjos and J. Lasserre, Eds. New York, NY, USA: Springer, 2010, pp. 879–914.



%\bibitem{Glunt}Glunt, W., Hayden, T. L., Hong, S., \& Wells, J. (1990). An Alternating Projection Algorithm for Computing the Nearest Euclidean Distance Matrix. SIAM Journal on Matrix Analysis and Applications. 11(4), 589-600.



%\bibitem{Clarke}F. H. Clarke, Optimization and Nonsmooth Analysis, John Wiley \& Sons, New York, 1983.
%\bibitem{gale}Gale D. Neighboring vertices on a convex polyhedron[J]. Linear inequalities and related system, 1956, 38: 255-263.

\end{thebibliography}
\end{document}